\documentclass[11pt,reqno]{amsart}
\usepackage{microtype}
\usepackage[margin=3cm]{geometry}

\usepackage[shortlabels]{enumitem}
\setlist[enumerate]{label={(\arabic*)}}

\usepackage{amssymb}
\usepackage[dvipsnames]{xcolor}
\usepackage
[colorlinks=true,linkcolor=Maroon,citecolor=OliveGreen]
{hyperref}
\usepackage[abbrev, alphabetic]{amsrefs}  
\usepackage[capitalize]{cleveref} 
\crefname{equation}{}{}

\usepackage[norefs, nocites]{refcheck}

\makeatletter
\newcommand{\refcheckize}[1]{%
  \expandafter\let\csname @@\string#1\endcsname#1%
  \expandafter\DeclareRobustCommand\csname relax\string#1\endcsname[1]{%
    \csname @@\string#1\endcsname{##1}\wrtusdrf{##1}}%
  \expandafter\let\expandafter#1\csname relax\string#1\endcsname
}
\makeatother

\refcheckize{\cref}
\refcheckize{\Cref}


\numberwithin{equation}{section}

\newtheorem{lemma}{Lemma}[section]
\newtheorem{theorem}[lemma]{Theorem}
\newtheorem{proposition}[lemma]{Proposition}
\newtheorem{corollary}[lemma]{Corollary}

\theoremstyle{definition}
\newtheorem{remark}[lemma]{Remark}
\newtheorem{notation}[lemma]{Notation}
\newtheorem*{choice}{Choice of $A$}


\newcommand\opr[1]{\operatorname{#1}}

\newcommand{\eps}{\epsilon}


\def\Zint{\mathbf{Z}}
\def\F{\mathbf{F}}

\def\Z{\mathrm{Z}}
\def\fpr{\mathrm{fpr}}
\def\fp{\mathrm{fp}}

\newcommand\floor[1]{\left\lfloor{#1}\right\rfloor}

\newcommand{\gen}[1]{\langle{#1}\rangle}

\newcommand{\GL}{\opr{GL}}
\newcommand{\SL}{\opr{SL}}

\newcommand{\SO}{\opr{SO}}
\newcommand{\SU}{\opr{SU}}
\newcommand{\GU}{\opr{GU}}
\newcommand{\Or}{\opr{O}}
\newcommand{\PGL}{\opr{PGL}}

\newcommand{\Sp}{\opr{Sp}}
\newcommand{\POm}{\opr{P\Omega}}
\newcommand{\PO}{\opr{PO}}
\newcommand{\PSL}{\opr{PSL}}
\newcommand{\PSU}{\opr{PSU}}
\newcommand{\PSp}{\opr{PSp}}
\newcommand{\PSO}{\opr{PSO}}
\newcommand{\PGamL}{\opr{P\Gamma L}}
\newcommand{\GamL}{\opr{\Gamma L}}

\newcommand{\Aut}{\opr{Aut}}

\def\sm{\smallsetminus}

\def\dim{\opr{dim}}

\usepackage{todonotes}

\begin{document}

\title[Probabilistic Generation]{Probabilistic Generation of Finite Almost Simple Groups}

\author[J. Fulman]{Jason Fulman}
\address{Department of Mathematics, University of Southern California, Los Angeles, CA 90089-2532, USA}
\email{fulman@usc.edu}

\author[D. Garzoni]{Daniele Garzoni} 
\address{Department of Mathematics, University of Southern California, Los Angeles, CA 90089-2532, USA}
\email{garzoni@usc.edu}

\author[R. M. Guralnick]{Robert M. Guralnick}
\address{Department of Mathematics, University of Southern California, Los Angeles, CA 90089-2532, USA}
\email{guralnic@usc.edu}

\thanks{Fulman was partially supported by Simons Foundation Grants 400528 and 917224. Guralnick was supported
by the NSF grant DMS-1901595 and a Simons Foundation Fellowship 609771. }

\keywords{generation of almost simple groups, probabilistic generation}

\subjclass[2020]{Primary 20P05; Secondary 20D06, 20F69, 20G40, 20B15}

\ \date{March 25, 2024}

\begin{abstract}
We prove that if $G$ is a sufficiently large finite almost simple group of Lie type, then given a fixed nontrivial element $x \in G$
and a coset of $G$ modulo its socle, the probability that $x$ and a random element of the coset generate a subgroup containing
the socle is uniformly bounded away from $0$ (and goes to $1$ if the field size goes to $\infty$). This is new even if $G$ is simple.   Together with results of Lucchini and Burness--Guralnick--Harper, this proves a conjecture of Lucchini and has an application to profinite groups. A key step in the proof is the determination of the limits for the proportion of elements in a classical group which fix no subspace of any bounded dimension.
\end{abstract}

\maketitle

\section{Introduction}

The problem of determining the smallest cardinality of a generating set for finite groups has a very long history, and has had various applications -- e.g.,  Thompson proved that a finite group is solvable if and only if every two generated
subgroup is solvable.   The critical case 
for handling these problems is when the group is simple or almost simple. (Recall that a finite group $G$ is \textit{almost simple} if there exists a nonabelian finite simple group $S$ with $S\le G \le \Aut(S)$.) Combining the classification of simple groups together
with Steinberg's result \cite{St} shows that every finite simple group can be generated by two elements.  

In recent  years, probabilistic methods have been introduced.  In particular, it was shown in  \cite{Di,KL,LSh} that the probability that two random elements of a finite simple group generate the group goes to $1$ as the  order of the group goes to infinity.

A finite group is said to be $\frac{3}{2}$-generated if every nontrivial element belongs to a generating pair. Already in \cite{St} it was asked whether every finite simple group is $\frac{3}{2}$-generated.  This (and more) was 
proved in \cite{guralnick2000kantor, breuer2008probabilistic}. Recently Burness, Guralnick and Harper \cite{BGH} showed that a finite group is $\frac{3}{2}$-generated if and only if every proper quotient is cyclic.

 Kantor and Lubotzky \cite{KL} asked whether there was a probabilistic version of $\frac{3}{2}$-generation. This is not the case for alternating groups: If $x\in A_n$ moves only a bounded number of points, it is clear that the probability that $x$ and a random element of $A_n$ generate
 a transitive group goes to $0$ as $n \rightarrow \infty$.
 
 In this paper, we prove that, on the contrary, a probabilistic version holds for simple groups of Lie type. More generally, we consider almost simple groups of Lie type, by fixing any nontrivial element and picking random elements from a fixed coset of the socle. Together with \cite{BGH2,Lu} this solves a conjecture of Lucchini and has an application to profinite groups; see \Cref{cor:lucchini,cor:profinite}.

\begin{theorem}
\label{t:main}
    There exists an absolute constant $\eps>0$ such that the following holds. Let $S$ be a finite simple group of Lie type of large enough order, and let $x,y\in \Aut(S)$ with $x\neq 1$. Then the probability that $x$ and a random element of $Sy$ generate $\gen{S,x,y}$ is at least $\eps$.
\end{theorem}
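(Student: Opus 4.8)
The strategy is a "fixed-element + random-element" generation argument, which reduces—via Burness–Guralnick–Harper–style machinery—to counting elements that lie in no maximal overgroup, combined with the new asymptotic estimates on subspace-fixing proportions promised in the abstract. Let me sketch the steps.

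First, I would fix the nontrivial element $x \in \Aut(S)$ and the coset $Sy$, and set $H = \gen{S, x, y}$, an almost simple group with socle $S$. I want to lower-bound the proportion of $z \in Sy$ such that $\gen{x, z} = H$. The standard approach: $\gen{x,z} \neq H$ iff $x$ and $z$ both lie in some maximal subgroup $M$ of $H$ not containing $S$ (a "supplement"). So I would write
\[
\Prob_{z \in Sy}\big(\gen{x,z} \neq H\big) \le \sum_{M} \Prob_{z \in Sy}(z \in M),
\]
the sum over $H$-classes of maximal subgroups $M$ with $x \in M$ and $M \cap Sy \neq \emptyset$. Each term is $|M \cap Sy|/|S|$ times the number of conjugates, i.e. roughly $\fpr(x, H/M)$-type quantities, so the whole thing is controlled by $\sum_M \fpr(x, \Omega_M)$ where $\Omega_M = H/M$. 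The goal is to show this sum is at most $1 - \eps$.

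Next comes the case division by the type of maximal subgroup $M$, following Aschbacher's classes for classical groups (and the analogous analysis for exceptional groups). The "generic" and most dangerous case is when $M$ is a subspace stabilizer (class $\mathcal{C}_1$): here $\fpr(x, \Omega_M)$ need not be small for a general $x$ (e.g. $x$ a transvection), so one cannot win termwise. This is exactly where the key input from the abstract enters: one needs that a random $z \in Sy$ fixes \emph{no} subspace of bounded dimension with probability bounded away from $0$ (and $\to 1$ as $q \to \infty$). Concretely, even if $x$ fixes some low-dimensional subspace, a random $z$ typically does not share a fixed subspace of the relevant dimension with $x$, so the union over $\mathcal{C}_1$-subgroups containing $x$ still has small measure in $Sy$. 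For the remaining Aschbacher classes and for the $\mathcal{S}$ (almost simple) subgroups, fixed-point-ratio bounds of Liebeck–Shalev / Burness type give $\fpr(x,\Omega_M) \le |x^H|^{-\eta}$ or $\le q^{-c\dim}$, and the number of classes of such $M$ grows only polynomially (or the indices are large enough), so those contributions sum to $o(1)$; one must be a little careful when $x$ itself is a very small element (a transvection, or an element of small support), handling those by the subspace-counting result again.

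The main obstacle, as flagged, is the $\mathcal{C}_1$/reducible case and producing the sharp limiting constants: one must understand, uniformly over classical families and over all cosets $Sy$ (including graph and field automorphism cosets, where $z$ acts semilinearly or with a twist), the proportion of elements fixing no subspace of dimension $\le k$ for each fixed $k$. This requires generating-function / cycle-index techniques for the various classical groups and their coset analogues, plus care with small rank and small $q$ (where "large enough order" lets us discard finitely many exceptions, but one still wants explicit control). Secondary obstacles: treating $x$ a field or graph automorphism (so $x \notin \Inndiag(S)$) uniformly, and ensuring the exceptional groups of Lie type are covered—there the rank is bounded so fixed-point-ratio estimates in terms of $q$ suffice and the analysis is comparatively soft. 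Finally, I would assemble the bound: choose the threshold dimension $k$ so that the subspace-free proportion exceeds $1 - \eps/2$, absorb all non-$\mathcal{C}_1$ classes into another $\eps/2$ using polynomial-count-times-exponential-decay, and conclude $\Prob_{z}(\gen{x,z} = H) \ge \eps$ for $|S|$ large, with the probability tending to $1$ as $q \to \infty$.
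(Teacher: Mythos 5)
Your overall frame---union bound over classes of maximal overgroups, fixed-point-ratio estimates plus a polynomial count of classes for the non-subspace subgroups, bounded-rank groups handled by $q\to\infty$ estimates, and a special treatment of subspace stabilizers using the proportion of elements fixing no subspace of bounded dimension---matches the paper's. But there is a genuine gap in how you close the argument for small $q$, which is exactly the case the paper identifies as the heart of the matter. You propose to ``choose the threshold dimension $k$ so that the subspace-free proportion exceeds $1-\eps/2$''. For fixed $q$ (and $q\in\{2,3\}$ is what remains after the Liebeck--Saxl/Liebeck--Shalev reductions) this is impossible: the limiting proportion of elements of a coset of the socle fixing no subspace of dimension at most $t$ is uniformly bounded away from $1$ (see \Cref{boun}(2), \Cref{sympodd}, \Cref{sympeven}), and increasing $t$ only \emph{decreases} it. So the subspace-free set $A$ has measure bounded away from $0$ but not close to $1$, and your final ``absorb into $\eps/2$'' assembly cannot work as stated.

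Relatedly, you never give the mechanism by which restricting to subspace-free $z$ controls the contribution of $k$-subspace stabilizers with $k$ above the threshold. The unconditioned union bound over all subspace classes is roughly $\sum_{k\ge 1} 2/q^k = 2/(q-1)$, which already exceeds $1$ for $q=2$, so conditioning on $A$ is not optional, and one needs a quantitative statement about the \emph{conditional} measure of each subspace event. The paper's key tool is \Cref{l:expectation}: for an $S$-stable $A\subseteq Sy$, the probability that $x$ and a random element of $A$ lie in a common conjugate of $M$ is at most $\fpr(x,G/M)$ times the expected number of fixed points of an element of $A$ on $G/M$. For $k\le t$ this expectation vanishes by construction of $A$; for $k>t$ one must prove it stays bounded (essentially equal to the limiting proportions $a_k<1$), which is the content of \Cref{l:expectation_eigenvalue_free,l:expectation_orthogonal_groups,l:expectation_graph} and requires the coset versions of the Neumann--Praeger limits (including the inverse-transpose coset via Fulman--Guralnick, the $\Omega$ versus $\SO$ cosets for $q$ odd, and an improved fixed-point-ratio bound for $1$-flags and $1$-antiflags). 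Your heuristic that ``a random $z$ typically does not share a fixed subspace with $x$'' gestures at this, but without the expectation lemma and the limit computations the subspace sum cannot be closed for $q=2,3$; note also that for some cosets certain terms (e.g.\ the $1$-antiflag or nondegenerate $1$-space contributions) cannot be cancelled at all and must be retained and bounded separately, which is why the paper's final bound for small $q$ is of the form $P_2<0.91$ rather than $o(1)$.
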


If $S$ is defined over $\F_q$ with $q$ large, and if $x,y\in S$, then \Cref{t:main} was proved in Liebeck--Shalev \cite{liebeck1999shalev} and Guralnick--Kantor--Saxl \cite{guralnick1994probability} (and the probability tends to $1$ as $q\to \infty$). For $q$ large, we extend this result to the case of arbitrary $x$ and $y$. We stress, however, that the key contribution of this paper is the case of classical groups over small fields (which is the most difficult case), also for $x,y\in S$; see \Cref{subsec:idea_proof} for more details. It is known that, for bounded $q$, the probability in \Cref{t:main} can be bounded away from $1$, so that \Cref{t:main} cannot be improved.

The fact that  $\gen{S,x,y}$ is $2$-generated was proved by Dalla Volta--Lucchini \cite{dallavolta1995lucchini}. A variant of the proof of the $\frac{3}{2}$-generation result of \cite{BGH} gives that if $S$ is a nonabelian simple group and $x, y \in \Aut(S)$  with $x \ne 1$, then there exists $s \in S$ such that $\gen{x,sy}=\gen{S,x,y}$; see \cite{BGH2}. In particular, this  proves \Cref{t:main} for sporadic groups and for the finitely many groups of Lie type excluded from the statement.  

As already remarked, the probabilistic statement fails for alternating groups. Nevertheless, Lucchini \cite{Lu} proved that if $x,y\in S_n$ with $x\neq 1$, then the probability that $x$ and a random element of $A_ny$ generate a nonsolvable group is bounded away from zero. In particular, we deduce from \cite{Lu} (alternating groups), \Cref{t:main} (groups of Lie type of large enough order) and \cite{BGH2} (sporadic groups and remaining groups of Lie type) the following, which was conjectured by Lucchini in \cite[Conjecture 5]{Lu}.

\begin{corollary}
	\label{cor:soluble}
	\label{cor:lucchini} \label{c:solvable}  There exists an absolute constant $\eps>0$ such that the following holds. Let $S$ be a nonabelian finite simple group  and let $x,y\in \Aut(S)$ with $x\neq 1$. Then the probability that $x$ and a random element of $Sy$ generate a nonsolvable group is at least $\eps$. 
\end{corollary}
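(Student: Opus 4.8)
The plan is to deduce this immediately by case analysis on the isomorphism type of $S$, combining three inputs: Lucchini's theorem for alternating groups, \Cref{t:main} for groups of Lie type of large enough order, and the Burness--Guralnick--Harper-type result from \cite{BGH2} for sporadic groups and the finitely many small groups of Lie type. The key observation linking \Cref{t:main} to the statement here is that if $x$ and a random element $z\in Sy$ generate $\gen{S,x,y}$, then since $S$ is a nonabelian simple group, the subgroup $\gen{S,x,y}$ contains the nonabelian simple group $S$ and hence is nonsolvable; so the event "$\gen{x,z}\supseteq S$" is contained in the event "$\gen{x,z}$ is nonsolvable", and the probability of the latter is at least the probability of the former.

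Concretely, first I would dispose of the case where $S$ is of Lie type and $|S|$ is large enough: here \Cref{t:main} gives an absolute $\eps>0$ such that with probability at least $\eps$, a random $z\in Sy$ satisfies $\gen{x,z}=\gen{S,x,y}$, and by the observation above this forces $\gen{x,z}$ nonsolvable. Second, for $S$ alternating, I would invoke Lucchini \cite{Lu}: for $x,y\in S_n$ with $x\neq 1$, the probability that $x$ and a random element of $A_ny$ generate a nonsolvable group is bounded away from $0$; one needs to note that every automorphism of $A_n$ ($n\neq 6$) is realized by conjugation inside $S_n$, and the group $A_6$ together with its (finitely many) outer automorphisms can be absorbed into the finite list handled by the third input. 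Third, for $S$ sporadic, or $S$ of Lie type with $|S|$ below the threshold of \Cref{t:main}, I would use \cite{BGH2}: there exists $s\in S$ with $\gen{x,sy}=\gen{S,x,y}\supseteq S$, and since there are only finitely many such $S$, the single element $s$ witnesses a probability at least $1/|S|$, which is bounded below by an absolute constant over this finite set.

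Finally I would take $\eps$ to be the minimum of the three absolute constants produced above, which is still a positive absolute constant, completing the proof. The only mild subtlety --- not really an obstacle --- is bookkeeping: making sure that the three cases genuinely cover all pairs $(S,x,y)$ with $S$ nonabelian simple and $x\neq 1$, in particular that the "large enough order" threshold in \Cref{t:main} is matched by the finite list covered in \cite{BGH2}, and that outer automorphisms of alternating groups are correctly accounted for. Since \Cref{t:main} and \cite{Lu} each already supply an \emph{absolute} constant and the remaining groups form a \emph{finite} set, no quantitative work beyond taking a minimum is needed.
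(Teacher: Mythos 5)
Your proposal is correct and follows essentially the same route as the paper, which deduces the corollary directly from Lucchini's result for alternating groups, \Cref{t:main} for groups of Lie type of large enough order, and \cite{BGH2} for sporadic groups and the finitely many remaining groups of Lie type (where a single generating pair gives probability at least $1/|S|$, bounded below over a finite list). Your added observations --- that generation of $\gen{S,x,y}$ implies nonsolvability since it contains $S$, and the bookkeeping for $\Aut(A_n)$ and $A_6$ --- are exactly the implicit steps in the paper's deduction.
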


In \cite[Theorem 4]{Lu} it was shown  that \Cref{cor:lucchini} implies the following result on profinite groups, stating that for an element $g\in G$, $g$ and a random element have a chance to generate a prosolvable group if and only if $g$ centralizes almost all nonabelian chief factors of $G$. (Here the probability on $G$ is the Haar measure.)

\begin{corollary}
	\label{cor:profinite}
Let $G$ be a profinite group and let $g\in G$. Then, the following are equivalent:
\begin{itemize}
	\item[(i)] The probability that $g$ and a random element of $G$ generate a prosolvable group is positive;
	\item[(ii)] There exists $C\ge 1$ such that $g$ centralizes all but at most $C$ nonabelian chief factors of $G/N$ for every every open normal subgroup $N$ of $G$.
\end{itemize}
\end{corollary}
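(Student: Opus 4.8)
The plan is to deduce this from Corollary~\ref{cor:lucchini} via an inverse-limit argument, exactly along the lines of \cite[Theorem 4]{Lu}; since the excerpt instructs us to assume earlier results, I treat Corollary~\ref{cor:lucchini} as given and explain how it yields the profinite statement. The implication (ii)$\Rightarrow$(i) is the easier direction. Assume $g$ centralizes all but at most $C$ nonabelian chief factors of every finite quotient $G/N$. Fix an open normal $N$ and a chief series of $G/N$; the nonabelian chief factors not centralized by $g$ contribute, via Corollary~\ref{cor:lucchini} applied in each such factor together with the standard fact that generation probabilities multiply through a chief series (using that an element projecting onto a generating pair in every factor generates), a lower bound of the form $\eps^{C}$ on the probability that $g$ and a random element of $G/N$ generate a group that is solvable-by-(the abelian part), hence whose image is prosolvable after passing to the limit. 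Taking the inverse limit over $N$ and using that the set of pairs generating a prosolvable closed subgroup is closed (an inverse limit of the corresponding closed sets), the probability that $g$ and a Haar-random element generate a prosolvable subgroup is at least $\eps^{C}>0$.

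For the converse (i)$\Rightarrow$(ii), I argue by contraposition: suppose for every $C$ there is an open normal $N_C$ such that $g$ fails to centralize more than $C$ nonabelian chief factors of $G/N_C$. In each such factor in which $g$ acts nontrivially, the probability that $g$ and a random element fail to generate a nonsolvable subgroup of that factor is, by the complementary form of Corollary~\ref{cor:lucchini}, at most $1-\eps$; and for $g$ together with a random element to generate a prosolvable subgroup of $G/N_C$, this failure must occur in \emph{every} nonabelian chief factor in a chief series, in particular in the $>C$ factors where $g$ is nontrivial. Since these events are independent across the factors of a chief series (conditioning successively down the series), the probability is at most $(1-\eps)^{C}$. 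Letting $C\to\infty$ forces the probability in $G$ itself (which is bounded above by $(1-\eps)^C$ for all $C$) to be $0$, contradicting (i).

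The main technical point to be careful about is the independence/multiplicativity of the generation events through a chief series: one must justify that if $1=G_0\nsgp G_1\nsgp\cdots\nsgp G_k=G/N$ is a chief series, then conditioned on the images of $(g,h)$ in $G/N_i$ having any fixed value, the image of a Haar-random $h$ in the next factor $G_{i+1}/G_i$ is uniform, and that generating each successive factor (together with the already-chosen data) is what is needed — this is the content of Gaschütz's lemma and its probabilistic refinement, and it is exactly where one invokes that $g$ must generate nonsolvably in each of the $>C$ nonabelian factors. The other point requiring care is the topological step: that ``$g$ and a random element generate a prosolvable subgroup'' has well-defined positive probability iff the relevant set is measurable, which follows because it is an intersection over open $N$ of clopen sets. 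With these in place, the corollary follows purely formally from Corollary~\ref{cor:lucchini}; I expect no genuine obstacle beyond bookkeeping, since the hard mathematics is entirely in Theorem~\ref{t:main}.
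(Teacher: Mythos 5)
Your proposal does not match the paper's own treatment: the paper does not prove this corollary at all, but simply cites \cite[Theorem 4]{Lu}, where Lucchini shows that \Cref{cor:lucchini} implies the stated equivalence. Reconstructing that argument is a legitimate thing to attempt, but as written your sketch has two genuine gaps. First, your direction (ii)$\Rightarrow$(i) is logically backwards: \Cref{cor:lucchini} gives a \emph{lower} bound $\eps$ on the probability of generating a \emph{nonsolvable} group, so it cannot produce a lower bound of the form $\eps^{C}$ for the probability of prosolvable generation, and ``generation probabilities multiply through a chief series'' (a Gasch\"utz-type statement about lifting generating tuples) is not relevant to solvability of $\gen{g,h}$. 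The actual content of this direction is structural, not probabilistic: since the number of non-centralized nonabelian chief factors of $G/N$ is monotone in $N$ and bounded by $C$, it stabilizes below some open normal $N_0$, and one must then exploit facts such as ``the elements centralizing every nonabelian chief factor of a finite group form the solvable radical'' to exhibit a positive-measure set of $h$ (e.g.\ inside a suitable open coset) for which $\gen{g,h}$ has solvable image in \emph{every} $G/N$, uniformly in $N$. Nothing in your sketch addresses this uniformity, which is the whole difficulty in the profinite setting (in a finite group the statement is trivial because $h=1$ has positive probability).

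Second, in the direction (i)$\Rightarrow$(ii) the bound $(1-\eps)^{C}$ is the right shape, but the step you label ``independence across the factors of a chief series'' is exactly what needs proof and is not supplied by Gasch\"utz's lemma. What is needed is a conditional estimate: if $X=N_{j+1}/N_j$ is a nonabelian chief factor on which $g$ acts nontrivially, then for \emph{every} coset $hX$ the probability over $x\in X$ that $\gen{\bar g, hx}$ still has solvable image is at most $1-\eps'$. Here $X\cong S^r$ with $g$ possibly permuting the $r$ simple factors, and the randomness is a uniform element of a coset of $S^r$ inside an arbitrary finite group, whereas \Cref{cor:lucchini} concerns $x,y\in\Aut(S)$ and a random element of the single coset $Sy$. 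Bridging these two situations (choosing an almost simple section $N(S_1)/C(S_1)$, checking that the relevant elements normalize $S_1$ or passing to suitable powers/words, and verifying that varying one $S$-coordinate of $x$ produces a uniform element of a coset of $S$ in that almost simple group) is the genuine technical core of Lucchini's Theorem 4, and your proposal treats it as bookkeeping. Your topological remarks (the prosolvability set is a countable intersection of clopen sets, so measurable, with measure the limit of the finite-level probabilities) are fine, but they are the easy part.
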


One might ask whether the probability of generating a nonsolvable group goes to $1$ as the order of $S$ goes to $\infty$ (cf. \Cref{cor:soluble}). This is not needed for the application to profinite groups (\Cref{cor:profinite}), but is an interesting problem which we plan to address in future work.

A key ingredient in the proof of \Cref{t:main} is a result of independent interest.  We consider the proportion of elements in a classical group of dimension $n$ over the field of size $q$ which fix no subspace
of dimension at most $t$.   For $q$ and $t$ fixed, we prove that the limit as $n \rightarrow \infty$ exists and is strictly between $0$ and $1$.  We elaborate on this in the next subsection.

\subsection{Idea of the proof}
\label{subsec:idea_proof}

Let us explain the method of proof of \Cref{t:main}. Let $M$ be a maximal subgroup of $\gen{S,x,y}$. We will see (\Cref{l:fixed_point_ratio}) that the probability that $x$ and a random element of $Sy$ are contained in a conjugate of $M$ is at most the fixed point ratio of $x$ on the cosets of $M$. Using fixed point ratio estimates from Guralnick--Kantor \cite{guralnick2000kantor}, Liebeck--Saxl \cite{liebeck1991saxl} and Liebeck--Shalev \cite{liebeck1999shalev}, this leaves us with classical groups over fields of size $2$ and $3$ and $M$ a subspace subgroup.

We remark at once that this is not a critical advancement, in that handling small fields is the most difficult case, and solving the problem for $\SL_n(2)$, say, is as difficult as solving the problem for $\PSL_n(q)$ for every $q$. 

Indeed, the remaining cases require much more work, and we make use of a new method in order to address them. Specifically, we pick random elements from a certain subset $A$ of $Sy$, and we make use of a key lemma (\Cref{l:expectation}), asserting that we can multiply $\fpr(x,G/M)$ by the expected number of fixed points of an element of $A$ on $G/M$, namely, by
\begin{equation}
\label{eq:average_intro}
\frac{1}{|A|}\sum_{g\in A}\fp(g,G/M).
\end{equation}
(Note that the aforementioned \Cref{l:fixed_point_ratio} is just a special case, since the expected number of fixed points of an element of $Sy$ is $1$.) The success of this strategy depends on being able to prove two facts:
\begin{itemize}
    \item[(i)] $|A|/|S|$ is bounded away from zero.
    \item[(ii)] The expectation \eqref{eq:average_intro} is small.
\end{itemize}

Let $A=A(t)$ be the set of elements of $Sy$ fixing no space of dimension at most $t$ (with suitable necessary changes for some choices of $Sy$). In order to prove (i) we will use generating functions. We will prove that the limit as $n\to \infty$ with $q$ and $t$ fixed of $|A|/|S|$ exists and is between $0$ and $1$. This generalizes work of Neumann--Praeger \cite{NP}, which addressed the case $t=1$. 

We will then prove (ii). Evidently, when $M$ is a stabilizer of a space of dimension $k\le t$, the average \eqref{eq:average_intro} is simply zero. What is more, the average over $Sy$ is $1$, for any $k$. Given that $A$ is obtained from $Sy$ by removing elements fixing (loosely speaking) many subspaces, we expect that the average over $A$ be smaller than $1$ also in the case $k>t$.  We will prove this fact, by computing the asymptotic of \eqref{eq:average_intro} in the regime $q$ and $t$ fixed and $n\to \infty$. The proof depends essentially on the existence of the limits discussed in the previous paragraph, as well as on some of their properties.

We conclude with a comment. In order to make the proof explicit, we will choose an explicit value for $t$ (and $t\le 4$ in all cases). However, it is worth explaining the proof from a somewhat more satisfying perspective. By \cite{guralnick2000kantor},
\begin{equation}
	\label{eq:intro_kantor}
\sum_M \fpr(x,G/M) \ll \sum_{k=1}^\infty \frac{1}{q^k},
\end{equation}
the sum running over all conjugacy classes of subspace subgroups. By taking $t$ sufficiently large but fixed, and using \Cref{l:expectation} and (i) and (ii), we can just cancel all terms $1/q^k$ with $k$ as large as we want, and since $\sum_k 1/q^k$ is a convergent series, we can make the right-hand side of \eqref{eq:intro_kantor} as small as we want. (One has to be slightly careful because there are cases where some contributions $1/q$ or $1/q^2$ simply cannot be cancelled, due to the nature of the coset $Sy$.)

\subsection{Notation} If $G$ is a group acting on a finite set $\Omega$ and $g\in G$, we denote by $\fp(g,\Omega)$ the number of fixed points of $g$ on $\Omega$, and we denote by $\fpr(g,\Omega)$ the \textit{fixed point ratio} $\fp(g,\Omega)/|\Omega|$. If $H$ is a subgroup of $G$ we denote by $G/H$ the set of right cosets of $H$ in $G$.

We write $f\ll g$ if there exists an absolute constant $C>0$ such that $f \le Cg$. If $C$ depends on a parameter, say $r$, then we write $f\ll_r g$.

We write $(n,m)$ for the greatest common divisors of the positive integers $n$ and $m$.

\section{Generating functions}
\label{sec:generating_functions}

Neumann and Praeger \cite{NP} studied the proportion of eigenvalue free elements in a finite classical group $G$. We extend this in two
directions. First, we consider proportions of elements in cosets of $\SL_n(q)$ in $\GL_n(q)$ and find that the limiting proportions are the same
as for $\GL_n(q)$. Although we do not need it, the same is true for proportions of elements in cosets of $\SU_n(q)$ in $\GU_n(q)$; see the appendix. Second, for linear, unitary, symplectic, and orthogonal groups, we investigate proportions of elements whose characteristic polynomial has no irreducible factors of degree at most some fixed $t$ (so the case $t=1$ corresponds to eigenvalue free elements).

\subsection{Cosets of $\SL_n(q)$ in $\GL_n(q)$} \label{cosetA}

The main purpose of this section is to show that for any coset of $\SL_n(q)$ in $\GL_n(q)$, the fixed $q$, $n \rightarrow \infty$ proportion of 
elements whose characteristic polynomial has no irreducible factors of degree at most $t$ converges to the fixed $q$, $n \rightarrow \infty$ proportion of the corresponding elements of $\GL_n(q)$. From \cite{NP}, when $t=1$, the fixed $q$, $n \rightarrow \infty$ proportion of eigenvalue free elements of $\GL_n(q)$ is equal to \[ \left( \prod_{i \geq 1} (1-1/q^i) \right)^{q-1}, \] and converges to $1/e$ as $q \rightarrow \infty$. We will calculate the limiting proportion for general $t$ and will show that for $q$ a prime power, it is uniformly bounded away from $0$ and $1$.

We require some definitions. Let $\F_q$ be a finite field of size $q$. We let $\Phi_q^+$ denote the set of of monic irreducible polynomials $\phi$ in $\F_q[z]$ other than the polynomial $z$. We let $deg(\phi)$ denote the degree of $\phi$. Fix a generator $\zeta$ of the multiplicative group $\F_q^{\times}$. For $\alpha \in \F_q^{\times}$, define $r(\alpha)$ to be the element of $Z_{q-1}$ such that $\zeta^{r(\alpha)}=\alpha$. For a polynomial $g$, define $r(g)=r((-1)^{deg(g)} g(0))$. We let $\Lambda$ denote the set of partitions of all non-negative integers $n$.

The following three known lemmas will be useful.

\begin{lemma} \label{fir} Suppose that $g(u) = \sum_{n \geq 0} a_n u^n$ where $g(u) = f(u)/(1-u)$ for $|u|<1$. If $f(u)$ is analytic in the open disc of radius $R$ around the origin where $R>1$, then $lim_{n \rightarrow \infty} a_n = f(1)$.
\end{lemma}

\begin{proof} Writing $f(u)=\sum_{n \geq 0} b_n u^n$, one has that $a_n = b_0 + \cdots + b_n$, and the result follows.
\end{proof}

Lemma \ref{pole} is a basic criterion for absolute convergence which replaces products with sums.

\begin{lemma} \label{pole} The following are equivalent for a domain $D$ in the complex plane:
\begin{enumerate}
\item $\prod (1+a_n)$ converges absolutely (and uniformly over $D$)
\item $\sum |a_n|$ converges absolutely (and uniformly over $D$) 
\end{enumerate}
\end{lemma}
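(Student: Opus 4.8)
The plan is to derive everything from the elementary two-sided estimate
\[
\sum_{k=1}^{n} b_k \;\le\; \prod_{k=1}^{n}(1+b_k) \;\le\; \exp\Bigl(\sum_{k=1}^{n} b_k\Bigr),
\]
valid for nonnegative reals $b_1,\dots,b_n$ (left inequality: expand the product and discard the nonnegative higher-order terms; right inequality: $1+x\le e^x$). Recall that, by definition, $\prod(1+a_n)$ converges absolutely (uniformly on $D$) exactly when $\prod(1+|a_n|)$ converges (uniformly on $D$); since the factors $1+|a_n|$ are $\ge 1$, such a limit is automatically $\ge 1$, so the value $0$ never intervenes. Writing $P_n(z)=\prod_{k=1}^{n}(1+|a_k(z)|)$ and $S_n(z)=\sum_{k=1}^{n}|a_k(z)|$, the estimate becomes $S_n(z)\le P_n(z)\le e^{S_n(z)}$.

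First I would dispose of the pointwise equivalence: both $(P_n(z))_n$ and $(S_n(z))_n$ are nondecreasing, and the estimate shows one of them is bounded if and only if the other is; since a bounded monotone sequence converges, $\prod(1+|a_n|)$ converges at $z$ if and only if $\sum|a_n|$ converges at $z$.

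Next, for the uniform statement, I would show that $(S_n)$ is uniformly Cauchy on $D$ if and only if $(P_n)$ is. For $n<m$ one has, on all of $D$,
\[
0\;\le\; S_m-S_n \;=\;\sum_{k=n+1}^{m}|a_k|\;\le\;\prod_{k=n+1}^{m}(1+|a_k|)-1\;=\;\frac{P_m}{P_n}-1\;\le\; P_m-P_n,
\]
the last step using $P_n\ge 1$; hence $(P_n)$ uniformly Cauchy $\Rightarrow$ $(S_n)$ uniformly Cauchy. Conversely, uniform Cauchyness of $(S_n)$ yields a uniform bound $S_n\le S$ on $D$, whence $P_n\le e^{S}$ on $D$ by the right-hand estimate, and
\[
0\;\le\; P_m-P_n \;=\; P_n\Bigl(\prod_{k=n+1}^{m}(1+|a_k|)-1\Bigr)\;\le\; e^{S}\bigl(e^{\,S_m-S_n}-1\bigr),
\]
and the right-hand side tends to $0$ uniformly on $D$ as $n,m\to\infty$, since $S_m-S_n\to 0$ uniformly; so $(P_n)$ is uniformly Cauchy, hence uniformly convergent. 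This gives (2) $\Rightarrow$ (1) and completes the equivalence.

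I do not expect a genuine obstacle here — the statement is standard real analysis — but the one spot that needs care is the uniform version of (2) $\Rightarrow$ (1): one must first extract a uniform bound on the partial products $P_n$ from the uniform convergence of $\sum|a_n|$ before the increments $P_m-P_n$ can be controlled.
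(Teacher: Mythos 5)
The paper offers no proof of this lemma at all -- it is quoted as a classical criterion and is only ever applied to products whose factors are dominated on a disc $|u|\le R$ by constants with convergent sum -- so the only question is whether your argument stands on its own. The pointwise equivalence and the uniform direction (1) $\Rightarrow$ (2) are correct. The gap is exactly at the spot you flagged, and your fix does not work: uniform Cauchyness of $(S_n)$ does \emph{not} yield a uniform bound $S_n\le S$ on $D$. It controls the tails $S_m-S_n$ for $m>n\ge N$ uniformly, but the head $S_N(z)=\sum_{k\le N}|a_k(z)|$ is a finite sum of functions that need not be bounded on $D$, and nothing in hypothesis (2) forces it to be. Consequently the bound $P_n\le e^{S}$, on which your estimate of $P_m-P_n$ rests, is unavailable.

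The defect is not merely in the write-up: without some boundedness assumption the uniform implication (2) $\Rightarrow$ (1) is false as literally stated. Take $D$ the punctured unit disc, $a_1(z)=1/z$, and $a_n(z)=c_n$ positive constants with $\sum_n c_n<\infty$. Then $\sum_n|a_n|$ converges uniformly on $D$ (its tails are tails of a convergent series of constants), but the partial products $P_n(z)=(1+1/|z|)\prod_{2\le k\le n}(1+c_k)$ differ from their limit by $(1+1/|z|)\bigl(\prod_{k\ge 2}(1+c_k)-\prod_{2\le k\le n}(1+c_k)\bigr)$, which is not uniformly small because of the unbounded factor $1+1/|z|$; so the product converges absolutely pointwise but not uniformly. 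To repair the argument you need an extra hypothesis of the kind that is implicit in how the lemma is used in the paper: either the $|a_n|$ are dominated on $D$ by constants $M_n$ with $\sum_n M_n<\infty$ (Weierstrass-type domination, which holds in the application since the factors are controlled by geometric series in $1/q$ on $|u|\le R<q$), or one works on compact subsets of $D$ with continuous $a_n$, where each $S_N$ is automatically bounded. Under either assumption your Cauchy-sequence computation goes through verbatim, with $e^{\sum_n M_n}$ (resp. a bound for $e^{S}$ on the compact set) replacing the unavailable uniform $e^{S}$.
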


\begin{lemma} \label{sec} (Identity 3.4 of \cite{BR1})  Let $\omega$ be a root of $z^{q-1}-1$ other than $1$. Then
\[ \prod_{\phi \in \Phi_q^+} \left( 1 - \omega^{r(\phi)} u^{deg(\phi)} \right) = 1.\]
\end{lemma}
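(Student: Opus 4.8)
The plan is to interpret the product as (the reciprocal of) a weighted Euler product over monic irreducible polynomials, and to evaluate it by rewriting it as a sum over \emph{all} monic polynomials with nonzero constant term. The only relevant feature of the exponents $r(\phi)$ is that $g \mapsto \omega^{r(g)}$ is completely multiplicative on monic polynomials with nonzero constant term, which I would check first: the map $r\colon \F_q^\times \to Z_{q-1}$ is the discrete logarithm with respect to $\zeta$, hence a group homomorphism, and $(-1)^{\deg(gh)}(gh)(0) = \big((-1)^{\deg g}g(0)\big)\big((-1)^{\deg h}h(0)\big)$, so $r(gh) = r(g) + r(h)$ in $Z_{q-1}$ whenever $g(0),h(0)\neq 0$; in particular $\omega^{r(\phi^k)} = \omega^{k\,r(\phi)}$ for each $\phi \in \Phi_q^+$ and $k \geq 0$.

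Next, working with formal power series in $u$ — legitimate because for each $N$ only finitely many $\phi \in \Phi_q^+$ satisfy $\deg\phi \leq N$ — unique factorization of monic polynomials with nonzero constant term into elements of $\Phi_q^+$ gives
\[
\prod_{\phi \in \Phi_q^+} \frac{1}{1 - \omega^{r(\phi)} u^{\deg\phi}}
= \prod_{\phi \in \Phi_q^+} \sum_{k \geq 0} \omega^{k\,r(\phi)} u^{k\deg\phi}
= \sum_{\substack{g \text{ monic} \\ g(0)\neq 0}} \omega^{r(g)} u^{\deg g}.
\]
(If one prefers an identity of analytic functions, all the products and sums converge absolutely and uniformly on $|u| \leq \rho$ for every $\rho < 1/q$, which one can read off from \Cref{pole}.)

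It then remains to show the last sum equals $1$. The degree-$0$ term is $1$. For $n \geq 1$, group the monic $g$ of degree $n$ with $g(0)\neq 0$ according to the value $\alpha := (-1)^n g(0) \in \F_q^\times$: once $\alpha$ is fixed the constant coefficient of $g$ is determined while the other $n-1$ coefficients are arbitrary, so there are exactly $q^{n-1}$ such $g$, and each contributes $\omega^{r(\alpha)}$. Hence the coefficient of $u^n$ is $q^{n-1}\sum_{\alpha \in \F_q^\times}\omega^{r(\alpha)}$. Since $r$ maps $\F_q^\times$ bijectively onto $\{0,1,\dots,q-2\}$, this equals $q^{n-1}\sum_{j=0}^{q-2}\omega^j = q^{n-1}\cdot\frac{\omega^{q-1}-1}{\omega-1} = 0$, because $\omega^{q-1}=1$ and $\omega \neq 1$. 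So the series is identically $1$, giving $\prod_{\phi \in \Phi_q^+}(1 - \omega^{r(\phi)}u^{\deg\phi})^{-1} = 1$, and taking reciprocals proves the lemma.

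There is no serious obstacle: this is a clean application of unique factorization together with the orthogonality relation $\sum_{\alpha \in \F_q^\times}\omega^{r(\alpha)} = 0$. The only points requiring a moment's care are the bookkeeping that restricts attention to polynomials with nonzero constant term — which is exactly what makes $r$ well defined and forces the Euler product to run over $\Phi_q^+$ rather than over all monic irreducibles — and the (routine) justification that the infinite product and the interchange of product and sum are valid, which is immediate for formal power series and otherwise supplied by \Cref{pole}.
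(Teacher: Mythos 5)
Your argument is correct. Note, however, that the paper does not prove this lemma at all: it is quoted verbatim as Identity 3.4 of Britnell \cite{BR1}, so there is no internal proof to compare against. What you have supplied is a short, self-contained proof of the cited identity: you observe that $g\mapsto \omega^{r(g)}$ is completely multiplicative on monic polynomials with nonzero constant term (since $r$ is a discrete logarithm and the sign factor $(-1)^{\deg g}$ is multiplicative in $\deg$), expand the reciprocal product as the Euler product $\sum_{g}\omega^{r(g)}u^{\deg g}$ over all monic $g$ with $g(0)\neq 0$, and kill every coefficient of $u^n$, $n\ge 1$, by the orthogonality relation $\sum_{\alpha\in\F_q^\times}\omega^{r(\alpha)}=0$ for $\omega\neq 1$, using that exactly $q^{n-1}$ monic degree-$n$ polynomials have a prescribed nonzero constant term. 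All steps check out: the multiplicativity computation $r(gh)=r(g)+r(h)$ in $Z_{q-1}$ is right, the count $q^{n-1}$ is right, and the formal power series manipulations are legitimate (only finitely many irreducibles of each degree), with the analytic version for $|u|<1/q$ available via \Cref{pole} if one wants an identity of functions rather than of formal series; passing from "reciprocal product equals $1$" back to the stated product is immediate. This is essentially the classical computation that the $L$-series of a nontrivial character is trivial in this setting, which is presumably also the content of Britnell's proof; the only thing your write-up buys beyond the citation is self-containedness, which is a perfectly reasonable trade.
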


In what follows we will need to use the cycle index $Z_{\SL_n(q)}$ of $\SL_n(q)$. For a set of matrices $S$, the cycle index $Z_S$ of $S$ is
the polynomial in variables
\[ \{ x_{\phi,\lambda}: \phi \in \Phi_q^+, \lambda \in \Lambda \} \] defined by
\[ Z_S = \frac{1}{|S|} \sum_{A \in S} \prod_{\phi \in \Phi_q^+} x_{\phi,\lambda_A(\phi)}, \] where $\lambda_A(\phi)$ is the partition corresponding
to the irreducible polynomial $\phi$ in the rational canonical form of $A$.

Define $C_{\phi,q}(\lambda)$ to be equal to the size of the centralizer of the block matrix
\[ diag(\gamma(\phi^{\lambda_1}),\gamma(\phi^{\lambda_2}),\cdots) \] in $\GL_{deg(\phi) |\lambda|}(q)$, or equal to $1$ if $\lambda=\emptyset$. Here $\gamma(\phi^{\lambda_i})$ is the companion matrix of $\phi^{\lambda_i}$.

From Stong \cite{Sto}, one has the following cycle index for the groups $\GL_n(q)$. Note that the partitions $\lambda$ range over all partitions of all non-negative integers, and that $|\lambda|$ denotes the size of the partition $\lambda$.

\[ 1 + \sum_{n \geq 1} Z_{\GL_n(q)} u^n = \prod_{\phi \in \Phi_q^+} \sum_{\lambda} \frac{x_{\phi,\lambda} u^{deg(\phi) |\lambda|}}
{C_{\phi,q}(\lambda)}.\] 

Britnell \cite{BR1} derived a cycle index for the groups $\SL_n(q)$. To state it, let $\Omega_{q-1}$ be the set of complex roots of $z^{q-1}-1$.
For $\omega \in \Omega_{q-1}$, define

\[ K_{\omega}(u) = \prod_{\phi \in \Phi_q^+} \sum_{\lambda} \frac{\omega^{r(\phi) |\lambda|} x_{\phi,\lambda} u^{deg(\phi) |\lambda|}}
{C_{\phi,q}(\lambda)}.\] 

Britnell's cycle index for the groups $\SL_n(q)$ is given by

\[ q-1 + \sum_{n \geq 1} Z_{\SL_n(q)} u^n = \sum_{\omega \in \Omega_{q-1}} K_{\omega}(u).\]

It what follows, we let $N(q;j)$ be the number of monic degree $j$ irreducible polynomials over $\F_q$ with non-zero constant term. It is known
that $N(q;1)=q-1$ and that for $j>1$, \[ N(q;j)=\frac{1}{j} \sum_{r|j} \mu(r) q^{j/r},\] where $\mu$ is the Moebius function of
elementary number theory.

\begin{theorem} \label{slcoset} Fix $t \geq 1$. For any coset of $\SL_n(q)$ in $\GL_n(q)$, the fixed $q$, $n \rightarrow \infty$ proportion of elements whose characteristic polynomial has no irreducible factors of degree at most $t$ converges to
\begin{equation} \label{produ} \prod_{j=1}^t \prod_{i \geq 1} \left( 1 - 1/q^{ij} \right)^{N(q;j)}.\end{equation} 
\end{theorem}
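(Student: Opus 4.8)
The plan is to extract the generating function for the relevant proportion from Britnell's cycle index and then apply \Cref{fir} to read off the $n\to\infty$ limit. Concretely, one starts from the identity
\[
q-1 + \sum_{n\geq 1} Z_{\SL_n(q)} u^n = \sum_{\omega\in\Omega_{q-1}} K_\omega(u),
\]
and specializes the variables $x_{\phi,\lambda}$ to extract the subset $A(t)$ of matrices whose characteristic polynomial has no irreducible factor of degree $\le t$: set $x_{\phi,\lambda}=1$ whenever $\deg(\phi)>t$, and $x_{\phi,\emptyset}=1$, $x_{\phi,\lambda}=0$ for $\lambda\neq\emptyset$ whenever $\deg(\phi)\le t$. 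Under this specialization, for each $\omega$ the inner sum over $\lambda$ for a polynomial $\phi$ with $\deg\phi\le t$ collapses to $1$ (the $\lambda=\emptyset$ term), and for $\deg\phi>t$ it becomes $\sum_\lambda \omega^{r(\phi)|\lambda|}u^{\deg(\phi)|\lambda|}/C_{\phi,q}(\lambda)$. So $K_\omega(u)$ specializes to $\prod_{\deg\phi>t}\sum_\lambda \omega^{r(\phi)|\lambda|}u^{\deg(\phi)|\lambda|}/C_{\phi,q}(\lambda)$.

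The next step is to renormalize. Dividing the specialized cycle index by the corresponding specialization of $1+\sum_n |\SL_n(q)| u^n/\dots$ — equivalently, dividing by the full $\SL$ cycle index with all $x_{\phi,\lambda}=1$ — is not quite the right move; instead I would compare with the $\GL$ generating function. The key classical fact (used already in \cite{NP}) is that $\prod_{\phi}\sum_\lambda u^{\deg(\phi)|\lambda|}/C_{\phi,q}(\lambda) = \prod_{i\ge 1}(1-u^i)^{-1}\cdot(\text{something})$, and more usefully that $\prod_\phi \sum_\lambda \omega^{r(\phi)|\lambda|}u^{\deg\phi|\lambda|}/C_{\phi,q}(\lambda)$, for $\omega\ne 1$, equals $\prod_{i\ge1}(1-u^i)^{-1}$ times a factor coming from \Cref{sec}: indeed by Identity 3.4 of \cite{BR1} the product $\prod_\phi(1-\omega^{r(\phi)}u^{\deg\phi})=1$, which is what forces the $\omega\ne1$ terms to contribute a finite nonzero limit. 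So dividing the specialized $K_\omega$ by the unspecialized $K_\omega$ (all variables $1$) gives, for each $\omega$, a ratio of the form $f_\omega(u)/(1-u)\cdot(1-u)$, i.e. after the dust settles the proportion generating function is $F(u)/(1-u)$ with $F$ analytic in a disc of radius $>1$; and crucially the contribution of every $\omega\ne1$ tends to $0$ coefficientwise while the $\omega=1$ term carries the limit. Then \Cref{fir} gives the limit as $F(1)$, and evaluating $F(1)$ means taking, for each irreducible $\phi$ of degree $j\le t$, the reciprocal of the local factor $\sum_\lambda u^{j|\lambda|}/C_{\phi,q}(\lambda)$ evaluated at $u=1$, which by the standard identity $\sum_\lambda u^{j|\lambda|}/C_{\phi,q}(\lambda)=\prod_{i\ge1}(1-u^{ij})^{-1}$ equals $\prod_{i\ge1}(1-q^{-ij})$; raising to the power $N(q;j)$ (the number of such $\phi$) and multiplying over $j=1,\dots,t$ yields exactly \eqref{produ}.

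I would organize the write-up as: (1) specialize the cycle indices and identify numerator and denominator generating functions; (2) show that the ratio has the form $f(u)/(1-u)$ with $f$ analytic past the unit circle — this is where \Cref{pole} is used, to turn the infinite products $\prod_{\deg\phi>t}(\cdots)$ into absolutely convergent sums and to locate the singularity at $u=1$; (3) show the $\omega\neq 1$ summands are analytic in a disc of radius $>1$ (hence contribute $0$ in the limit) using \Cref{sec} to cancel the would-be pole — this is the step that distinguishes $\SL$ from $\GL$ and is where Britnell's identity is essential; (4) apply \Cref{fir} and evaluate $f(1)$ via the centralizer identity. I expect step (3) — controlling the $\omega\ne1$ terms and showing no new pole on $|u|=1$ sneaks in from the interaction of the twist $\omega^{r(\phi)}$ with the restriction $\deg\phi>t$ — to be the main obstacle; the combinatorial evaluation in step (4) is routine given \cite{NP}, and step (1) is bookkeeping. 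One should also double-check that the limit \eqref{produ} is independent of which coset of $\SL_n(q)$ one picks, which falls out because the coset is selected by a character sum over $\Omega_{q-1}$ and only the trivial character survives in the limit.
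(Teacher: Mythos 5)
Your proposal follows essentially the same route as the paper's proof: specialize Britnell's cycle index by setting $x_{\phi,\lambda}=0$ for $\deg\phi\le t$, $\lambda\neq\emptyset$, use Stong's local identity together with Identity 3.4 of \cite{BR1} (\Cref{sec}) so that the $\omega\neq 1$ terms become finite products analytic in a disc of radius greater than $1$ while the $\omega=1$ term carries the $1/(1-u)$ singularity, then apply \Cref{fir} (with \Cref{pole}) and treat arbitrary cosets via the character-sum form $1+\sum_{n}Z_{\mu\SL_n(q)}u^n=\sum_{\omega}\omega^{r(-\mu)}K_\omega(u)$. Apart from minor slips in intermediate formulas (the local factor is $\prod_{i\ge1}\bigl(1-u^{j}/q^{ij}\bigr)^{-1}$, not $\prod_{i\ge1}(1-u^{ij})^{-1}$, and for $\omega\neq1$ the full product over all $\phi$ equals exactly $1$ rather than $\prod_{i\ge1}(1-u^i)^{-1}$ times a correction), the plan is correct and matches the paper's argument.
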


\begin{proof} We first prove the result for the coset $\SL_n(q)$. Since the argument is almost identical, we assume that $t=1$ and point out the amendments needed for general $t$.

In Britnell's cycle index for special linear groups, set $x_{\phi,\lambda}$ to equal 0 if the degree of $\phi$ is equal to 1 and $|\lambda|>0$, and to equal 1 otherwise. Then $Z_{\SL_n(q)}$ is the proportion of eigenvalue free elements of $\SL_n(q)$. With these same substitutions, we get that
\[ K_{\omega}(u) = \prod_{\phi: deg(\phi)>1} \sum_{\lambda} \frac{\omega^{r(\phi) |\lambda|} u^{deg(\phi) |\lambda|}}{C_{\phi,q}(\lambda)}.\]

Now  \[ \sum_{\lambda} \frac{\omega^{r(\phi) |\lambda|} u^{deg(\phi) |\lambda|}}{C_{\phi,q}(\lambda)} = \sum_{\lambda} \frac{\left( \omega^{r(\phi)} u^{deg(\phi)} \right)^{|\lambda|}}{C_{\phi,q}(\lambda)}.\]

From \cite{Sto}, one has that
\[ \sum_{\lambda} \frac{u^{deg(\phi) |\lambda|}}{C_{\phi,q}(\lambda)} = \prod_{i \geq 1} \frac{1}{1-u^{deg(\phi)}/q^{i deg(\phi)}}.\]
Thus
\[  \sum_{\lambda} \frac{\omega^{r(\phi) |\lambda|} u^{deg(\phi) |\lambda|}}{C_{\phi,q(\lambda)}} = 
 \prod_{i \geq 1} \frac{1}{1-u^{deg(\phi)} \omega^{r(\phi)}/q^{i deg(\phi)}}.\]

It follows that
\begin{eqnarray*}
K_{\omega}(u) & = & \prod_{\phi:deg(\phi)>1} \prod_{i \geq 1} \frac{1}{1-u^{deg(\phi)} \omega^{r(\phi)}/q^{i deg(\phi)}}\\
& = & \prod_{\phi:deg(\phi)=1} \prod_{i \geq 1} \left( 1 - u \omega^{r(\phi)}/q^i \right)
\prod_{\phi} \prod_{i \geq 1}  \frac{1}{1-u^{deg(\phi)} \omega^{r(\phi)}/q^{i deg(\phi)}}.
\end{eqnarray*}

Lemma \ref{sec} implies that for $\omega \neq 1$,
\[ \prod_{i \geq 1}   \prod_{\phi} \frac{1}{1-u^{deg(\phi)} \omega^{r(\phi)}/q^{i deg(\phi)}} = 1.\]

Hence if $\omega \neq 1$,
\[ K_{\omega}(u) = \prod_{i \geq 1} \prod_{\phi:deg(\phi)=1} \left( 1- u \omega^{r(\phi)}/q^i \right)
= \prod_{i \geq 1} \prod_{j=1}^{q-1} \left( 1 - u \omega^j/q^i  \right).\] 

Note that for general $t$, 

\[ K_{\omega}(u) = \prod_{i \geq 1} \prod_{j=1}^t \prod_{\phi:deg(\phi)=j} \left( 1- u^j \omega^{r(\phi)}/q^{ij} \right).\]

Now if $\omega=1$, we have that

\[ K_1(u) = \prod_{\phi:deg(\phi)=1} \prod_{i \geq 1} \left( 1 - u/q^i \right)
\prod_{\phi} \prod_{i \geq 1}  \frac{1}{1-u^{deg(\phi)}/q^{i deg(\phi)}}.\]
Setting all variables in the cycle index of the groups $\GL_n(q)$ equal to 1 gives that
\[ \prod_{\phi} \prod_{i \geq 1}  \frac{1}{1-u^{deg(\phi)}/q^{i deg(\phi)}} = \frac{1}{1-u}.\]
It follows that
\[ K_1(u) = \frac{1}{1-u} \left( \prod_{i \geq 1} (1-u/q^i) \right)^{q-1}.\]

Note that for general $t$,

\[ K_1(u) = \frac{1}{1-u} \left( \prod_{j=1}^t \prod_{i \geq 1} (1-u/q^{ij}) \right)^{N(q;j)}.\]

Summarizing, we have shown that with the above substitutions \[ q-1 + \sum_{n \geq 1} Z_{\SL_n(q)} u^n\] is equal to
$A+B$ where
\[ A =  \frac{1}{1-u} \prod_{i \geq 1} (1-u/q^i)^{q-1} \] and
\[ B = \frac{1-u}{1-u}  \sum_{\omega \neq 1} \prod_{i \geq 1} \prod_{j=1}^{q-1} (1- u \omega^j /q^i).\] (Note that there is no
misprint in the formula for $B$). The result for $\SL_n(q)$ now follows from Lemma \ref{fir}, since by Lemma \ref{pole},
$(1-u)A$ and $(1-u)B$ are both analytic in a disc of radius $R>1$ (any $1<R<q$ works).

To treat other cosets of $\SL_n(q)$ in $\GL_n(q)$, use the fact from \cite{BR1} that
\[ 1 + \sum_{n \geq 1} Z_{\mu \SL_n(q)} u^n = \sum_{\omega \in \Omega_{q-1}} \omega^{r(-\mu)} K_{\omega}(u) \]
and again the dominant term is $\omega=1$.
\end{proof}

Next we derive some useful properties of the quantity  \eqref{produ}. 

\begin{proposition} \label{boun} Consider \eqref{produ} with $t$ fixed.
\begin{enumerate}
\item The quantity \eqref{produ} converges to \[ e^{-(1+1/2+\cdots+1/t)} \] as $q \rightarrow \infty$. 
\item The quantity \eqref{produ} is uniformly bounded away from $1$.
\item  The quantity \eqref{produ} is uniformly bounded away from $0$.
\end{enumerate}
\end{proposition}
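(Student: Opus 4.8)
The plan is to analyze the product
\[
P(q,t) = \prod_{j=1}^t \prod_{i \geq 1} \left(1 - 1/q^{ij}\right)^{N(q;j)}
\]
by comparing it, after taking $-\log$, with the divergent-looking sum $\sum_j N(q;j)/q^j$, which is $O(t)$ since $N(q;j) \leq q^j/j$. For part (1), I would take logarithms and write $-\log P(q,t) = \sum_{j=1}^t N(q;j) \sum_{i \geq 1} \left(1/q^{ij} + O(1/q^{2ij})\right)$, using $-\log(1-x) = x + O(x^2)$ uniformly for $|x| \leq 1/q \leq 1/2$. The inner sum over $i$ is $\frac{1}{q^j - 1} + O(1/q^{2j})$, so $-\log P(q,t) = \sum_{j=1}^t \frac{N(q;j)}{q^j - 1} + O(1/q)$. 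Since $N(q;j) = \frac{1}{j}\sum_{r \mid j}\mu(r) q^{j/r} = \frac{q^j}{j} + O(q^{j/2})$, we get $\frac{N(q;j)}{q^j-1} = \frac{1}{j} + O(q^{-j/2})$, and therefore $-\log P(q,t) \to 1 + 1/2 + \cdots + 1/t$ as $q \to \infty$, which gives (1).

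For part (2), I want a uniform lower bound on $-\log P(q,t)$ bounded away from $0$; it suffices to bound one factor away from $1$. The $j=1$ factor is $\left(\prod_{i\ge1}(1-1/q^i)\right)^{q-1}$; taking logs, $-(q-1)\sum_{i\ge1}\log\frac{1}{1-1/q^i} \le -(q-1)\cdot\frac{1}{q-1} = -1$ using $\sum_{i\ge1}\log\frac{1}{1-1/q^i} \ge \sum_{i\ge1} 1/q^i = \frac{1}{q-1}$, so that factor is $\le e^{-1} < 1$ for every $q$. Since every other factor is at most $1$, $P(q,t) \le e^{-1}$ for all $q$ and all $t \geq 1$, proving (2).

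For part (3), I need a uniform upper bound on $-\log P(q,t)$. Using $-\log(1-x) \le x + x^2$ for $0 \le x \le 1/2$ (valid since $q \geq 2$, $ij \geq 1$ so $1/q^{ij} \leq 1/2$), we get
\[
-\log P(q,t) \le \sum_{j=1}^t N(q;j)\sum_{i\ge1}\left(\frac{1}{q^{ij}} + \frac{1}{q^{2ij}}\right) \le \sum_{j=1}^t N(q;j)\cdot\frac{2}{q^j - 1}.
\]
Now $N(q;j) \le q^j/j \le 2(q^j-1)/j$ for $q \geq 2$ (since $q^j/(q^j-1) \le 2$), hence $-\log P(q,t) \le \sum_{j=1}^t 4/j \le 4(1 + \log t)$, which is a finite bound independent of $q$ — but it does depend on $t$. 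Since the proposition fixes $t$, this is exactly what is needed: $P(q,t) \ge e^{-4(1+\log t)} > 0$ uniformly in $q$. (If a $t$-uniform bound were wanted it would be false, since $P(q,t) \to e^{-H_t} \to 0$; so part (3) must be read with $t$ fixed, as the statement says.)

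The only mild subtlety — and the step I would be most careful about — is justifying the uniform estimates $-\log(1-x) = x + O(x^2)$ and the bound $N(q;j) \leq q^j/j$ near the boundary cases $q = 2$, $j = 1$ (where $N(2;1) = 1 = q-1$, fine) and making sure no factor is ever exactly $0$ or negative, i.e. that $1 - 1/q^{ij} > 0$ always, which is immediate since $q \geq 2$. Everything else is elementary manipulation of convergent products and series, and the three bounds drop out of the single clean comparison with $\sum_j 1/j$.
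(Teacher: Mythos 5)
Your proposal is correct and follows essentially the same route as the paper: elementary logarithmic estimates based on the two ingredients $N(q;j)\le q^j/j$ and $-\log(1-x)\le x+x^2$ for $0\le x\le 1/2$, with part (2) obtained by bounding a single factor away from $1$ and noting all remaining factors are at most $1$ (the paper uses only the $i=j=1$ factor to get $e^{-1/2}$, you use the whole $j=1$ factor to get $e^{-1}$). The only differences are organizational — you take the logarithm of the entire product at once and compare with the harmonic sum, yielding slightly sharper explicit constants — but the substance of the argument coincides with the paper's.
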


\begin{proof} Part 1 follows easily from the fact that the $i>1$ terms in \eqref{produ} converge to $1$.

Next we prove part 2. By looking only at the $i=j=1$ term, one has that
\begin{eqnarray*}
\prod_{j=1}^t \prod_{i \geq 1} \left( 1 - 1/q^{ij} \right)^{N(q;j)} & < & (1-1/q)^{q-1} \\
& = & e^{(q-1) \log(1-1/q)} \\
& \leq & e^{-(q-1)/q} \\
& \leq & \frac{1}{\sqrt{e}}.
\end{eqnarray*}

To prove part 3, since $t$ is fixed, it's enough to show that for $j$ fixed between $1$ and $t$,
\[ \prod_{i \geq 1} (1-1/q^{ij})^{N(q;j)} \] is uniformly bounded away from $0$. Since $N(q;j) \leq q^j/j$, one has
that \[ \prod_{i \geq 1} (1-1/q^{ij})^{N(q;j)} \geq \prod_{i \geq 1} (1-1/q^{ij})^{q^j/j} \geq \prod_{i \geq 1} (1-1/q^{ij})^{q^j}.\]
So it's enough to show that for all prime powers $q$, $\prod_i (1-1/q^i)^q$ is uniformly bounded away from $0$. Now
\begin{eqnarray*}
\prod_i (1-1/q^i)^q & = & e^{q \log \prod_i (1-1/q^i)} \\
& = & e^{q \sum_i \log (1-1/q^i)} \\
& \geq & e^{q \sum_i (-1/q^i - 1/q^{2i})} \\
& = & e^{-1/(1-1/q) - 1/(q(1-1/q^2))}. \end{eqnarray*} The inequality used the fact that $\log(1-x) \geq -x-x^2$ for $0 \leq x \leq 1/2$.
Finally, note that $-1/(1-1/q) - 1/(q(1-1/q^2))$ is minimized for $q=2$, so that \[ \prod_i (1-1/q^i)^q \geq e^{-2-2/3} > 0.\]
\end{proof} 

\subsection{Cosets of $\SU_n(q)$ in $\GU_n(q)$} \label{cosetsofu}

In fact the unitary analog of Section \ref{cosetA} is not needed for the proof of \Cref{t:main}. Nevertheless, we state the corresponding
result, whose proof runs along the same lines as Theorem \ref{slcoset}, using Britnell's cycle index for special unitary groups \cite{BR2} instead of his
cycle index for the special linear groups. We put the details in an appendix.

\begin{theorem} \label{ucoset}  For $t$ and $q$ fixed, the $n \rightarrow \infty$ limiting proportion
of elements in any coset of $\SU_n(q)$ in $\GU_n(q)$ which have no factors of degree at most $t$ dividing the characteristic
polynomial is equal to \begin{equation} \label{productu} \prod_{j=1}^t \prod_{i \geq 1} \left( 1 + \frac{1}{(-q)^{ij}} \right)^{\tilde{N}(q;j)}
\prod_{i \geq 1} \left( 1 - \frac{1}{q^{2ij}} \right)^{\tilde{M}(q;j)} \end{equation}
\end{theorem}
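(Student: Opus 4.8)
The plan is to run the proof of \Cref{slcoset} with $\SL$ replaced by $\SU$ everywhere, using Britnell's cycle index for the special unitary groups \cite{BR2} in place of his cycle index for the special linear groups. The only structural novelty is combinatorial: conjugacy in $\GU_n(q)$, viewed inside $\GL_n(q^2)$, is governed not by the monic irreducible polynomials over $\F_{q^2}$ directly, but by their orbits under the conjugate--dual involution $\phi \mapsto \phi^\dagger$ on $\F_{q^2}[z]$. Each such orbit is either a single \emph{self-conjugate} polynomial $\phi = \phi^\dagger$ of some degree $d$, or an unordered pair $\{\phi, \phi^\dagger\}$ with $\phi \neq \phi^\dagger$, both of degree $d$. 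Britnell's formula then has the shape
\[ q+1 + \sum_{n \geq 1} Z_{\SU_n(q)} u^n = \sum_{\omega \in \Omega_{q+1}} K_\omega^U(u), \]
where $\Omega_{q+1}$ is the set of complex $(q+1)$-st roots of unity (the determinant maps $\GU_n(q)$ onto a cyclic group of order $q+1$) and, exactly as in \Cref{slcoset}, $K_\omega^U(u)$ is a product over the orbits above of local generating functions in the variables $x_{\phi,\lambda}$, each weighted by $\omega^{r(\phi)|\lambda|}$; for a self-conjugate orbit the relevant centralizer factor $C^U_\phi(\lambda)$ is that of a unitary group (so signs $-q$ appear), and for a pair it is that of a general linear group over $\F_{q^{2d}}$.

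To isolate the elements whose characteristic polynomial has no irreducible factor of degree at most $t$, I would set $x_{\phi,\lambda} = 0$ whenever $|\lambda| > 0$ and $\phi$ lies in an orbit of degree at most $t$, and $x_{\phi,\lambda} = 1$ otherwise; then $Z_{\SU_n(q)}$ becomes the proportion in the statement. Using the Stong-type evaluations $\sum_\lambda u^{d|\lambda|}/C^U_\phi(\lambda) = \prod_{i \geq 1}(1 + u^d/(-q)^{id})^{-1}$ for a self-conjugate $\phi$ of degree $d$ and $\sum_\lambda u^{2d|\lambda|}/C_\phi(\lambda) = \prod_{i \geq 1}(1 - u^{2d}/q^{2id})^{-1}$ for a pair of degree $d$, each $K_\omega^U(u)$ factors, after the substitution, as a finite product over the orbits of degree at most $t$ times an infinite product over \emph{all} orbits. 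The essential input is the $\SU$-analog of \Cref{sec} (again due to Britnell \cite{BR2}): for $\omega$ a nontrivial $(q+1)$-st root of unity the infinite product over all orbits equals $1$, so $K_\omega^U(u)$ collapses to a finite product. For $\omega = 1$ the same infinite product equals $1/(1-u)$, because setting every variable equal to $1$ in the $\GU_n(q)$ cycle index gives $1 + \sum_{n \geq 1} u^n$; hence
\[ K_1^U(u) = \frac{1}{1-u}\prod_{j=1}^{t}\prod_{i \geq 1}\left(1 + \frac{u^j}{(-q)^{ij}}\right)^{\tilde N(q;j)}\left(1 - \frac{u^{2j}}{q^{2ij}}\right)^{\tilde M(q;j)}, \]
where $\tilde N(q;j)$ and $\tilde M(q;j)$ count the self-conjugate degree-$j$ orbits and the conjugate pairs of degree $j$ respectively.

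Putting this together, $q+1 + \sum_{n \geq 1} Z_{\SU_n(q)} u^n = A + B$ with $A = (1-u)^{-1} P(u)$, where $P(u)$ is the finite product displayed above, and $B = \sum_{\omega \neq 1} K_\omega^U(u)$. By \Cref{pole} the series $P(u)$ and $B(u)$ are analytic in a disc of radius $R$ for any $1 < R < q$, so the left-hand side equals $f(u)/(1-u)$ with $f(u) = P(u) + (1-u)B(u)$ analytic in a disc of radius $>1$, and \Cref{fir} gives $\lim_{n \to \infty} Z_{\SU_n(q)} = f(1) = P(1)$, which is exactly \eqref{productu}. For an arbitrary coset $\mu\SU_n(q)$ one instead uses Britnell's twisted identity $1 + \sum_{n \geq 1} Z_{\mu\SU_n(q)} u^n = \sum_\omega \omega^{r(-\mu)} K_\omega^U(u)$; only the $\omega = 1$ term is singular at $u = 1$, again with residue $-P(1)$, so the limiting proportion is independent of the coset.

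The main obstacle is entirely the unitary bookkeeping: fixing the centralizer orders $C^U_\phi(\lambda)$ and the precise form of the two Stong-type identities above --- with the correct signs $-q$ (and $(-1)^{|\lambda|}$) in the self-conjugate case --- so that the surviving finite product is exactly $P(u)$, and recording the $\SU$-version of \Cref{sec}. Conceptually there is nothing beyond \Cref{slcoset}, which is why we defer these verifications to the appendix. Finally, the analog of \Cref{boun} for \eqref{productu} (namely, that it tends to $e^{-(1+1/2+\cdots+1/t)}$ as $q \to \infty$ and is uniformly bounded away from $0$ and $1$) follows from the same elementary estimates: the $i > 1$ factors and all paired factors tend to $1$, and since $\tilde N(q;1) = q+1$ the dominant $j=1$ self-conjugate factor is $(1 + 1/(-q))^{q+1} = (1 - 1/q)^{q+1}$, controlled exactly as in \Cref{boun}.
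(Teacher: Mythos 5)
Your proposal is correct and follows essentially the same route as the paper's appendix: Britnell's cycle index for $\SU_n(q)$, the substitution killing variables attached to orbits of degree at most $t$, the Stong/Wall evaluations with the $(-q)$ signs for self-conjugate polynomials, the $\SU$-analogue of \Cref{sec} (Identity 4.2 of \cite{BR2}) to collapse the $\omega \neq 1$ terms, the $\GU$ cycle index giving the $1/(1-u)$ factor at $\omega = 1$, and then \Cref{fir,pole}. The only divergence is your treatment of an arbitrary coset via a twisted cycle index in the style of \cite{BR1}, whereas the paper instead passes to the subsequence with $(n,q+1)=1$ and multiplies by a norm-one scalar to see that all cosets yield the same limit; both are fine.
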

 Here $\tilde{N}(q;j)$ is the number of
monic irreducible polynomials of degree $j$ over $\F_{q^2}$ such that $\phi=\tilde{\phi}$; this is only non-zero for odd $j$. And $\tilde{M}(q;j)$ is the number of
(unordered) pairs $\{\phi,\tilde{\phi}\}$ of monic irreducible polynomials of degree $j$ over $\F_{q^2}$ such that $\phi \neq \tilde{\phi}$.
Here the conjugate $\tilde{\phi}$ of a degree $n$ polynomial with non-zero constant term is defined by
\[ \tilde{\phi}(z) := \phi(0)^{-\sigma} z^n \phi^{\sigma}(1/z),\] where $\phi^{\sigma}(z)$ replaces each coefficient of $\phi(z)$ by its
$q$th power.

\begin{remark}
	
Using explicit formulas for $\tilde{N}(q;j)$ and $\tilde{M}(q;j)$ in \cite{F1}, it is straightforward to see that \eqref{productu} converges
to \[ e^{-\left( \sum_{j=1 \atop odd}^t 1/j + 1/2 \sum_{j=1}^t 1/j \right)} \]  as $q \rightarrow \infty$ (note that the $i>1$ terms
in \eqref{productu} contribute nothing). Moreover, for $q$ a prime power, one can show, arguing as in Proposition \ref{boun}, that
\eqref{productu} is uniformly bounded away from $0$ and $1$. Since we don't need these two facts, we omit the details.

\end{remark}

\subsection{$\Sp_{2n}(q)$} \label{symplecticgroups}

We provide results for the symplectic groups. The proofs for the formulas for limiting proportions are a minor modification of the methods of Neumann and Praeger \cite{NP} to study eigenvalue free proportions, so we just state the formulas. We do include a complete proof that in odd characteristic these proportions are uniformly bounded away from $0$ and $1$ (the even characteristic case is almost identical which is not surprising since the formulas for these proportions are almost identical).

First consider the case of odd characteristic. In the statement of Theorem \ref{sympodd}, $N^*(q;j)$ is the number of monic,
irreducible self-conjugate polynomials of degree $j$ over $\F_q$, and $M^*(q;j)$ is the number of (unordered) conjugate pairs $\{\phi,\phi^*\}$
of monic, irreducible non-self conjugate polynomials of degree $j$ over $\F_q$, and the conjugate $\phi^*$ of a degree $n$ polynomial $\phi$ with non-zero
constant term is defined by
\[ \phi^*(z):= \phi(0)^{-1} z^n \phi(z^{-1}).\] Explicit  formulas for $N^*(q;j)$ and $M^*(q;j)$ are given in Lemma 1.3.16 in \cite{FNP}.

\begin{theorem} \label{sympodd}
\begin{enumerate}
\item For $t$ and odd $q$ fixed, the $n \rightarrow \infty$ limiting proportion of
elements of $\Sp_{2n}(q)$ whose characteristic polynomial has no factors of degree at most $t$ is equal to
\begin{equation} \label{oddsymprod} \prod_{i \geq 1} (1-1/q^{2i-1})^2 \prod_{j \leq t/2} \prod_{i \geq 1} \left( 1+(-1)^i/q^{ij} \right)^{N^*(q;2j)} 
\prod_{j \leq t} \prod_{i \geq 1} \left( 1-1/q^{ij} \right)^{M^*(q;j)}. \end{equation}

\item The quantity (\ref{oddsymprod}) is uniformly bounded away from $1$.

\item The quantity (\ref{oddsymprod}) is uniformly bounded away from $0$.
\end{enumerate}
\end{theorem}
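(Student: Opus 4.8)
The plan is to mirror the structure of the proof of \Cref{boun}, isolating each type of factor in \eqref{oddsymprod} and bounding it separately, with the first factor $\prod_{i\ge1}(1-1/q^{2i-1})^2$ playing the role of the ``dominant'' contribution. For part (2), I would simply extract the $i=1$ term of the first product, namely $(1-1/q)^2$, and observe that every other factor in \eqref{oddsymprod} is at most $1$ in absolute value (the $N^*$-factors with the sign $(-1)^i$ need a moment's care: for $i$ even the factor $1+1/q^{ij}$ exceeds $1$, so one should instead pair consecutive $i$'s, $(1+(-1)^i/q^{ij})(1+(-1)^{i+1}/q^{(i+1)j})<1$, or more simply bound the whole $N^*$ sub-product by $\prod_{i\ge1}(1-1/q^{2i})^{-1}\prod_{i\ge1}(1-1/q^{2i-1})$-type estimates — but this must be done so that the resulting bound is still $<1$ uniformly). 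Cleanest is to note that the entire expression \eqref{oddsymprod} is a product of $(1-1/q)^2$ with quantities that are uniformly bounded (above and away from $0$), and then check that $(1-1/q)^2\le 1/4$ with the ``uniformly bounded'' factor contributing at most some absolute constant $<4$; alternatively just show \eqref{oddsymprod}$\le(1-1/q)^2\cdot C$ for an absolute $C$ and $q\ge q_0$, handling $q\in\{3,5,\dots,q_0\}$ by hand. In any case part (2) should be short.

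For part (3) the strategy is the one used in \Cref{boun}(3): bound each sub-product from below by replacing the exponents with the crude upper bounds for the polynomial-counting functions. Here $N^*(q;2j)\le q^j$ (self-conjugate irreducibles of degree $2j$ correspond to certain degree-$j$ data) and $M^*(q;j)\le q^j/(2j)\le q^j$, while the first product has fixed exponent $2$. So it suffices to show that each of
\[
\prod_{i\ge1}(1-1/q^{2i-1})^2,\qquad \prod_{i\ge1}\bigl(1+(-1)^i/q^{ij}\bigr)^{q^j},\qquad \prod_{i\ge1}(1-1/q^{ij})^{q^j}
\]
is uniformly bounded away from $0$ for $q\ge3$ and $1\le j\le t$. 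The last of these is already handled inside the proof of \Cref{boun}(3): $\prod_i(1-1/q^{ij})^{q^j}\ge\prod_i(1-1/Q^i)^Q$ with $Q=q^j\ge3$, and $\prod_i(1-1/Q^i)^Q\ge e^{-2-2/3}$ by the $\log(1-x)\ge-x-x^2$ estimate. The first product is a finite positive constant for each fixed $q$ and, by the same logarithm estimate applied to the (finitely many, geometrically small) terms, is bounded below by an absolute constant. For the middle product one takes logarithms, $\sum_i(-1)^i\log(1+(-1)^i/q^{ij})$... wait, more directly $\log(1+(-1)^i/q^{ij})\ge-1/q^{ij}$ always (since $\log(1+x)\ge -|x|/(1-|x|)$ for $|x|<1$, or just $\log(1+y)\ge y\ge -1/q^{ij}$ when $y\ge0$ and $\log(1-x)\ge-2x$ for small $x$ when $y<0$), so $q^j\sum_i\log(1+(-1)^i/q^{ij})\ge -2q^j\sum_i q^{-ij}=-2q^j\cdot\frac{q^{-j}}{1-q^{-j}}=-\frac{2}{1-q^{-j}}\ge-3$ for $q\ge3$; hence the middle product is $\ge e^{-3}$. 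Multiplying the three bounds gives a positive absolute lower bound for \eqref{oddsymprod}.

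The main obstacle, such as it is, is purely bookkeeping: getting clean, uniform-in-$q$-and-$j$ upper bounds for $N^*(q;2j)$ and $M^*(q;j)$ from the explicit formulas in Lemma 1.3.16 of \cite{FNP} (the Möbius-type sums), and making sure the sign-alternating $N^*$-product is controlled on the correct side in both parts — it helps part (3) (pushing the product up toward $1$ on even $i$) but hurts part (2), so in part (2) one cannot discard it as ``$\le1$'' termwise and must instead use a pairing or a global bound. A secondary point is that ``uniformly'' means over all odd prime powers $q$, so after establishing the estimates for $q\ge3$ one should double-check there is no degenerate behavior at $q=3$; the bounds above are all already valid there. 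No genuinely new idea beyond \Cref{boun} is required.
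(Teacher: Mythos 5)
Your part (3) is essentially sound and close to the paper's argument: bound the exponents by $N^*(q;2j)\le q^j$ and $M^*(q;j)\le q^j/(2j)$, then use the $\log(1-x)\ge -x-x^2$ type estimates; your direct logarithmic treatment of the alternating-sign product is a valid (slightly different) route to the same bound the paper gets by first observing $\prod_i(1+(-1)^i/q^{ij})>(1-1/q^j)$. (You also silently skip part (1), but so does the paper, which only cites the method of \cite{NP}.)

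Part (2), however, has a genuine gap. Your central numerical claim, $(1-1/q)^2\le 1/4$, is false for every odd prime power: at $q=3$ it equals $4/9$, and as $q\to\infty$ it tends to $1$. More fundamentally, no argument that isolates the fixed-exponent factor $\prod_{i\ge1}(1-1/q^{2i-1})^2$ (or its $i=1$ term) and treats everything else as merely ``bounded by an absolute constant'' can work, because that factor alone tends to $1$ as $q\to\infty$; the uniform gap below $1$ must come from a factor whose \emph{exponent grows with} $q$. This is exactly what the paper uses: after discarding the $N^*$-product as $\le 1$ (via the pairing $(1-1/q^i)(1+1/q^{i+1})\le 1$, which you correctly anticipate), it keeps the $i=j=1$ term of the $M^*$-product with $M^*(q;1)=(q-3)/2$, so that
\[
(1-1/q)^{(q-3)/2}\le e^{-(q-3)/(2q)}\le e^{-0.2}\qquad (q\ge 5),
\]
which is uniformly away from $1$, while $q=3$ is handled by the first product alone (there $(1-1/3)^2=4/9$). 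Your fallback ``show \eqref{oddsymprod}$\le(1-1/q)^2\cdot C$ with $C<4$ and check small $q$ by hand'' fails for the same reason: the problematic regime is large $q$, not small $q$, and a bound of the form $(1-1/q)^2\cdot C$ with $C\ge 1$ does not stay away from $1$. To repair part (2) you must bring in the growing exponent $M^*(q;1)$ (or equivalently the $q\to\infty$ limit computation), as in the paper.
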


\begin{proof} The proof of part 1 is a straightforward generalization of the method of \cite{NP} for the special case $t=1$.

For part 2, we first claim that \[ \prod_{j \leq t/2} \prod_{i \geq 1} \left( 1+(-1)^i/q^{ij} \right)^{N^*(q;2j)} \leq 1.\] Indeed, this follows from the fact that \[ \prod_{i \geq 1} \left( 1+(-1)^i/q^{ij} \right) \leq 1,\]  which in turn follows from the fact that for any prime power $q$, \[ (1-1/q^i)(1+1/q^{i+1}) \leq 1 \] for all $i$.

Thus it is enough to show that 
\[  \prod_{i \geq 1} (1-1/q^{2i-1})^2 \prod_{j \leq t} \prod_{i \geq 1} \left( 1-1/q^{ij} \right)^{M^*(q;j)} \] is uniformly bounded away from $1$. Taking only terms $j=i=1$ in the second product and using the fact that $M^*(q;1)=(q-3)/2$, it is enough to show that
\[ \prod_{i \geq 1} (1-1/q^{2i-1})^2 \cdot (1-1/q)^{(q-3)/2} \] is uniformly bounded away from $1$. If $q=3$ this is clear, so assume that $q \geq 5$. Then \[ (1-1/q)^{(q-3)/2} = e^{(q-3)/2 \log(1-1/q)} \leq e^{-(q-3)/(2q)} \leq e^{-.2},\] where we used the inequality $\log(1-x) \leq -x$. This completes the proof of part 2.

For part 3, define
\[ A = \prod_{i \geq 1} (1-1/q^{2i-1})^2 \]
\[ B = \prod_{j \leq t/2} \prod_{i \geq 1} \left( 1+(-1)^i/q^{ij} \right)^{N^*(q;2j)} \]
\[ C = \prod_{j \leq t} \prod_{i \geq 1} \left( 1-1/q^{ij} \right)^{M^*(q;j)} \]

It is enough to show that $A,B,C$ are uniformly bounded away from $0$.

To see that $A$ is uniformly bounded away from $0$, note that
\[ \prod_{i \geq 1} (1-1/q^{2i-1})^2 >  \prod_{i \geq 1} (1-1/q^i)^2 \geq \prod_i (1-1/q^i)^q \] which is uniformly
bounded away from $0$ by the proof of part 3 of Proposition \ref{boun}.

To treat $B$, note that since $t$ is fixed, it's enough to show that for all fixed $j$,
\[ \prod_i (1+(-1)^i/q^{ij})^{N^*(q;2j)} \] is uniformly bounded away from $0$. It's easy to see
that for all $i,j$, \[  \prod_i (1+(-1)^i/q^{ij}) > (1-1/q^j).\] So we need only show that for all fixed $j$,
\[ (1-1/q^j)^{N^*(q;2j)} \] is uniformly bounded away from $0$. Since $N^*(q;2j) \leq q^j/(2j)$, it follows that
\[ (1-1/q^j)^{N^*(q;2j)} \geq (1-1/q^j)^{q^j/(2j)} \geq (1-1/q^j)^{q^j/2},\] so we show that for $q \geq 3$, the quantity $(1-1/q)^{q/2}$ is
uniformly bounded away from $0$. However this is clear since
\[ (1-1/q)^{q/2} = e^{q/2 \log(1-1/q)} \geq e^{q/2(-1/q-1/q^2)} = e^{-1/2-1/(2q)} \geq e^{-2/3}.\] The inequality used the
fact that $\log(1-x) \geq -x-x^2$ for $0 \leq x \leq 1/2$.

To treat $C$, note that since $t$ is fixed, it's enough to show that for all fixed $j$,
\[ \prod_i (1-1/q^{ij})^{M^*(q;j)} \] is uniformly bounded away from $0$. Since $M^*(q;j) \leq q^{j}/(2j)$ it's enough to show that
$\prod_i (1-1/q^{ij})^{q^j/(2j)}$ is uniformly bounded away from $0$. But this follows from the fact, proved in part 3 of Proposition \ref{boun}, that for all prime powers $q$,  $\prod_i (1-1/q^{i})^{q/2}$ is bounded away from $0$.

\end{proof}

\begin{remark} Using the explicit formulas for $N^*(q;j)$ and $M^*(q;j)$ it is easy to see that the $q \rightarrow \infty$
limit of \eqref{oddsymprod} is equal to \[ e^{-1/2 \left( \sum_{j \leq t/2} 1/j + \sum_{j \leq t} 1/j \right)}.\] (Indeed, the $i>1$ terms contribute nothing as $q \rightarrow \infty$).
\end{remark}

Next we consider even characteristic. The results and proofs are almost identical to the odd characteristic case, so we omit the
proofs. Note that in Theorem \ref{sympeven}, the formulas for $N^*(q;j)$ and $M^*(q;j)$ in Lemma 1.3.16 in \cite{FNP} are slightly different than in odd characteristic.

\begin{theorem} \label{sympeven}
\begin{enumerate}
\item For $t$ and even $q$ fixed, the $n \rightarrow \infty$ limiting proportion of
elements of $\Sp_{2n}(q)$ whose characteristic polynomial has no factors of degree at most $t$ is equal to
\begin{equation} \label{evensymprod} \prod_{i \geq 1} (1-1/q^{2i-1}) \prod_{j \leq t/2} \prod_{i \geq 1} \left( 1+(-1)^i/q^{ij} \right)^{N^*(q;2j)} 
\prod_{j \leq t} \prod_{i \geq 1} \left( 1-1/q^{ij} \right)^{M^*(q;j)}. \end{equation} 

\item The quantity (\ref{evensymprod}) is uniformly bounded away from $1$.

\item The quantity (\ref{evensymprod}) is uniformly bounded away from $0$.
\end{enumerate}
\end{theorem}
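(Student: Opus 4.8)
To prove the even-characteristic statement, the plan is to follow the proof of \Cref{sympodd} almost verbatim, flagging only the two places where characteristic $2$ changes the bookkeeping: the eigenvalues $1$ and $-1$ coincide, so there is a single ``special'' self-reciprocal linear factor $z-1$ rather than the two factors $z\mp1$, and the closed forms of $N^*(q;j)$ and $M^*(q;j)$ taken from Lemma~1.3.16 of \cite{FNP} change accordingly. For part~(1) I would run the cycle-index computation of Neumann--Praeger \cite{NP} for $\Sp_{2n}(q)$: set $x_{\phi,\lambda}=0$ whenever $\deg(\phi)\le t$ and $|\lambda|>0$ (and $=1$ otherwise), factor the resulting generating function in $u$ into the shape $f(u)/(1-u)$ with $f$ analytic on a disc of radius $R$ for some $1<R<q$ (invoking \Cref{pole} for absolute convergence of the Euler products), and conclude by \Cref{fir} that the coefficient of $u^n$ tends to $f(1)$, which is precisely \eqref{evensymprod}. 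The coalescence of $\pm1$ is exactly what turns the exponent $2$ on $\prod_{i\ge1}(1-1/q^{2i-1})$ in \eqref{oddsymprod} into the exponent $1$ in \eqref{evensymprod}.

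For part~(2), exactly as in \Cref{sympodd}, the middle product is $\le1$, since $\prod_{i\ge1}(1+(-1)^i/q^{ij})\le1$ (because $(1-1/q^i)(1+1/q^{i+1})\le1$ for every $i$) and $N^*(q;2j)\ge0$. So it suffices to bound $\prod_{i\ge1}(1-1/q^{2i-1})\cdot\prod_{j\le t}\prod_{i\ge1}(1-1/q^{ij})^{M^*(q;j)}$ away from $1$, and for this I keep only the $i=j=1$ factor of the last product. In even characteristic the unique degree-one self-reciprocal polynomial over $\F_q$ is $z-1$, so $M^*(q;1)=(q-2)/2$. If $q=2$ this factor is empty and $\prod_{i\ge1}(1-1/2^{2i-1})<1$ already suffices; if $q\ge4$ then $(1-1/q)^{(q-2)/2}\le e^{-(q-2)/(2q)}\le e^{-1/4}$ while $\prod_{i\ge1}(1-1/q^{2i-1})<1$, so the quantity is at most $e^{-1/4}$.

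For part~(3), write \eqref{evensymprod} as $A\cdot B\cdot C$ with $A=\prod_{i\ge1}(1-1/q^{2i-1})$, $B$ the middle product and $C$ the last product, and bound each away from $0$. Since $\{2i-1:i\ge1\}\subseteq\{i\ge1\}$ and all factors lie in $(0,1)$, $A\ge\prod_{i\ge1}(1-1/q^i)\ge\prod_{i\ge1}(1-1/q^i)^q$, which is bounded away from $0$ by the computation in the proof of part~(3) of \Cref{boun}. For $B$ and $C$, since $t$ is fixed it is enough to bound, for each fixed $j$, the factor $\prod_i(1+(-1)^i/q^{ij})^{N^*(q;2j)}$ and the factor $\prod_i(1-1/q^{ij})^{M^*(q;j)}$ away from $0$; using $\prod_i(1+(-1)^i/q^{ij})>1-1/q^j$ together with the crude bounds $N^*(q;2j),M^*(q;j)\le q^j/(2j)$ from \cite{FNP}, these are at least $(1-1/q^j)^{q^j/2}$ and $\prod_i(1-1/q^{ij})^{q^j/2}$ respectively, both bounded away from $0$ by the elementary estimate $\log(1-x)\ge-x-x^2$ on $[0,1/2]$ used in \Cref{boun}.

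The only step I expect to need genuine care is part~(1): one must check that the degree-$\le t$ truncation of the symplectic cycle index really does factor as $1/(1-u)$ times an everywhere-convergent Euler product, with the characteristic-$2$ features (the single $z-1$ block, and the even-characteristic forms of $N^*$ and $M^*$) threaded through consistently --- the ``minor modification of the methods of \cite{NP}'' alluded to in the text. Parts~(2) and~(3) are then purely elementary, differing from the odd-characteristic argument only in the value of $M^*(q;1)$ and in a single exponent.
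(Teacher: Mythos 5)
Your proposal is correct and takes essentially the same route as the paper, which omits the proof of \Cref{sympeven} entirely on the grounds that it is almost identical to the odd-characteristic case: you reproduce the argument of \Cref{sympodd} with exactly the right even-characteristic modifications (the single self-reciprocal linear factor $z-1$ turning the exponent $2$ on $\prod_{i\ge 1}(1-1/q^{2i-1})$ into $1$, and $M^*(q;1)=(q-2)/2$, with $q=2$ handled separately). Parts (2) and (3) go through with the same elementary estimates used in \Cref{sympodd} and \Cref{boun}, so no further comment is needed.
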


\begin{remark} Using  the explicit formulas for $N^*(q;j)$ and $M^*(q;j)$  it is easy to see that the $q \rightarrow \infty$
limit of \eqref{evensymprod} is equal to \[ e^{-1/2 \left( \sum_{j \leq t/2} 1/j + \sum_{j \leq t} 1/j \right)}, \] the same as in the odd
characteristic case.
\end{remark}

\subsection{$\Or^{\pm}_{2n}(q)$}

\label{subsec:orthogonal_generating_functions}

Neumann and Praeger \cite{NP} calculate the fixed $q$, $n \rightarrow \infty$ limiting proportion of eigenvalue free elements of $\Or^{\pm}_{2n}(q)$.
They showed that the limiting proportion of eigenvalue free elements of $\Or^+_{2n}(q)$ is equal to the limiting proportion of eigenvalue free elements
of $\Or^-_{2n}(q)$, and that both of these proportions are equal to $1/2$ of the corresponding proportions in $\Sp_{2n}(q)$. This holds for both odd and
even characteristic.

One can fix $t$ and look more generally at the elements whose characteristic polynomial has no factors of degree at most $t$ (eigenvalue free is just
the special case $t=1$). Using their methods, one sees that the $n \rightarrow \infty$ limiting proportion of these elements is the same for $\Or^+_{2n}(q)$ and
for $\Or^-_{2n}(q)$, and that these proportions are both $1/2$ of the corresponding symplectic proportions, which we studied in Subsection \ref{symplecticgroups}.

\section{Group theory: preliminaries}
\label{sec:group_theory_preliminaries}

The following will play a key role in the proof of \Cref{t:main}.

\begin{lemma}
\label{l:expectation}
    Let $S$ be a nonabelian finite simple group, and let $x,y\in \Aut(S)$. Let $M$ be a maximal subgroup of $G=\gen{S,x,y}$ and let $A$ be a non-empty $S$-stable subset of $Sy$. Then, the probability that $x$ and a random element of $A$ are contained in a conjugate of $M$ is at most
    \[
    \fpr(x,G/M) \frac{1}{|A|} \sum_{g\in A}\fp(g, G/M).
    \]
\end{lemma}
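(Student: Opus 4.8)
The plan is to pass to the natural transitive action of $G$ on the set $\Omega=G/M$ of right cosets, in which the stabiliser of $Mg$ is the conjugate $M^g=g^{-1}Mg$. Writing $\Omega_g\subseteq\Omega$ for the fixed-point set of $g\in G$, the event ``$x$ and $a$ lie in a common conjugate of $M$'' is literally ``$\Omega_x\cap\Omega_a\neq\emptyset$'', since a coset $Mh$ fixed by both $x$ and $a$ is the same datum as a conjugate $M^h$ containing both. So the first step is the union bound
\[
\Prob_{a\in A}\bigl[\Omega_x\cap\Omega_a\neq\emptyset\bigr]\;\le\;\frac{1}{|A|}\sum_{a\in A}|\Omega_x\cap\Omega_a|,
\]
combined with the two double counts $\sum_{a\in A}|\Omega_x\cap\Omega_a|=\sum_{\omega\in\Omega_x}|A\cap\operatorname{stab}(\omega)|$ and $\sum_{a\in A}\fp(a,G/M)=\sum_{\omega\in\Omega}|A\cap\operatorname{stab}(\omega)|$, both obtained by exchanging the order of summation.

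Next I would split according to whether $MS=G$. Since $M$ is maximal and $S\trianglelefteq G$, the subgroup $MS$ is either $M$ or $G$. Suppose first $MS=M$, i.e.\ $S\le M$. Then the event never happens: if $x,a\in M^h$ for some $a\in A\subseteq Sy$ and $h\in G$, then passing to $\overline G:=G/S$ one gets $\langle\overline x,\overline y\rangle\le\overline M^{\overline h}$, because $\overline a=\overline y$ (as $a\in Sy$) and the image of $M^h$ in $\overline G$ is the conjugate $\overline M^{\overline h}$ of the \emph{proper} subgroup $\overline M=M/S$; this contradicts $\overline G=\langle\overline x,\overline y\rangle$, which holds since $G=\langle S,x,y\rangle$. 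Hence the probability is $0$ and there is nothing to prove.

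In the complementary case $MS=G$, the subgroup $S$ acts transitively on $\Omega$. As $A$ is stable under $S$-conjugation and $\operatorname{stab}(\omega s)$ is an $S$-conjugate of $\operatorname{stab}(\omega)$, the function $\omega\mapsto|A\cap\operatorname{stab}(\omega)|$ is $S$-invariant, hence constant; call its value $c$. The two double counts then read $\sum_{\omega\in\Omega_x}|A\cap\operatorname{stab}(\omega)|=c\,|\Omega_x|$ and $\sum_{a\in A}\fp(a,G/M)=c\,|\Omega|$, so the union bound gives $\Prob\le c\,|\Omega_x|/|A|$, whereas the asserted upper bound equals $\fpr(x,G/M)\cdot c\,|\Omega|/|A|=(|\Omega_x|/|\Omega|)\cdot c\,|\Omega|/|A|=c\,|\Omega_x|/|A|$. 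The two quantities coincide, so the inequality holds (with equality).

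The one thing one has to spot is that the naive union bound alone does not suffice: for a general $S$-stable $A$ the local quantities $|A\cap\operatorname{stab}(\omega)|$ need not be constant on $\Omega$, and their average over $\Omega_x$ could a priori exceed their average over $\Omega$. The resolution — and the only real content of the proof — is the dichotomy above: either the event is vacuous (when $S\le M$), or $S$ is transitive on $G/M$ and the local quantities are automatically constant. I would therefore lead with the case $S\le M$ precisely to isolate this observation.
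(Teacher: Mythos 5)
Your proof is correct and is essentially the paper's argument: the union bound over the cosets fixed by $x$, combined with the constancy of $\omega\mapsto|A\cap\operatorname{stab}(\omega)|$ coming from $MS=G$ and the $S$-stability of $A$, is exactly the paper's computation $\frac{1}{|A|}\fp(x,G/M)\,|A\cap M|=\fpr(x,G/M)\frac{1}{|A|}\sum_{g\in A}\fp(g,G/M)$, which the paper carries out by decomposing $A$ into $S$-classes. Your explicit treatment of the case $S\le M$ merely spells out the paper's opening reduction ``we may assume the probability is nonzero, so $MS=G$.''
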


\begin{proof}
	We may assume that the probability in the statement is nonzero, so $MS=G$ and $S$ acts transitively on the $G$-conjugates of $M$.    Assume $A$ is the disjoint union of the $S$-classes $C_1, \ldots, C_t$, with representatives $x_1, \ldots, x_t$. The probability that $x$ and a random element of $A$ are contained in a conjugate of $M$ is at most
    \begin{align*}
    \frac{1}{|A|}\fp(x,G/M)|A\cap M| &= \frac{1}{|A|}\fpr(x,G/M)|G:M|\sum_{i=1}^t|C_i\cap M| \\
    &=\fpr(x,G/M) \frac{1}{|A|} |G:M|\sum_{i=1}^t|C_i|\fpr(x_i, G/M) \\
    &= \fpr(x,G/M) \frac{1}{|A|} \sum_{g\in A}\fp(g, G/M).
    \end{align*}
This proves the lemma.
\end{proof}

Specializing $A=Sy$ we get the following lemma.

\begin{lemma}
\label{l:fixed_point_ratio}
    Let $S$ be a nonabelian finite simple group, let $x,y\in \Aut(S)$, and let $M$ be a maximal subgroup of $G=\gen{S,x,y}$. Then, the probability that $x$ and a random element of $Sy$ are contained in a conjugate of $M$ is at most $\fpr(x,G/M)$.
\end{lemma}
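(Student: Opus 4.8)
The plan is simply to specialize \Cref{l:expectation} to the choice $A = Sy$. This $A$ is a non-empty $S$-stable subset of $Sy$, so the lemma immediately bounds the probability in question by
\[
\fpr(x,G/M)\cdot\frac{1}{|Sy|}\sum_{g\in Sy}\fp(g,G/M).
\]
Hence everything reduces to showing that the average number of fixed points on $G/M$ of an element of the coset $Sy$ equals $1$.

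To prove this, I would first make the same reduction as at the start of the proof of \Cref{l:expectation}: we may assume the probability is nonzero, and then $MS = G$, so that every $G$-conjugate $K$ of $M$ also satisfies $KS = G$. Next I would switch the order of summation,
\[
\sum_{g\in Sy}\fp(g,G/M) \;=\; \sum_{g\in Sy}\bigl|\{hM\in G/M : h^{-1}gh\in M\}\bigr| \;=\; \sum_{hM\in G/M}\bigl|Sy\cap hMh^{-1}\bigr|,
\]
and evaluate each inner term. Fixing $h$ and writing $K = hMh^{-1}$: since $KS=G$, the subgroup $K$ meets the coset $Sy$, and any non-empty intersection $K\cap Sg$ with $g\in K$ is a right coset of the (normal) subgroup $K\cap S$ of $K$; therefore $|Sy\cap K| = |K\cap S| = |hMh^{-1}\cap S| = |M\cap S|$, the last equality because $S\trianglelefteq G$. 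Thus $\sum_{g\in Sy}\fp(g,G/M) = |G:M|\,|M\cap S|$, and dividing by $|Sy| = |S|$ and using $|M\cap S| = |M|\,|S|/|G|$ (again a consequence of $MS=G$) gives exactly $1$, as needed.

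There is no genuine obstacle here: the computation is just Burnside's orbit-counting identity carried out along a single coset of $S$ rather than over all of $G$, and its only ingredients are the normality of $S$ and the reduction $MS=G$ already available from \Cref{l:expectation}. Equivalently, and perhaps more conceptually, one may observe that $\fp(\cdot,G/M)$ has average equal to the number of $G$-orbits on $G/M$, namely $1$ (by transitivity of $G$ on $G/M$), over all of $G$, and that the displayed computation shows this average is unchanged upon restricting to any single coset of $S$; in particular it is $1$ on $Sy$.
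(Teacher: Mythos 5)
Your proposal is correct and follows essentially the same route as the paper: the paper likewise specializes \Cref{l:expectation} to $A=Sy$ and invokes the standard fact that the average number of fixed points over a coset of a transitive normal subgroup is $1$, which is exactly the identity you verify. The only difference is that you write out the short double-counting proof of that averaging fact (via $|Sy\cap hMh^{-1}|=|M\cap S|$ and $MS=G$), whereas the paper simply recalls it.
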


\begin{proof}
Recall that if $K$ is a finite transitive permutation group and $N$ is a transitive normal subgroup with $K=\gen{N,k}$, then the average number of fixed points in the coset $Nk$ is $1$. Assuming that $MS=G$, we then apply \Cref{l:expectation} to $N=S$, $k=y$ and $K=\gen{S,y}$, acting on the right cosets of $M$ in $G$.
\end{proof}

\subsection{Expected number of fixed points} As already remarked in the introduction, we are concerned with  bounds to the average
\begin{equation}
\label{eq:average_formula}
\frac{1}{|A|} \sum_{g\in A}\fp(g, G/M)
\end{equation}
appearing in \Cref{l:expectation}, in the case where  $S$ is a simple classical group, $M$ is a subspace stabilizer, and $A\subseteq Sy$ for various choices of $A$ and $y\in \Aut(S)$.

More precisely, we will let $A=A(t)$ be the set of elements of $Sy$ fixing no space of dimension at most $t$, and we will compute the asymptotic of \eqref{eq:average_formula} in the regime $q$ and $t$ fixed and $n\to \infty$. In doing so, we will use all results proved in \Cref{sec:generating_functions}, which we summarize in \Cref{l:resume_limit_proportion} below.

In this section all asymptotic symbols are referred to the limit $n\to \infty$. For example, a quantity $o_{q,t}(1)$ is a quantity tending to zero as $n\to\infty$ with $q$ and $t$ fixed.

\subsection{Expectation for symmetric groups}

As a way of motivation, let us begin with the case of symmetric groups. This is not used in the proof of \Cref{t:main}, but it is instructive to first see this case.

Let $1\le t < n/2$, let $A_n(t)$ be the set of elements of $S_n$ fixing no subset of size at most $t$, and let $a_n(t)=|A_n(t)|/n!$. Note that if $t$ is fixed then $a_\infty(t):=\lim_{n\to \infty} a_n(t)$ exists and it belongs to $(0,1)$. (This follows from the well known fact that, for $t$ fixed,  the vector of cycle lengths $(c_1, \ldots, c_t)$ of a random permutation is distributed as $n\to \infty$ as $(X_1, \ldots, X_t)$, the $X_i$ being independent Poisson random variables of parameter $1/i$.) 

\begin{lemma}
\label{l:symmetric_groups}
    Let $1\le t\le k<n/2$ and let $\Omega$ be the set of $k$-subsets of $\{1, \ldots, n\}$. Then
    \[
    \frac{1}{|A|}\sum_{g\in A}\fp(g,\Omega) = a_k(t) + o_t(1).
    \]
This quantity is bounded away from $1$ when $t$ is fixed and $n$ is large.
\end{lemma}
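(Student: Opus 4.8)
The statement concerns the average number of fixed points of elements of $A = A_n(t)$ (permutations fixing no subset of size $\le t$) acting on $\Omega$, the set of $k$-subsets. The first move is to rewrite this average by counting incidences: $\sum_{g \in A}\fp(g,\Omega) = |\{(g,B) : g \in A,\ B \in \Omega,\ gB = B\}|$. Summing instead over $B$ first, the inner count is $|\{g \in A : gB = B\}|$. Now $g$ fixes the $k$-set $B$ setwise iff $g$ preserves the partition $\{1,\dots,n\} = B \sqcup B^c$, i.e. $g \in S_B \times S_{B^c} \cong S_k \times S_{n-k}$. Among such $g$, the condition of lying in $A$ — fixing no subset of size $\le t$ — is almost equivalent to: the restriction $g|_B$ fixes no subset of $B$ of size $\le t$, AND $g|_{B^c}$ fixes no subset of $B^c$ of size $\le t$. (A subset of size $\le t$ fixed by $g$ splits as a fixed subset of $B$ and a fixed subset of $B^c$, each of size $\le t$.) But since $n - k > n/2 > t$... actually one must be slightly careful: the constraint on $g|_{B^c}$ is automatically "most" permutations of $S_{n-k}$ as $n \to \infty$, contributing a factor $a_{n-k}(t) = 1 + o_t(1)$ (using $n - k \to \infty$, which needs $k$ bounded — hmm, $k$ may grow). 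So the second step is to handle the range of $k$ carefully.

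**Key steps.** (1) Incidence-counting swap as above, giving
\[
\frac{1}{|A|}\sum_{g \in A}\fp(g,\Omega) = \frac{|\Omega|}{|A|} \cdot |S_k \times S_{n-k}| \cdot \Prob\big[g \in A \mid g \in S_B \times S_{B^c}\big],
\]
and $\frac{|\Omega| \cdot |S_k \times S_{n-k}|}{n!} = 1$, so the average equals $\frac{n!}{|A|}\cdot\frac{1}{n!}|\{g \in S_k\times S_{n-k} : g \in A\}| = \frac{1}{a_n(t)}\cdot \Prob_{S_k \times S_{n-k}}[\text{fixes no set of size} \le t]$. (2) Factor the event: a permutation of $S_k \times S_{n-k}$ fixes no subset of size $\le t$ iff its $S_k$-part fixes no subset of size $\le \min(t,k)=t$ of $B$ (here $t \le k$ is used) and its $S_{n-k}$-part fixes no subset of size $\le t$ of $B^c$ — with the caveat that a fixed subset of size $0$ (empty set) is always fixed, so "size $\le t$" should be read as "size $\ge 1$ and $\le t$". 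Thus the probability factors as $a_k(t) \cdot a_{n-k}(t)$. (3) Conclude $\frac{1}{|A|}\sum_{g\in A}\fp(g,\Omega) = \frac{a_k(t)\,a_{n-k}(t)}{a_n(t)}$. Since $n, n-k > n/2$, as $n \to \infty$ both $a_n(t) \to a_\infty(t)$ and $a_{n-k}(t) \to a_\infty(t)$ (the latter using $n - k > n/2 \to \infty$), so the ratio is $a_k(t)(1 + o_t(1)) = a_k(t) + o_t(1)$. For the final sentence: $a_k(t) \le a_\infty(t)' $... rather, for $t$ fixed and $k \ge t$, one has $a_k(t) \le 1 - \Prob[\text{fixes a point among first } t] $; more simply $a_k(t) \le a_t(t) \cdot(\text{something}) $ — cleanest is to note $a_k(t) \le 1 - q_0$ where $q_0 > 0$ is a lower bound, uniform in $k \ge t$, for the probability that a random element of $S_k$ has a fixed point of size exactly... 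Actually: $a_k(t) \le \Prob_{S_k}[\text{no fixed point}] = D_k/k! \to 1/e < 1$, and $D_k/k!$ is bounded away from $1$ for all $k \ge 1$; combined with the $o_t(1)$ error this gives the bound away from $1$ for $n$ large.

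**Main obstacle.** The delicate point is the uniformity of the error term $o_t(1)$ over the full range $t \le k < n/2$. When $k$ itself is comparable to $n/2$, I cannot treat $k$ as "fixed," so the clean statement "$a_k(t) \to a_\infty(t)$" is irrelevant — instead I need the \emph{exact} identity $\frac{1}{|A|}\sum_{g\in A}\fp(g,\Omega) = \frac{a_k(t)\,a_{n-k}(t)}{a_n(t)}$ and then only the convergence $a_m(t) \to a_\infty(t)$ for $m \in \{n-k, n\}$, both of which exceed $n/2$ and hence go to infinity. So the error $\frac{a_{n-k}(t)}{a_n(t)} - 1$ is $o_t(1)$ uniformly in $k$, and the $a_k(t)$ factor is carried along exactly with no approximation. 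I would double-check the edge cases in step (2): that no subset of size between $1$ and $t$ straddling $B$ and $B^c$ is being double-counted or missed, and that the empty set is consistently excluded on all three sides ($S_n$, $S_k$, $S_{n-k}$) so the factorization of the "fixes no nonempty set of size $\le t$" event is exact. This combinatorial bookkeeping, though elementary, is where an off-by-one error would creep in.
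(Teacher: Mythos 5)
Your proposal is correct and is essentially the paper's own argument: both reduce to computing $|A\cap M|/|A|$ for the stabilizer $M\cong S_k\times S_{n-k}$ of one $k$-set, factor the condition $g\in A$ into the two blocks to get the exact ratio $a_k(t)a_{n-k}(t)/a_n(t)$, and then use $a_{n-k}(t)/a_n(t)=1+o_t(1)$ (valid uniformly in $k$ since $n-k>n/2$). Your justification of the final sentence via $a_k(t)\le a_k(1)=D_k/k!\le 1/2$ is a slightly more explicit uniform bound than the paper's remark, but it is the same route overall.
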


\begin{proof}

Let $X$ be a $k$-set and let $M\cong S_k\times S_{n-k}$ be the stabilizer of $X$. Then $A\cap M$ consists of the elements $(x,y)$ where $x\in S_k$ and $y\in S_{n-k}$ fix no set of size at most $t$. We get
    \begin{align*}
    \frac{|A\cap M|}{|A|} &= \frac{1}{|A|}a_k(t)k!\cdot a_{n-k}(t)(n-k)! \\
    &= \frac{a_k(t)}{|\Omega|}(1+o_{t}(1)),
    \end{align*}
where we used that $a_{n-k}(t) = a_n(t)(1+o_t(1))$.
Summing over all conjugates of $M$, we get the equality in the statement. For the last sentence in the statement, simply observe that $a_k(t)$ is bounded away from one as $n\to \infty$; this is clear if $k$ is bounded, and if $k$ is large it approaches $a_\infty(t)<1$.
\end{proof}

\subsection{Classical groups: notation and limiting proportions}
\label{subsec:average_classical}
Now we address the case of classical groups. It is convenient to work with the covers that act faithfully on the natural module; let $I$ be one of the symbols $\GL, \Sp, \GU, \Or^\varepsilon$, with $\varepsilon \in \{+,-,\circ\}$, so that $I_n(q)$ is a classical group. When $n$ is odd, we write $I_n(q) = \Or^\circ_n(q)$ or $\Or_n(q)$ according to notational convenience. In this section we simply assume $n\ge 2$ in all cases.

 Let a positive integer $t$ be fixed. We now introduce some convenient notation for the set of elements in a certain coset fixing no subspace of dimension at most $t$, taking care of some obvious restrictions in orthogonal groups.

 \begin{notation}
 \label{notation}
\begin{itemize}[$\diamond$]
	\item (Non-orthogonal groups.) We assume first $I\neq \Or^\varepsilon$, as this case requires somewhat separate notation.	Let $L_n(q)=\SL_n(q), \Sp_n(q), \SU_n(q)$ in the respective cases, and let us fix a coset $C$ of $L_n(q)$ in $I_n(q)$. Let $A_n(q,t,C,I)$ be the set of elements of $C$ fixing no subspace of dimension at most $t$, and set
	\[
	a_n(q,t,C,I):=\frac{|A_n(q,t,C,I)|}{|L_n(q)|}.
	\]
	We also denote by $A_n(q,t,I)$ the set of elements of $I_n(q)$ fixing no subspace of dimension at most $t$, and \[
a_n(q,t,I):= \frac{|A_n(q,t,I)|}{|I_n(q)|}.
\]
	When $\tau$ is the inverse-transpose automorphism of $\GL_n(q)$ and $C=\GL_n(q)\tau$, we let $A_n(q,t,C,\GL)$ be the set of elements $g\tau$ of $C$ such that $(g\tau)^2=gg^\tau$ fixes no subspace of dimension at most $t$ if $n$ is even, and fixes a vector and no other subspace of dimension at most $t$ if $n$ is odd. We set 	\[
	a_n(q,t,C,\GL):=\frac{|A_n(q,t,C,\GL)|}{|\GL_n(q)|}.
	\] 
	\item (Orthogonal groups.) Assume now $I=\Or^\varepsilon$ and set $L_n(q)=\SO^\varepsilon_n(q)$ if $q$ is odd and $L_n(q)=\Omega^\varepsilon_n(q)$ if $q$ is even.  When $n$ is even, we define $A_n(q,t,S,\Or^\varepsilon)$ as the set of elements of $\Or^\varepsilon_n(q)$ fixing no subspace of dimension at most $t$. We also define $A_n(q,t,O,\Or^\varepsilon)$ as the set of elements of $\Or^\varepsilon_n(q)$ acting as a reflection on a nondegenerate $2$-space, and belonging to $A_{n-2}(q,t,S,\Or^{\varepsilon'})$ on the perpendicular complement, where $\varepsilon'$ is determined. Observe that  $A_n(q,t,S,\Or^\varepsilon)\subseteq L_n(q)$ and $A_n(q,t,O,\Or^\varepsilon)\subseteq \Or^\varepsilon_n(q)\sm L_n(q)$. 
	
	When $n$ is odd, we define $A_n(q,t,S,\Or)$ (resp. $A_n(q,t,O,\Or)$) as the set of elements of $\Or_n(q)$ acting trivially (resp. as $-1$) on a nondegenerate $1$-space, and belonging to $A_{n-1}(q,t,S, \Or^{\varepsilon'})$ on the perpendicular complement, where $\varepsilon'$ is determined. Observe that $A_n(q,t,S,\Or)\subseteq L_n(q)$ and $A_n(q,t,O,\Or)\subseteq \Or_n(q)\sm L_n(q)$.
	
	In all cases, we set
 \[
 a_n(q,t,-,\Or^\varepsilon):=\frac{|A_n(q,t,-,\Or^\varepsilon)|}{|L_n(q)|}.
	\]

\end{itemize}
 \end{notation}

\begin{remark}
	\label{rem:omega}
	When $I=\Or^\varepsilon$ and $q$ is odd, the above definitions do not distinguish between the two cosets of $\Omega^\varepsilon_n(q)$ in $\SO^\varepsilon_n(q)$. We will take care of this issue later; see \Cref{subsec:omega}.
\end{remark}

In order to handle the coset $\GL_n(q)\tau$, where $\tau$ is the inverse-transpose automorphism, we will use a result from Fulman--Guralnick \cite{fulman2004guralnick_inverse_tranpose}. This result essentially says that statistics in the coset $\GL_n(q)\tau$ are equal to corresponding statistics in $\Sp_n(q)$ ($n$ even) or $\Sp_{n-1}(q)$ ($n$ odd), and so can be studied ``linearly''. In particular, with our notation they showed the following:

\begin{theorem}\cite[Corollaries 8.4 and 8.5]{fulman2004guralnick_inverse_tranpose}
	\label{t:inverse_transpose_fg}
	Let $C=\GL_n(q)\tau$. We have
	\[
	a_n(q,t,C,\GL) = a_{n-\delta}(q,t,\Sp),
	\]
	where $\delta=0$ if $n$ is even and $\delta=1$ if $n$ is odd.
\end{theorem}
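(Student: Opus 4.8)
The plan is to deduce both cases from an explicit bijection, as in \cite{fulman2004guralnick_inverse_tranpose}, between the coset $C=\GL_n(q)\tau$ and the nondegenerate bilinear forms on $V=\F_q^n$, under which the elements counted by $A_n(q,t,C,\GL)$ are matched with the generic elements of a symplectic group. (In \cite{fulman2004guralnick_inverse_tranpose} this bijection is packaged into a cycle index; for the identity at hand a direct count suffices.) First I would set up the dictionary. Sending $g\tau\in C$ to the bilinear form $B_g(u,v)=u^{T}gv$ is a bijection between $C$ and the set of nondegenerate bilinear forms on $V$; conjugating $g\tau$ by $x\in\GL_n(q)$ replaces $g$ by $xgx^{T}$, so $\GL_n(q)$-conjugacy in $C$ corresponds to congruence of forms, and $(g\tau)^2=gg^{-T}$ is conjugate (via $g$) to $\epsilon^{-1}$, where $\epsilon=\epsilon_{B_g}:=g^{-1}g^{T}$ is the \emph{asymmetry}, determined by $B_g(u,v)=B_g(v,\epsilon u)$. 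Since an invertible operator fixes no subspace of dimension $\le t$ precisely when its characteristic polynomial has no irreducible factor of degree $\le t$ (and fixes a nonzero vector precisely when $z-1$ divides it), the defining condition of $A_n(q,t,C,\GL)$ becomes a condition on the characteristic polynomial of $\epsilon$ alone.

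The structural input is that $\epsilon$ is an isometry of $B_g$ — one checks $\epsilon^{T}g\epsilon=g$ directly — hence also of the alternating form $\beta_g$ with Gram matrix $g-g^{T}=g(1-\epsilon)$, which in characteristic $2$ equals $g(1+\epsilon)$ as well, keeping the argument uniform. Suppose $n$ is even and $g\tau$ is generic; then $1-\epsilon$ is invertible, so $\beta_g$ is nondegenerate symplectic and $\epsilon\in\Sp(\beta_g)$. I would check that $g\tau\mapsto(\beta_g,\epsilon)$ is a bijection from the generic elements of $C$ onto the pairs $(\omega,\epsilon)$ with $\omega$ a nondegenerate symplectic form on $V$ and $\epsilon\in\Sp(\omega)$ of generic type, the inverse sending $(\omega,\epsilon)$ to the matrix $\omega_0(1-\epsilon)^{-1}$ (which one checks has asymmetry $\epsilon$ and antisymmetric part with Gram matrix $\omega_0$), where $\omega_0$ is a Gram matrix of $\omega$. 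As $\GL_n(q)$ is transitive on nondegenerate symplectic forms on $V$ with stabiliser $\Sp_n(q)$, there are $|\GL_n(q)|/|\Sp_n(q)|$ of them, and on each there are $|A_n(q,t,\Sp)|$ generic $\epsilon$'s; summing and dividing by $|\GL_n(q)|$ gives $a_n(q,t,C,\GL)=a_n(q,t,\Sp)$, which is the case $\delta=0$.

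For $n$ odd I would bootstrap from this. If $g\tau$ is generic then $\beta_g$ has corank exactly $1$: its radical is a line $V_1$, on which $B_g$ restricts to a nondegenerate rank-$1$ (hence symmetric) form, and $V=V_1\perp V_1^{\perp}$ with $B_g|_{V_1^{\perp}}$ a generic bilinear form on the $(n-1)$-dimensional space $V_1^{\perp}$. (The $1$-dimensional piece is forced: over $\F_q$ every orthogonally indecomposable bilinear form outside the asymmetry-eigenvalue $1$ has even dimension; concretely, here $(g\tau)^2$ acts as the identity on a nondegenerate line and generically on its complement.) Conversely a line $V_1$, a complement $W$, a nonzero scalar (the rank-$1$ form on $V_1$) and a generic bilinear form on $W$ reassemble such a $g$ uniquely. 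Counting $\tfrac{q^{n}-1}{q-1}$ lines, $q^{n-1}$ complements, $q-1$ scalars and $\tfrac{|\GL_{n-1}(q)|}{|\Sp_{n-1}(q)|}|A_{n-1}(q,t,\Sp)|$ forms on $W$ (by the case $\delta=0$) gives $|A_n(q,t,C,\GL)|=(q^{n}-1)q^{n-1}\cdot\tfrac{|\GL_{n-1}(q)|}{|\Sp_{n-1}(q)|}|A_{n-1}(q,t,\Sp)|$; dividing by $|\GL_n(q)|=(q^{n}-1)q^{n-1}|\GL_{n-1}(q)|$ gives $a_n(q,t,C,\GL)=a_{n-1}(q,t,\Sp)$, the case $\delta=1$.

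The main obstacle is the dictionary step: one must make precise, uniformly in the characteristic and with due care at small $t$, which $g\tau$ are ``generic'' in terms of $\epsilon$ — that the invariant-subspace condition on $(g\tau)^2$ is equivalent to the characteristic polynomial of $\epsilon$ having no irreducible factor of degree $\le t$, apart from the forced simple factor $z-1$ when $n$ is odd — together with the structural facts underlying the reductions (orthogonal decomposability of bilinear forms over $\F_q$, even-dimensionality of the indecomposable blocks away from asymmetry-eigenvalue $\pm1$, and semisimplicity of $\epsilon$ at $\pm1$ in the relevant range); these amount to Wall's classification of bilinear forms over a finite field. Once this is in hand, the two bijections are completely explicit and the remaining counting is elementary.
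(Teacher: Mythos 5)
Your dictionary and your even-dimensional argument are correct: $g\tau\mapsto(g-g^{T},\,\epsilon=g^{-1}g^{T})$ really is a bijection between the elements of $C$ with $1-\epsilon$ invertible and pairs (nondegenerate alternating form, isometry without eigenvalue $1$), the inverse $g=\omega_0(1-\epsilon)^{-1}$ checks out, and the resulting exact identity $a_n(q,t,C,\GL)=a_n(q,t,\Sp)$ for $n$ even is proved cleanly. (The paper gives no proof of this theorem -- it is quoted from Fulman--Guralnick -- and your bilinear-form/asymmetry dictionary is the same circle of ideas, done as a direct count rather than through their cycle-index machinery.)

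The gap is in the odd case, exactly at the point you deferred to Wall's classification. You claim that for generic $g\tau$ the radical line $V_1$ of the alternating part carries a nondegenerate rank-one form and $V=V_1\perp V_1^{\perp}$, i.e.\ that the eigenvalue-$1$ part of the asymmetry is a nondegenerate line. This is false for $t=1$ under the definition of $A_n(q,t,C,\GL)$ in \Cref{notation}: take
\[
g=\begin{pmatrix}0&0&-1\\0&1&-1\\-1&0&0\end{pmatrix},\qquad
\epsilon=g^{-1}g^{T}=\begin{pmatrix}1&1&0\\0&1&1\\0&0&1\end{pmatrix},
\]
valid over any $\F_q$ (here $\det g=-1$). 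Then $(g\tau)^2$ is conjugate to $\epsilon^{-1}$, a regular unipotent element: it fixes a vector and stabilizes no line other than its fixed line, so $g\tau\in A_3(q,1,C,\GL)$; but $g-g^{T}$ has radical $\langle e_1\rangle$ with $B_g(e_1,e_1)=0$, so $V_1\subseteq V_1^{\perp}$ and there is no orthogonal splitting -- this element (and its orthogonal sums with eigenvalue-free forms, for every odd $n\ge 3$) is missed by your reassembly, so as written your count only gives $a_n(q,1,C,\GL)\ge a_{n-1}(q,1,\Sp)$. The source of the problem is that at asymmetry-eigenvalue $1$ there do exist odd-dimensional indecomposable forms of dimension $\ge 3$ (unlike at the other eigenvalues), and the invariant-subspace hypothesis with $t=1$ does not exclude them. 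For $t\ge 2$ your claim can be salvaged: a unipotent Jordan block of $\epsilon$ of size $\ge 2$ gives $(g\tau)^2$ an invariant $2$-space, and two or more unipotent blocks give a fixed space of dimension $\ge 2$, hence more than one invariant line; so the eigenvalue-$1$ primary component is forced to be a nondegenerate line and your decomposition and count go through. To get $t=1$ as well one must phrase (or interpret) the defining condition through the characteristic polynomial of $(g\tau)^2$ -- the factor $z-1$ occurring with multiplicity exactly one and no other factor of degree at most $t$ -- which is the form in which the cited Corollaries 8.4--8.5 of Fulman--Guralnick control the coset; your proof needs to either add this reduction or restrict the structural claim accordingly.
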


In orthogonal groups, we also have that the proportion $a_n(q,t,O,\Or^\varepsilon)$ can be expressed in terms of $a_{n-\delta}(q,t,S,\Or^\varepsilon)$, where $\delta=(2,n)$.

\begin{lemma}
	\label{l:orthogonal_calculation}
	Assume $n\ge 5$ and $\varepsilon \in \{+,-,\circ\}$. Then
	\[
	a_n(q,t,O,\Or^\varepsilon)=\frac{a_{n-\delta}(q,t,S,\Or^+)+a_{n-\delta}(q,t,S,\Or^-)}{2}
	\]
	where $\delta=(2,n)$.
\end{lemma}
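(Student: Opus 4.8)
The plan is to condition on the structure of an element of $A_n(q,t,O,\Or^\varepsilon)$. By definition, such an element $g$ decomposes the natural module $V$ of dimension $n$ as $V = W \perp W^\perp$, where $W$ is a nondegenerate space of dimension $\delta = (2,n)$ on which $g$ acts as a reflection (if $n$ is even, $\delta=2$) or as $-1$ (if $n$ is odd, $\delta=1$), and $g$ restricted to $W^\perp$ lies in $A_{n-\delta}(q,t,S,\Or^{\varepsilon'})$, where $\varepsilon'$ is the sign making the form on $W^\perp$ of the correct type (so $\varepsilon' \varepsilon'' = \varepsilon$ where $\varepsilon''$ is the type of the form on $W$, recorded by a fixed small table). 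First I would fix a nondegenerate subspace $W$ of dimension $\delta$ of the appropriate type: for $n$ odd there is a unique type of nondegenerate $1$-space up to the relevant equivalence, while for $n$ even a reflection on a nondegenerate $2$-space forces that $2$-space to be of a specific type (determined by whether $-1$ is a square and by $\varepsilon$), and as the $2$-space varies over the two types the complementary type $\varepsilon'$ ranges over $\{+,-\}$, each occurring; this is exactly the source of the average of the two signs in the statement.

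Next I would count. Write $N_\varepsilon^\pm$ for the number of nondegenerate $\delta$-spaces $W \subseteq V$ whose perpendicular complement has orthogonal type $\pm$; by the standard subspace-counting formulas in orthogonal geometry (see e.g. \cite{NP} or the companion references on $\Or^\pm_{2n}(q)$), both $N_\varepsilon^+$ and $N_\varepsilon^-$ are explicit rational functions of $q$ and $n$. For each such $W$, the elements of $A_n(q,t,O,\Or^\varepsilon)$ inducing the prescribed reflection/$-1$ on $W$ are in bijection with $A_{n-\delta}(q,t,S,\Or^{\varepsilon'})$ via restriction to $W^\perp$ (the action on $W$ being rigidly prescribed, there is no stabilizer contribution beyond this). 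Summing over $W$ of each type, one obtains
\[
|A_n(q,t,O,\Or^\varepsilon)| = N_\varepsilon^+\,|A_{n-\delta}(q,t,S,\Or^+)| + N_\varepsilon^-\,|A_{n-\delta}(q,t,S,\Or^-)|.
\]
Dividing by $|L_n(q)|$ and using the orbit--stabilizer relation $|\Or^\varepsilon_n(q)| = N_\varepsilon^\pm \cdot |\Or^{\varepsilon''}_\delta(q)| \cdot |\Or^\pm_{n-\delta}(q)|$ together with the definition $a_{n-\delta}(q,t,S,\Or^\pm) = |A_{n-\delta}(q,t,S,\Or^\pm)|/|L_{n-\delta}(q)|$, the counting constants $N_\varepsilon^\pm$ collapse to the ratios $|L_n(q)|/(|\Or^{\varepsilon''}_\delta(q)|\,|L_{n-\delta}(q)|)$, which one checks equals $1/2$ in each of the relevant cases (the factor $2$ coming from the index of $\Omega$ in $\SO$, or $\Omega$ in $\Or$ in even characteristic, reconciled with $|\Or^{\varepsilon''}_\delta(q)|$ on the small space). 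This yields the claimed identity once one verifies that $N_\varepsilon^+$ and $N_\varepsilon^-$, after this normalization, contribute with equal weight $\tfrac12$.

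The main obstacle I anticipate is purely bookkeeping: correctly tracking the orthogonal types $\varepsilon''$ of the small $\delta$-space and $\varepsilon'$ of its complement across all cases ($n$ even vs.\ odd, $q$ even vs.\ odd, and the discriminant/square-class subtleties when $q$ is odd and $\delta=2$, where whether $-1$ is a square in $\F_q$ affects which $2$-spaces admit a reflection), and confirming that in every case the two complementary types occur in exactly the proportions that produce the unweighted average. The hypothesis $n \ge 5$ is what guarantees $n - \delta \ge 3$, so that the complement $W^\perp$ is large enough for the formulas of \Cref{sec:generating_functions} (and the nondegenerate subspace counts) to apply without degenerate exceptions. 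None of the analytic input from \Cref{sec:generating_functions} is needed here; this lemma is a finite-geometry identity relating the ``$O$-type'' coset count to the ``$S$-type'' counts on a space of dimension smaller by $(2,n)$, exactly parallel to \Cref{t:inverse_transpose_fg} for the inverse-transpose coset.
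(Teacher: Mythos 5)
Your overall strategy coincides with the paper's: fix a nondegenerate $\delta$-space $W$, count the elements of $A_n(q,t,O,\Or^\varepsilon)$ inducing the prescribed action on $W$ and lying in $A_{n-\delta}(q,t,S,\Or^{\pm})$ on $W^\perp$, sum over the $W$ of each type, and normalize via orbit--stabilizer. However, there is a concrete error in the counting step when $n$ is even. The elements inducing a reflection on a fixed nondegenerate $2$-space $W$ are \emph{not} in bijection with $A_{n-2}(q,t,S,\Or^{\varepsilon'})$: the reflection on $W$ is not ``rigidly prescribed'', since $\Or^+_2(q)$ contains $q-1$ reflections and $\Or^-_2(q)$ contains $q+1$. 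The correct count is
\[
|A_n(q,t,O,\Or^\varepsilon)| \;=\; N^+_\varepsilon\,(q-1)\,|A_{n-2}(q,t,S,\Or^+)| \;+\; N^-_\varepsilon\,(q+1)\,|A_{n-2}(q,t,S,\Or^-)|,
\]
and the factors $q\mp 1$ are exactly what cancels against the stabilizer contribution $|\Or^{\pm}_2(q)|=2(q\mp 1)$ to produce the two equal weights $\tfrac12$ (this is precisely how the paper's computation goes). With your formula as written, the weights come out as $1/(2(q-1))$ and $1/(2(q+1))$, which are unequal and not $\tfrac12$, so the deferred verification ``one checks equals $1/2$'' would fail. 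By contrast, in the $n$ odd case your rigidity claim is correct (acting as $-1$ on a nondegenerate $1$-space is a single choice), and there the single choice against $|\Or_1(q)|=2$ does give $\tfrac12$.

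A secondary slip: the parenthetical assertion that a reflection forces the $2$-space to be of a specific type (depending on whether $-1$ is a square and on $\varepsilon$) is false, and in fact contradicts your next clause. Nondegenerate $2$-spaces of both types admit reflections, and both types must contribute — this is the source of the two terms being averaged. Once these two points are repaired (include the reflection counts $q\mp1$, and let $W$ range over both types with $\varepsilon'$ determined by $\varepsilon'\varepsilon''=\varepsilon$), the rest of your plan — the orbit--stabilizer normalization, the remark that no analytic input from the generating-function section is needed, and the role of $n\ge 5$ — matches the paper's short computation.
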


\begin{proof}
	Let us begin with the case $n$ even and $\varepsilon =+$. Set $L^\pm_n(q)=\SO^\pm_n(q)$ if $q$ is odd and  $L^\pm_n(q)=\Omega^\pm_n(q)$ if $q$ is even. Then 
\begin{align*}
	A_n(q,t,O,\Or^+)&=\sum_{W+} (q-1)a_{n-2}(q,t,S,\Or^+)|L^+_{n-2}(q)| + \sum_{W-} (q+1)a_{n-2}(q,t,S,\Or^-)|L^-_{n-2}(q)| \\ 
	&=|L^+_n(q)|\cdot \frac{a_{n-2}(q,t,S,\Or^+)+a_{n-2}(q,t,S,\Or^-)}{2},
\end{align*}
	where the first sum ranges over all nondegenerate $2$-spaces of plus type, and similarly the second. If $\varepsilon =-$ we get the same result by swapping signs suitably, and the case $n$ odd is very similar.
\end{proof}

We now record the results proved in \Cref{sec:generating_functions}, which will be crucial in the proof of \Cref{t:main}.

\begin{theorem}
\label{l:resume_limit_proportion}
    For $I\neq \Or^\varepsilon$, let $C$ be any coset of $L_n(q)$ in $I_n(q)$, or $C=\GL_n(q)\tau$ when $I=\GL$. For $I=\Or^\varepsilon$ let $C\in \{S,O\}$. Then
    \[
    a_\infty(q,t,C,I):=\lim_{n\to \infty} a_n(q,t,C,I)
    \]
    exists and belongs to $(0,1)$. Moreover, when $I\neq \Or^\varepsilon$ and $C$ is a coset of $L_n(q)$ in $I_n(q)$, for every $I$ the limit is independent of $C$, and finally $a_\infty(q,t,C,\Or^\varepsilon)=a_\infty(q,t,C',\Or^{\varepsilon'})$ for every $\varepsilon, \varepsilon'$ and for every $C,C'\in \{S,O\}$.
\end{theorem}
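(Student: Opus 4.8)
The plan is to reduce \Cref{l:resume_limit_proportion} to the generating-function computations of \Cref{sec:generating_functions} (Theorems~\ref{slcoset}, \ref{sympodd}, \ref{sympeven} and the orthogonal discussion in \Cref{subsec:orthogonal_generating_functions}), together with \Cref{t:inverse_transpose_fg} and \Cref{l:orthogonal_calculation}, by matching ``fixing no subspace of dimension at most $t$'' with ``characteristic polynomial has no irreducible factor of degree at most $t$.'' First I would record this translation: for $g\in\GL_n(q)$ the fixed spaces of dimension $\le t$ correspond, via rational canonical form, to sums of generalized eigenspaces attached to irreducible factors of degree $\le t$ of the characteristic polynomial (the eigenvalue $1$ giving genuine fixed vectors, but more relevantly here one works with $\det(g-\lambda)$; what matters is that $g$ fixes \emph{some} nonzero subspace of dimension $\le t$ iff its characteristic polynomial has an irreducible factor of degree $\le t$). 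The same dictionary, with the appropriate self-duality conditions, holds in $\Sp_n(q)$ and $\Or^\varepsilon_n(q)$ because a nondegenerate-or-totally-singular invariant subspace decomposition is governed by the factorization type; eigenvalue-free elements in the classical-group sense are exactly those with no degree-$1$ factor. This is why $A_n(q,t,C,I)$ as defined in \Cref{notation} is counted by the generating functions of \Cref{sec:generating_functions}.

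Granting the dictionary, the existence of $a_\infty(q,t,C,I)$ in $(0,1)$ is then exactly what was proved: for $I=\GL$ and $C$ a coset of $\SL_n(q)$, \Cref{slcoset} gives the limit \eqref{produ} (independent of $C$), and \Cref{boun} shows it lies strictly between $0$ and $1$; for $C=\GL_n(q)\tau$, \Cref{t:inverse_transpose_fg} identifies $a_n(q,t,C,\GL)$ with $a_{n-\delta}(q,t,\Sp)$, so the symplectic case covers it. For $I=\Sp$, Theorems~\ref{sympodd} and \ref{sympeven} give the limit \eqref{oddsymprod}/\eqref{evensymprod} and its two-sided bound away from $0$ and $1$ (and the symplectic group has a single coset to consider since $\Sp_n(q)=L_n(q)$ here, so $C$-independence is vacuous). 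For $I=\GU$, the unitary analogue needed is \Cref{ucoset} together with the remark following it asserting the bounds away from $0$ and $1$ (the paper's appendix supplies the $C$-independence, by the same ``dominant term $\omega=1$'' argument as in \Cref{slcoset}). For $I=\Or^\varepsilon$ and $C=S$: the discussion in \Cref{subsec:orthogonal_generating_functions} says the $n\to\infty$ limiting proportion of elements with no characteristic-polynomial factor of degree $\le t$ is the same for $\Or^+_{2n}(q)$ and $\Or^-_{2n}(q)$ and equals half the symplectic proportion, which is in $(0,1)$ by Theorems~\ref{sympodd}--\ref{sympeven}; this already gives $a_\infty(q,t,S,\Or^+)=a_\infty(q,t,S,\Or^-)\in(0,1)$, and the odd-$n$ case reduces to even $n-1$. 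For $C=O$: by \Cref{l:orthogonal_calculation}, $a_n(q,t,O,\Or^\varepsilon)=\tfrac12\bigl(a_{n-\delta}(q,t,S,\Or^+)+a_{n-\delta}(q,t,S,\Or^-)\bigr)$, so $a_\infty(q,t,O,\Or^\varepsilon)=a_\infty(q,t,S,\Or^+)$ (using that the two $S$-limits coincide), establishing the last claimed equality $a_\infty(q,t,C,\Or^\varepsilon)=a_\infty(q,t,C',\Or^{\varepsilon'})$ for all $\varepsilon,\varepsilon'$ and $C,C'\in\{S,O\}$ at once.

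Two bookkeeping points will need care, and I expect the matching of ``fixes no subspace of dimension $\le t$'' to the generating-function statistic — rather than merely ``fixes no vector'' — to be the main obstacle, since the generating functions as stated track only the factorization type of the characteristic polynomial, whereas invariance of a subspace of dimension $k\le t$ can in principle involve a sub-sum of a generalized eigenspace (e.g. a proper invariant subspace inside a large Jordan block). The resolution is that if $g$ has no irreducible factor of degree $\le t$ in its characteristic polynomial then every nonzero $g$-invariant subspace has dimension $\ge t+1$ (each invariant subspace is a sum of pieces each of whose dimension is a multiple of some $\deg\phi>t$), so the two conditions are genuinely equivalent for the sets in \Cref{notation}; conversely if there is such a factor $\phi$, the companion block of $\phi$ gives an invariant subspace of dimension $\deg\phi\le t$. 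The second point is the minor normalization: $a_n$ is defined dividing by $|L_n(q)|$ rather than $|I_n(q)|$, so for a coset $C$ of $L_n(q)$ in $I_n(q)$ one has $a_n(q,t,C,I)=[I_n(q):L_n(q)]\cdot(\text{proportion in }C)$ and the index is bounded ($q-1$, $(2,q-1)$, etc.), which does not affect existence of the limit but is where the precise constant in \eqref{produ} versus \eqref{oddsymprod} gets its exponent; I would simply note the index is constant in $n$ and invoke the cited theorems verbatim. With these observations in place the proof is just a case-by-case citation, so I would present it as: ``This collects Theorems~\ref{slcoset}, \ref{sympodd}, \ref{sympeven}, \ref{ucoset}, \Cref{t:inverse_transpose_fg}, \Cref{l:orthogonal_calculation}, and the discussion of \Cref{subsec:orthogonal_generating_functions}, via the translation between invariant subspaces of dimension $\le t$ and characteristic-polynomial factors of degree $\le t$ explained above.''
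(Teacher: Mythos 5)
Your proposal is correct and follows essentially the same route as the paper: the paper's proof consists of exactly the observation that $g$ fixes a $k$-space if and only if its characteristic polynomial has a factor of degree $k$, followed by citations of Theorems~\ref{slcoset}, \ref{boun}, \ref{ucoset}, \ref{sympodd}, \ref{sympeven}, the discussion in \Cref{subsec:orthogonal_generating_functions}, \Cref{l:orthogonal_calculation} for $C=O$, and \Cref{t:inverse_transpose_fg} for the coset $\GL_n(q)\tau$. Your extra care about the subspace/characteristic-polynomial dictionary and the normalization by $|L_n(q)|$ simply spells out details the paper leaves implicit.
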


\begin{proof}
Let us begin with the case where $I\neq \Or^\varepsilon$ and $C$ is a coset of $L_n(q)$ in $I_n(q)$, or $I=\Or^\varepsilon$ with $n$ even and $C=S$. Noting the simple fact that $g\in \GL_n(q)$ fixes a $k$-space if and only if the characteristic polynomial of $g$ has a factor of degree $k$, the statement follows then from \Cref{slcoset,boun} (in case $I=\GL$), \Cref{ucoset} (in case $I=\GU$), \Cref{sympodd,sympeven} (in case $I=\Sp$ and $I=\Or^\varepsilon$, cf. \Cref{subsec:orthogonal_generating_functions}).

The case $I=\Or^\varepsilon$ and $C=O$ follows from the previous paragraph and \Cref{l:orthogonal_calculation}. Finally, the case  $C=\GL_n(q)\tau$ follows from the previous paragraph and \Cref{t:inverse_transpose_fg}. 
	\end{proof}

 \begin{remark}
     As already remarked in \Cref{cosetsofu}, in the proof of \Cref{t:main} for unitary groups we will not use \Cref{l:resume_limit_proportion}. Nonetheless, we include this case in \Cref{l:expectation_eigenvalue_free}, below, since the  result is interesting and the proof is the same as in the other cases.
 \end{remark}

\subsection{Expectation for classical groups}
 We now present analogues of \Cref{l:symmetric_groups} for classical groups. In the interest of clarity, we prefer to handle separately, in \Cref{l:expectation_orthogonal_groups} below, orthogonal groups.
 
For $I=\GL$ we view a $k$-space as a totally singular $k$-space. We exclude from the following lemma the case of totally singular $k$-spaces where $n/2-k$ is bounded; see \Cref{rem:totally_singular_large} for an explanation.

\begin{lemma}
\label{l:expectation_eigenvalue_free}
Let $I\neq \Or^\varepsilon$ and let $C$ be a coset of $L_n(q)$ in $I_n(q)$. Let $1\le t< k < n/2$, let $\Omega$ the set of totally singular or nondegenerate $k$-spaces, and let $W\in \Omega$. We assume that if $W$ is totally singular then $n/2-k\to \infty$ as $n\to \infty$. Then, setting $A:=A_n(q,t,C,I)$, we have
     \[
    \frac{1}{|A|}\sum_{g\in A} \fp(g,\Omega) = \ a_k(q^u,t,X) +o_{q,t}(1)
    \]
where 
\begin{itemize}[$\diamond$]
\item if $I=\GU$ and $W$ is totally singular then $u=2$, and otherwise $u=1$;
    \item if $W$ is totally singular then $X=\GL$, and if $W$ is nondegenerate then $X=I$.
\end{itemize}
In all cases, $a_k(q^u,t,X) +o_{q,t}(1)$ is bounded away from $1$ when $q$ and $t$ are fixed and $n$ is large.

\end{lemma}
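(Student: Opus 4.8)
The plan is to mimic the proof of \Cref{l:symmetric_groups}, replacing the combinatorial factorizations by the corresponding group-theoretic ones. Fix $W\in\Omega$ and let $M$ be its stabilizer in $I_n(q)$. The first step is to describe $A\cap M$ explicitly. An element $g\in I_n(q)$ fixes $W$, and fixes no subspace of dimension at most $t$ overall, if and only if $g$ stabilizes $W$ and induces on $W$ (or, when $W$ is nondegenerate, on $W$ and on $W^\perp$) an element fixing no subspace of dimension at most $t$, while also fixing no ``mixed'' small subspace; the point is that once $g$ fixes no small subspace of $W$ and (in the nondegenerate case) none of $W^\perp$, it automatically fixes no small subspace of the whole space, because a $g$-invariant subspace $U$ with $\dim U\le t$ would have $U\cap W$ and the image of $U$ in $V/W$ (or $U\cap W^\perp$) both $g$-invariant of dimension $\le t$, forcing $U=0$. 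So in the totally singular case, writing the stabilizer of $W$ as (unipotent radical)$\,\rtimes\,(\GL_k(q^u)\times I')$ acting on $V/W^\perp\oplus W^\perp/W$, the condition is that the $\GL_k(q^u)$-component lies in $A_k(q^u,t,C',\GL)$ for the appropriate coset $C'$; in the nondegenerate case the stabilizer is $I_k(q)\times I_{n-k}(q)$ and the condition is a product condition, one factor in $A_k(q,t,C_1,I)$ and the other in $A_{n-k}(q,t,C_2,I)$, where the cosets $C_1,C_2$ are determined by $C$ (so that $C_1 C_2 = C$). The subtlety is the bookkeeping of which cosets of $L$ appear; one uses that the limiting proportion $a_\infty(q^u,t,C',X)$ is, by \Cref{l:resume_limit_proportion}, independent of the coset, so the dependence on $C_1,C_2$ disappears in the limit.

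The second step is the counting. By \Cref{l:expectation}-style reasoning, $\frac{1}{|A|}\sum_{g\in A}\fp(g,\Omega) = \frac{|\Omega|\,|A\cap M|}{|A|}$. Now $|A| = a_n(q,t,C,I)\,|L_n(q)|$ and, from the structural description, $|A\cap M|$ factors as (size of the ``uninteresting'' part of $M$) times $a_k(q^u,t,C',X)\,|\text{appropriate }\GL_k\text{ or }L_k|$ times $a_{n-k'}(q,t,C'',I)\,|L_{n-k'}(q)|$ with $k'\in\{k, n/2\cdot(\dots)\}$ as appropriate. Dividing, almost all the order factors cancel against $|\Omega|$ and $|L_n(q)|$ by the standard orbit-counting/order formulas for classical groups, leaving $a_k(q^u,t,C',X)\cdot\frac{a_{n-k'}(q,t,C'',I)}{a_n(q,t,C,I)}\cdot(1+o_{q,t}(1))$. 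The key input is then that $a_{n-k'}(q,t,C'',I) = a_n(q,t,C,I)(1+o_{q,t}(1))$ as $n\to\infty$, which follows from \Cref{l:resume_limit_proportion}: both numerator and denominator converge to the same limit $a_\infty(q,t,I)\in(0,1)$, the limit being independent of the coset, and this limit is bounded away from $0$ so the ratio is well-controlled. (This is exactly where the hypothesis ``$n/2-k\to\infty$ if $W$ is totally singular'' is used: in the totally singular case the complementary dimension is roughly $n/2-k$, and we need this to tend to infinity for the asymptotic $a_{n/2-k}\to a_\infty$ to apply; compare \Cref{rem:totally_singular_large}.) Then replacing $a_k(q^u,t,C',X)$ by $a_k(q^u,t,X)$ as in the statement is just the coset-independence of the limit again, absorbing any discrepancy for bounded $k$ into the $o_{q,t}(1)$ — here one should note that for bounded $k$ one has exact equality of limiting proportions across cosets only in the limit, but since $k$ bounded means $a_k$ is literally a fixed rational number for each $n$ and $a_n(q,t,C,I)$ converges, the difference quotient still behaves, so one may need to treat ``$k$ bounded'' and ``$k\to\infty$'' separately exactly as in \Cref{l:symmetric_groups}.

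For the final sentence, that $a_k(q^u,t,X)+o_{q,t}(1)$ is bounded away from $1$: this is immediate from \Cref{l:resume_limit_proportion} and \Cref{boun} (and the symplectic/orthogonal analogues), since $a_k(q^u,t,X)\le \sup$ over all these limiting proportions, which is bounded away from $1$ uniformly — if $k$ is bounded, $a_k(q^u,t,X)$ is a proportion in a finite group which one checks directly is $<1$ minus a constant (the proportion of elements fixing some small space is bounded below, e.g.\ by the proportion fixing a $1$-space, which is $\gg 1$), and if $k\to\infty$ it approaches $a_\infty(q^u,t,X)$, bounded away from $1$ by Propositions \ref{boun} and its analogues.

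I expect the main obstacle to be the first step: getting the coset bookkeeping right, i.e.\ verifying that when one restricts an element of the fixed coset $C$ of $L_n(q)$ to the Levi factor of a parabolic, or to the two factors of $I_k\times I_{n-k}$, the induced elements land in cosets $C'$, $C''$ of the relevant special subgroups in a way that is (a) consistent (their ``product'' is $C$) and (b) irrelevant in the limit. The orthogonal groups and the $\GL_n(q)\tau$ coset are deferred (to \Cref{l:expectation_orthogonal_groups} and via \Cref{t:inverse_transpose_fg}), which removes the most annoying cases, but one still has to be careful with $\GU$ and totally singular spaces (where the field grows to $q^2$, hence the parameter $u=2$) and with the determinant/spinor-norm constraints defining $L_n(q)$ inside $I_n(q)$. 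Everything else is a routine — if slightly tedious — cancellation of orders of classical groups against sizes of spaces of totally singular or nondegenerate subspaces.
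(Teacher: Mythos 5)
Your overall strategy coincides with the paper's: fix the stabilizer $M$ of $W$, factor $A\cap M$ through the Levi (totally singular case) or direct-factor (nondegenerate case) structure, compute $|\Omega|\,|A\cap M|/|A|$, and control the large factor via \Cref{l:resume_limit_proportion} together with the hypothesis $n-2k\to\infty$ (resp.\ $n-k>n/2$). However, the coset bookkeeping --- which you yourself flag as the main obstacle --- is handled incorrectly, and the repair you propose would fail. The point is that the small factor is \emph{not} confined to a coset ``determined by $C$'': for an element $(u,x,y)\in M\cap C$, the component $x$ ranges over \emph{all} of $\GL_k(q^u)$ (resp.\ $I_k(q)$), and it is the coset of the \emph{large} factor $I_{n-2k}(q)$ (resp.\ $I_{n-k}(q)$) that is determined, by $x$ and $C$. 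Coset-independence from \Cref{l:resume_limit_proportion} is then applied only to that large factor, whose dimension tends to infinity, so the dependence on the determined coset is $o_{q,t}(1)$; summing over $x$ in the full group reconstitutes exactly the full-group proportion $a_k(q^u,t,X)$ of the statement. Your version instead fixes cosets $C',C''$ with $C'C''=C$, obtains $a_k(q^u,t,C',X)$, and then replaces it by $a_k(q^u,t,X)$ ``by coset-independence of the limit, absorbing any discrepancy for bounded $k$ into the $o_{q,t}(1)$''. This fails precisely when $k$ is bounded (which is allowed: $k$ can be as small as $t+1$): at a fixed dimension the proportion in one coset and in the full group can differ by a constant, and no limiting statement applies. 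Moreover, with the single-coset description the order count is off by the index $|I_k(q):L_k(q)|$: the correct count of admissible small-factor components is $a_k(q,t,I)\,|I_k(q)|=\sum_{C'}a_k(q,t,C',I)\,|L_k(q)|$, not $a_k(q,t,C',I)\,|L_k(q)|$, so your claimed cancellation against $|\Omega|$ does not close. Treating ``$k$ bounded'' separately, as you suggest, does not fix this; the fix is the redistribution of the coset constraint onto the large factor, which is exactly what the paper does.

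Two smaller points. In the totally singular case your structural description of $A\cap M$ only imposes the condition on the $\GL_k(q^u)$-component, whereas the condition on the complementary component (acting on $W^\perp/W$) is also needed; your subsequent count does include it, so this is only an inconsistency of the write-up. Also, your justification that no ``mixed'' small subspace can occur is incomplete: in the nondegenerate case $U\cap W=U\cap W^\perp=0$ does not by itself force $U=0$ --- one should project $U$ to $W$, obtaining a $g$-invariant subspace of dimension at most $t$; and in the totally singular case the equivalence needs a short extra argument (duality preserves degrees of irreducible factors, handling $V/W^\perp$, and a coprimality/splitting argument shows that a small invariant subspace of $W^\perp/W$ lifts to a small invariant subspace of $V$). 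These are easily repaired; the coset issue above is the substantive gap.
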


\begin{proof}
Let us begin with the case where $W$ is totally singular. Let $M$ be the stabilizer in $I_n(q)$ of $W$, so $M=U\rtimes (\GL_k(q^u)\times I_{n-2k}(q))$, where $U\le L_n(q)$. Then $A\cap M$ consists of the elements $(u,x,y)$ with $u\in U$, $x\in \GL_k(q^u)$ fixes no space of dimension at most $t$, and $y\in I_{n-2k}(q)$ fixes no space of dimension at most $t$ and belongs to a coset of $L_{n-2k}(q)$ in $I_{n-2k}(q)$ determined by $x$. By \Cref{l:resume_limit_proportion} the proportion of such elements is independent of the coset up to $o_{q,t}(1)$. Moreover, by assumption $n-2k \to \infty$ and so we get 
    \begin{align*}
    \frac{|A\cap M|}{|A|} &= \frac{1}{|A|}a_k(q^u,t,\GL)|\GL_k(q^u)|\cdot a_{n-2k}(q,t,I)|L_{n-2k}(q)|(1+o_{q,t}(1))\cdot |U| \\
    &= \frac{a_k(q^u,t, \GL)}{|\Omega|}(1+o_{q,t}(1)).
    \end{align*}
Summing over all conjugates of $M$, we get the equality in the statement, and the last sentence holds as in \Cref{l:symmetric_groups}.

The case of nondegenerate subspaces is entirely analogous. In this case $n-k>n/2$ and so we do not need any assumption on $k$.
\end{proof}

\begin{remark}
	\label{rem:totally_singular_large}
	 We excluded from \Cref{l:expectation_eigenvalue_free} the case of totally singular $k$-spaces with $n/2-k$ bounded. In this case, the same proof gives that the expectation is bounded, and very likely an analysis of the limits $a_\infty(q,t,C,I)$ in \Cref{l:resume_limit_proportion} will show that the expectation is less than $1$ also in this case. We do not pursue this, as we do not need it for the proof of \Cref{t:main}. 
\end{remark}

Let us now move to orthogonal groups. Note that, when $n$ is even, no element of $A_n(q,t,C,\Or^\varepsilon)$ ($C\in \{S,O\}$) fixes a nondegenerate space of odd dimension, so we exclude this case from the statement. We also assume $t\ge 2$ in order to get a cleaner estimate -- this ensures that elements of $A_n(q,t,C,\Or^\varepsilon)$ fix at most $1$-space and at most one $2$-space.

\begin{lemma}
	\label{l:expectation_orthogonal_groups}
  Let $2\le t\le k \le n/2$, let $C\in \{S,O\}$,  let $\Omega$ be an $\Or^{\varepsilon}_n(q)$-orbit on totally singular or nondegenerate $k$-spaces, and let $W\in \Omega$.  We assume that if $W$ is totally singular then $n/2-k\to \infty$ as $n\to \infty$, and that if $W$ is nondegenerate and $n$ is even then $k$ is even. Then setting $A:=A_n(q,t,C,\Or^\varepsilon)$, we have
     \[
    \frac{1}{|A|}\sum_{g\in A} \fp(g,\Omega) = f +o_{q,t}(1)
    \]
where
\begin{itemize}[$\diamond$] 
    \item if $W$ is totally singular then $f= a_k(q,t,\GL)$;
    \item if $n$ is even, $C=S$ (resp. $C=O$) and $W$ is nondegenerate of type $\varepsilon'$ then  $f=\frac{1}{2}a_k(q,t,S,\Or^{\varepsilon'})$ (resp. $f=\frac{1}{2}a_k(q,t,S,\Or^{\varepsilon'}) + \frac{1}{2}a_k(q,t,O,\Or^{\varepsilon'})$);
    \item if $nk$ is odd and $W$ is nondegenerate then $f=\frac{1}{2}a_k(q,t,C,\Or)$;
    \item if $n$ is odd, $k$ is even and $W$ is nondegenerate of type $\varepsilon'$ then $f=\frac{1}{2}a_k(q,t,S,\Or^{\varepsilon'})$.
\end{itemize}
In all cases, $f +o_{q,t}(1)$ is bounded away from $1$ when $q$ and $t$ are fixed and $n$ is large. 
\end{lemma}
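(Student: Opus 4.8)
The plan is to mimic the proof of \Cref{l:expectation_eigenvalue_free}, with the extra bookkeeping required by the orthogonal parabolic and reflection structure. Fix $W\in\Omega$ and let $M$ be its stabilizer in $\Or^\varepsilon_n(q)$. First I would split into the totally singular and the nondegenerate cases. In the totally singular case the stabilizer has the form $M = U\rtimes(\GL_k(q)\times \Or^{\varepsilon'}_{n-2k}(q))$ with $U\le L_n(q)$ and $\varepsilon'$ determined; an element of $A\cap M$ decomposes as $(u,x,y)$ with $u\in U$ arbitrary, $x\in\GL_k(q)$ fixing no space of dimension $\le t$, and $y$ in the appropriate coset $C'$ (a coset of $L_{n-2k}(q)$ in $\Or^{\varepsilon'}_{n-2k}(q)$, or a ``$\{S,O\}$-type'' subset, determined by $u,x$) fixing no space of dimension $\le t$ on the perpendicular complement. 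Using $t\ge 2$ so that elements of $A$ fix at most one $1$-space and at most one $2$-space, the decomposition ``fixes no space of dimension $\le t$ overall iff each piece does'' is clean. Then by \Cref{l:resume_limit_proportion} the proportion of such $y$ is $a_\infty(q,t,\Or^{\varepsilon'}) + o_{q,t}(1)$ independently of the coset (and, when $q$ is odd, independently of which coset of $\Omega^{\varepsilon'}$ in $\SO^{\varepsilon'}$ — this is the point deferred to \Cref{subsec:omega}, but to state the asymptotic we only need \Cref{l:resume_limit_proportion}); since $n/2-k\to\infty$ we may replace $a_{n-2k}$ by $a_n$ up to $o_{q,t}(1)$. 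Counting as in \Cref{l:expectation_eigenvalue_free},
\[
\frac{|A\cap M|}{|A|} = \frac{a_k(q,t,\GL)}{|\Omega|}(1+o_{q,t}(1)),
\]
and summing over the $|\Omega|$ conjugates of $M$ gives $f=a_k(q,t,\GL)$ in the totally singular case.

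For nondegenerate $W$ of type $\varepsilon'$, the stabilizer is $M\cong(\Or^{\varepsilon'}_k(q)\times\Or^{\varepsilon''}_{n-k}(q))\cap \Or^\varepsilon_n(q)$, with $\varepsilon''$ forced (and when $n$ is even one needs $k$ even, $n$ odd forces $k$ or $n-k$ odd — exactly the parity hypotheses in the statement). Here the key subtlety, absent from the linear case, is that an element $g\in A$ need not act ``naively'': by \Cref{notation} an element of $A=A_n(q,t,C,\Or^\varepsilon)$ with $n$ even either lies in $L_n(q)$ (type $S$) or acts as a reflection on some nondegenerate $2$-space and lies in $A_{n-2}(\cdots,S,\cdots)$ on the complement (type $O$); similarly for $n$ odd with a nondegenerate $1$-space. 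So when I intersect $A$ with $M=\Or^{\varepsilon'}_k(q)\times\Or^{\varepsilon''}_{n-k}(q)$ I must track where the exceptional reflection/$(-1)$-eigenspace sits: it can lie inside the $k$-part or inside the $(n-k)$-part. When $C=S$, the element is in $L_n(q)$, hence its restriction to $W$ lies in the $S$-type subset $A_k(q,t,S,\Or^{\varepsilon'})$ while its restriction to $W^\perp$ lies in the $S$-type subset of the larger factor; but one must note that an element of $\SO^\varepsilon_n$ restricting to $\SO^{\varepsilon'}_k\times\SO^{\varepsilon''}_{n-k}$ is split across the two factors — here I would use that, up to $o_{q,t}(1)$, the proportions $a_n(q,t,S,\Or^{\varepsilon'})$ are the same whether one imposes membership in $\Omega$ or merely $\SO$ (again by \Cref{l:resume_limit_proportion} and \Cref{rem:omega}), and that, by \Cref{l:orthogonal_calculation} applied in reverse, the ``$O$''-proportion on the large factor is the average of the two $S$-proportions; since $a_\infty$ is independent of $\pm$ type (the last clause of \Cref{l:resume_limit_proportion}) the large factor contributes $a_\infty(q,t,S,\Or)+o_{q,t}(1)$ regardless, and the $k$-part contributes exactly $a_k(q,t,S,\Or^{\varepsilon'})$. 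When $C=O$ the exceptional reflection can be either on the $k$-side or on the $(n-k)$-side; if on the $k$-side the $k$-restriction contributes $a_k(q,t,O,\Or^{\varepsilon'})$ and the $(n-k)$-restriction is of type $S$ (contributing $a_\infty + o_{q,t}(1)$), if on the $(n-k)$-side the $k$-restriction contributes $a_k(q,t,S,\Or^{\varepsilon'})$ and the large factor contributes the $O$-proportion which is again $a_\infty + o_{q,t}(1)$; since the large-factor proportions agree asymptotically the two possibilities combine to give the stated $f = \tfrac12 a_k(q,t,S,\Or^{\varepsilon'}) + \tfrac12 a_k(q,t,O,\Or^{\varepsilon'})$. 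The cases $nk$ odd and ($n$ odd, $k$ even) are handled the same way, using that for $n$ odd the exceptional $1$-space must be accounted for and a dimension-parity argument pins down on which side it lies.

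In every case, after dividing $|A\cap M|$ by $|A|$ and multiplying by $|\Omega|=|\Or^\varepsilon_n(q):M|$, the orders of the ambient and factor groups telescope exactly as in \Cref{l:symmetric_groups,l:expectation_eigenvalue_free}, leaving $f+o_{q,t}(1)$ with $f$ as listed. Finally, the ``bounded away from $1$'' assertion follows as before: each quantity $a_k(q,t,-,X)$ is, for $k$ bounded, a fixed number in $(0,1)$, and for $k\to\infty$ it tends to $a_\infty(q,t,-,X)<1$ by \Cref{l:resume_limit_proportion}; a convex combination with coefficients $\tfrac12,\tfrac12$ of two such quantities is likewise bounded away from $1$.

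The main obstacle I expect is the nondegenerate $C=O$ bookkeeping: correctly arguing that the ``exceptional'' reflection (or $(-1)$-eigenspace in the odd case) distributes between the two orthogonal factors in a way that produces the clean $\tfrac12$–$\tfrac12$ split, and that the resulting coset/type constraints on each factor match the subsets $A_k(q,t,S,\Or^{\varepsilon'})$, $A_k(q,t,O,\Or^{\varepsilon'})$ of \Cref{notation} up to $o_{q,t}(1)$ — this is where the type-independence of $a_\infty$ and \Cref{l:orthogonal_calculation} do the real work, and where one must be careful that the error terms remain $o_{q,t}(1)$ uniformly. Everything else is the same order-counting as in \Cref{l:expectation_eigenvalue_free}.
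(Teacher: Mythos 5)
Your proposal follows essentially the same route as the paper's proof: decompose $A\cap M$ via the stabilizer of $W$, track whether the exceptional reflection/$(-1)$-eigenspace lies in $W$ or in $W^{\perp}$ (giving the two summands in the nondegenerate $C=O$ case), and invoke \Cref{l:resume_limit_proportion} (independence of the limit from the type $\varepsilon$ and from $C$) so that the large factor contributes $1+o_{q,t}(1)$, with the factor $\tfrac12$ coming out of the index-$2$ order bookkeeping when summing over conjugates of $M$. Apart from minor cosmetic detours (your appeal to \Cref{l:orthogonal_calculation} ``in reverse'' and the worry about $\Omega$-versus-$\SO$ cosets, which is moot here since $A$ carries no such coset constraint in \Cref{notation}), this is the paper's argument.
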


\begin{proof}
 Set $L^\delta_n(q) = \SO^\delta_n(q)$ if $q$ is odd and $L^\delta_n(q) = \Omega^\delta_n(q)$ if $q$ is even. The proof of the lemma is similar to the one of \Cref{l:expectation_eigenvalue_free}. The case where $n$ is even and $C=S$ is identical; we just note that we use that the limiting proportion is independent of the sign $\varepsilon$ and of $C$ (see the last part of \Cref{l:resume_limit_proportion}). 

 Let us move to the case where $n$ is even, $C=O$ and $W$ is totally singular. Let $M$ be the stabilizer in $\Or^\varepsilon_n(q)$ of $W$, so $M=U\rtimes (\GL_k(q)\times \Or^{\varepsilon}_{n-2k}(q))$, where $U\le \Omega^{\varepsilon}_n(q)$. Then $A\cap M$ consists of the elements  $(u,x,y)$ where $u\in U$, $x\in \GL_k(q)$ fixes no space of dimension at most $t$, and $y\in A_{n-2k}(q,t,O,\Or^{\varepsilon})$. (Observe that $x$ cannot fix any $1$-space or $2$-space on $W\oplus W^*$, since the elements of $B$ fix only one $1$-space and at most one $2$-space.) Using that $n-2k\to \infty$ and \Cref{l:resume_limit_proportion} we get
    \begin{align*}
    \frac{|A\cap M|}{|A|} &= \frac{1}{|A|}a_k(q,t,\GL)|\GL_k(q)|\cdot a_{n-2k}(q,t,O,\Or^{\varepsilon})|L^\varepsilon_{n-2k}(q)|\cdot |U| \\
    &= \frac{a_k(q,t, \GL)}{|\Omega|}(1+o_{q,t}(1)).
    \end{align*}
Summing over all conjugates of $M$, we get the result.

Assume now $n$ is even, $C=O$ and $W$ is nondegenerate, so $k$ is even and $M=\Or^{\varepsilon'}_k(q)\times \Or^{\delta}_{n-k}(q)$, where $\varepsilon'\delta= \varepsilon$. Then $A\cap M$ consists of the elements $(x,y)$, where $x\in A_k(q,t,X,\Or^{\varepsilon'})$ and  $y\in A_{n-k}(q,t,Y,\Or^{\delta})$ for distinct $X,Y\in \{S,O\}$. Therefore 
   \begin{align*}
    \frac{|A\cap M|}{|A|} &= \frac{1}{|A|}a_k(q,t,S,\Or^{\varepsilon'})|L^{\varepsilon'}_k(q)|\cdot a_{n-k}(q,t,O,\Or^{\delta})|L^{\delta}_{n-k}(q)| \\
    &+ \frac{1}{|A|}a_k(q,t,O,\Or^{\varepsilon'})|L^{\varepsilon'}_k(q)|\cdot a_{n-k}(q,t,S,\Or^{\delta})|L^{\delta}_{n-k}(q)| \\
    &= \frac{a_k(q,t,S, \Or^{\varepsilon'})+a_k(q,t,O,\Or^{\varepsilon'})}{2|\Omega|}(1+o_{q,t}(1)),
    \end{align*}
where we used \Cref{l:resume_limit_proportion} (and, as already remarked in the first paragraph, also the last part of the statement). Summing over all conjugates on $M$ we get the desired estimate.

In case $n$ odd, an analogous argument gives the desired estimate (in this case, we must choose an element with eigenvalue $1$ on the subspace of odd dimension).
\end{proof}

We now prove an analogue for the coset $C=\GL_n(q)\tau$, where $\tau$ denotes the inverse-transpose automorphism.

Recall that a \textit{flag} (resp. \textit{antiflag}) is a pair $\{U,W\}$ of nonzero subspaces of $\F_q^n$ with $\dim(U)\le \dim(W)$, $\dim(U)+\dim(W)=n$, and $U\le W$ (resp. $U\cap W=0$). We say that a flag or antiflag is a \textit{$k$-flag} or \textit{$k$-antiflag} if $\dim(U)=k$.

We observe that when $n$ is odd then every element of $A_n(q,t,C,\GL)$ fixes a $1$-antiflag. Moreover, when $n$ is even and $k$ is odd, then no element of $A_n(q,t,C,\GL)$ fixes a $k$-antiflag. We exclude these cases from the following lemma. 

\begin{lemma}
	\label{l:expectation_graph}
	Let $C=\GL_n(q)\tau$, where $\tau$ is the inverse-transpose automorphism. Let $1\le t< k < n/2$, let $\Omega$ be the set of $k$-flags or $k$-antiflags, and let $W\in \Omega$. If $W$ is an antiflag assume $k\ge 2$ if $n$ is odd, and $k$ even if $n$ is even; and if $W$ is a flag assume $n/2-k\to \infty$ as $n\to \infty$. Then, setting $A:=A_n(q,t,C,\GL)$, we have
	\[
	\frac{1}{|A|} \sum_{g\in A} \fp(g,\Omega) = a_{k-\delta}(q,t,I) + o_{q,t}(1)
	\]
	where
	\begin{itemize}[$\diamond$]
		\item if  $W$ is a flag then $I=\GL$ and $\delta=0$;
		\item if $W$ is an antiflag then $I=\Sp$, and moreover $\delta=0$ if $k$ is even and $\delta=1$ if $k$ is odd.
	\end{itemize}
\end{lemma}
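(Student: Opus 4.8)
The plan is to follow the pattern of the proofs of \Cref{l:expectation_eigenvalue_free} and \Cref{l:expectation_orthogonal_groups}: fix $W\in\Omega$, let $M$ be the stabiliser of $W$ in $\gen{\GL_n(q),\tau}$, describe $A\cap M\subseteq\GL_n(q)\tau$, evaluate $|A\cap M|/|A|$, and then sum over the $\gen{\GL_n(q),\tau}$-conjugates of $M$, which form a single orbit because $\GL_n(q)$ is already transitive on $k$-flags and on $k$-antiflags, and $A$ is invariant under this conjugation (it is cut out by conjugation-invariant conditions on $(g\tau)^2$). One realises $\tau$ on the subspace lattice of $\F_q^n$ via a fixed nondegenerate bilinear form, so that $g\tau$ sends $U$ to $g(U^{\perp})$; then $g\tau$ stabilises $\{U,W_0\}$, with $\dim U=k$ and $\dim W_0=n-k$, exactly when $g(W_0^{\perp})=U$ and $g(U^{\perp})=W_0$, and then $(g\tau)^2=gg^{\tau}$ preserves both $U$ and $W_0$. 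This is the point at which the Fulman--Guralnick dictionary underlying \Cref{t:inverse_transpose_fg} comes in: it identifies $\GL_n(q)\tau$ with $\Sp_{n-\delta}(q)$ ($\delta=0$ if $n$ even, $\delta=1$ if $n$ odd) so that $k$-flags match totally singular $k$-spaces and $k$-antiflags match nondegenerate subspaces of the symplectic space, and the averages could in fact be read off from \Cref{l:expectation_eigenvalue_free} for $\Sp_{n-\delta}(q)$; but we organise the computation directly.

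In the antiflag case, choosing the form so that $\F_q^n=U\perp W_0$ gives $M=(\GL_k(q)\times\GL_{n-k}(q))\gen{\sigma}$ with $\sigma\in\GL_n(q)\tau$ inducing the graph automorphism on each factor. An element of $A\cap M$ is then a pair $(h_1,h_2)$ with $h_1$ in the graph coset $\GL_k(q)\tau_1$ acting on $U$ and $h_2$ in the graph coset $\GL_{n-k}(q)\tau_2$ acting on $W_0$, and $(g\tau)^2=(h_1^{2},h_2^{2})$; since a block-diagonal map fixes a nonzero subspace of dimension at most $t$ if and only if one of its blocks does, $g\tau\in A$ amounts to $h_1\in A_k(q,t,C_1,\GL)$ and $h_2\in A_{n-k}(q,t,C_2,\GL)$ for the corresponding graph cosets. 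By \Cref{t:inverse_transpose_fg} the proportions are $a_{k-\delta}(q,t,\Sp)$ and $a_{n-k-\delta'}(q,t,\Sp)$ for suitable $\delta,\delta'\in\{0,1\}$, and since $k<n/2$ forces $n-k\to\infty$, \Cref{l:resume_limit_proportion} shows the latter is $a_\infty(q,t,\Sp)+o_{q,t}(1)$. Comparing orders of groups as in \Cref{l:expectation_eigenvalue_free} gives $|A\cap M|/|A|=(a_{k-\delta}(q,t,\Sp)+o_{q,t}(1))/|\Omega|$, and summing over conjugates of $M$ yields $f=a_{k-\delta}(q,t,\Sp)$. The parity conditions, the value of $\delta$, and the excluded cases all reflect the correspondence: a $k$-antiflag matches a nondegenerate subspace of a symplectic space of dimension $n$ or $n-1$, which must be even-dimensional, so $k$ is even when $n$ is even; and when $n$ is odd the relevant component of $(g\tau)^2$ fixes one vector, forcing the shift $\delta=1$ for odd $k$ and explaining why every element of $A$ fixes a $1$-antiflag.

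In the flag case, $M\cap\GL_n(q)$ is the parabolic $P$ stabilising the chain $U\le W_0$, with Levi factor $\GL_k(q)\times\GL_{n-2k}(q)\times\GL_k(q)$, and $M=P\gen{\sigma}$ where $\sigma\in\GL_n(q)\tau$ interchanges the two $\GL_k(q)$ factors (identifying $\F_q^n/W_0$ with the dual of $U$) and induces the graph automorphism on the middle $\GL_{n-2k}(q)$, which carries an induced nondegenerate form. An element of $A\cap M$ is given by a free unipotent-radical part, a component $h$ in a graph coset $\GL_{n-2k}(q)\tau_1$ on $W_0/U$, and a component $x\in\GL_k(q)$ on $U$; the identification $\F_q^n/W_0\cong U^{*}$ forces the action on $\F_q^n/W_0$ to be the contragredient of that on $U$, and the induced action of $(g\tau)^2$ on $U$ is an arbitrary element of $\GL_k(q)$. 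Since a matrix and its contragredient have invariant subspaces of complementary dimensions (divisors of the characteristic polynomial come in complementary pairs), $g\tau\in A$ is equivalent to ``the $\GL_k(q)$-component on $U$ fixes no nonzero subspace of dimension at most $t$'' (proportion $a_k(q,t,\GL)$, that component being equidistributed on $\GL_k(q)$) together with ``$h\in A_{n-2k}(q,t,C',\GL)$''; here the hypothesis $n/2-k\to\infty$ gives $n-2k\to\infty$, so by \Cref{t:inverse_transpose_fg} and \Cref{l:resume_limit_proportion} the latter proportion is $a_\infty(q,t,\Sp)+o_{q,t}(1)$. Comparing group orders and summing over conjugates of $M$ yields $f=a_k(q,t,\GL)$. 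Finally, in every case $f+o_{q,t}(1)$ is bounded away from $1$ for large $n$: by \Cref{boun} (resp.\ \Cref{sympodd,sympeven}) the quantities $a_k(q,t,\GL)$ and $a_{k-\delta}(q,t,\Sp)$ are bounded away from $1$ for $k>t$, splitting into ``$k$ bounded'' and ``$k\to\infty$'' exactly as in \Cref{l:symmetric_groups} and \Cref{l:expectation_eigenvalue_free}.

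The main difficulty is not the counting, which mirrors the earlier lemmas once everything is set up, but pinning down the structure of $M$ and of the coset $M\cap(\GL_n(q)\tau)$: choosing the bilinear form compatibly with the given flag or antiflag, checking that the decomposition into components with the correct $\sigma$-twisting is exact and that $(g\tau)^2$ fixes no small subspace of $\F_q^n$ precisely when each component does, and, when $n$ is odd, tracking the vector fixed by $(g\tau)^2$ --- equivalently the one-dimensional radical of the bilinear form associated to $g\tau$ --- through the passage to $\F_q^n$ modulo that line, which produces the shift between $a_k$ and $a_{k-1}$ for odd-dimensional antiflag components and the exclusions in the statement.
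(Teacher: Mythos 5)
Your proposal is correct and follows essentially the same route as the paper: describe the stabiliser $M$ of the flag or antiflag in $\gen{\GL_n(q),\tau}$, decompose the coset elements of $A\cap M$ into components, translate the graph-coset components via \Cref{t:inverse_transpose_fg} into symplectic proportions, use \Cref{l:resume_limit_proportion} to replace the proportions on the large component and on $A$ itself by the common limit, and sum over conjugates of $M$ (in particular, in the flag case your observation that the $U$-component of $(g\tau)^2$ is an equidistributed element of $\GL_k(q)$ whose condition already controls the $\F_q^n/W_0$-block is exactly the paper's computation with diagonal blocks $(g_1g_3^\tau,\,g_2g_2^\tau,\,g_3g_1^\tau)$). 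The only quibble is the parenthetical ``complementary dimensions'' justification, where the cleaner statement is that the contragredient has characteristic-polynomial factors of the same degrees, but the conclusion you draw from it is the right one.
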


\begin{proof}
	Let us begin with the case where $W$ is an antiflag. Let $M$ be the stabilizer of $W$ in $\gen{\GL_n(q),\tau}$; we have $M=\gen{H,\tau}$ where $H\cong \GL_k(q)\times \GL_{n-k}(q)$ and $\tau$ acts as the inverse-transpose automorphism on each factor. Note that $A\cap M$ consists of the elements $g\tau$, with $g=(g_1,g_2)\in H$ and $g_i$ is such that $g_ig_i^\tau=(g_i\tau)^2$ fixes no space of dimension at most $t$, or fixes a vector and no other space of dimension at most $t$ depending on the parity of $k$ and $n-k$.

	By \Cref{t:inverse_transpose_fg}, the number of choices for $g_1$ is $a_{k-\delta}(q,t,\Sp)|\GL_k(q)|$, and the number of choices for $g_2$ is $a_{n-k-\delta'}(q,t,\Sp)|\GL_{n-k}(q)|$, where $\delta'\in \{0,1\}$ is such that $n-k-\delta'$ is even. For the same reason $|A|=|a_{n-\delta''}(q,t,\Sp)|\GL_n(q)|$, where $\delta''\in \{0,1\}$ is such that $n-\delta''$ is even, and so by \Cref{l:resume_limit_proportion} we get
	\begin{align*}
		\frac{|A\cap M|}{|A|} &= \frac{1}{|A|} a_{k-\delta}(q,t,\Sp)|\GL_k(q)| \cdot a_{n-k-\delta'}(q,t,\Sp)|\GL_{n-k}(q)| \\
		&= \frac{a_{k-\delta}(q,t,\Sp)}{|\Omega|}(1+o_{q,t}(1)).
	\end{align*}
	Summing over all conjugates of $H$ gives the result.
	
	The case where $W$ is a flag is analogous; here a point stabilizer $M$ has the form $\gen{H,\tau}$ where $H$ is a parabolic of type $(k,n-2k,k)$ and $\tau$ acts on the diagonal blocks by $(g_1,g_2,g_3)^\tau = (g_3^\tau, g_2^\tau, g_1^\tau)$, so the diagonal blocks of $gg^\tau$ are $(g_1g_3^\tau, g_2g_2^\tau,g_3g_1^\tau)$. In particular if $g\tau\in A \cap M$ then $g_1$ is arbitrary, $(g_2\tau)^2$ fixes no space of dimension at most $t$, or a vector and no other space of dimension at most $t$ depending on the parity of $n$, and $g_3$ is such that $g_1g_3^\tau$ fixes no space of dimension at most $t$. We also use that $n-2k\to \infty$ as in the proof of \Cref{l:expectation_eigenvalue_free,l:expectation_orthogonal_groups}.
\end{proof}

\subsection{$\Omega_n(q)$ with $q$ odd}
\label{subsec:omega}
As already mentioned in \Cref{rem:omega}, in orthogonal groups in odd characteristic we have to distinguish between the cosets of $\Omega^\varepsilon_n(q)$ in $\Or^\varepsilon_n(q)$.

The key ingredient to handle this can be found in \cite{fulman2017guralnick_subspace}. An analogue, for random polynomials  rather than random elements, was proved in \cite[Proposition 2.8 and Lemma 3.11]{eberhard2023garzoni_boston_shalev} with other methods.

\begin{lemma}
	\label{l:weyl_omega_two_cosets}
	Let $q$ be odd. The proportion of elements of $\SO^\varepsilon_n(q)$ that are semisimple and whose characteristic polynomial has an even number of irreducible factors of degree $k$ for every $k\in 4\Zint$ tends to $0$ as $n\to \infty$.
\end{lemma}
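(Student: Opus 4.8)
The plan is to prove this by the generating-function (cycle-index) methods of \Cref{sec:generating_functions}, using the observation that the stated condition is a separate constraint for each $k\in 4\Zint$ and that these constraints together have vanishing probability because of a harmonic-series divergence. Throughout, ``degree-$k$ irreducible factor'' is counted with multiplicity. First I would reduce to self-reciprocal factors: since $\chi_g$ is self-reciprocal for every $g\in\Or^\varepsilon_n(q)$, the non-self-reciprocal irreducible factors of $\chi_g$ occur in conjugate pairs $\{\phi,\phi^*\}$ with equal multiplicities, so for each $k$ the number of degree-$k$ irreducible factors of $\chi_g$ has the same parity as the number of degree-$k$ \emph{self-reciprocal} irreducible factors. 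Thus the event in the statement says that $g$ is semisimple and, for every $k\in 4\Zint$, $\chi_g$ has an even number of self-reciprocal irreducible factors of degree $k$; in particular, dropping the semisimplicity condition only makes the set larger.

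Next, fix a cutoff $K$ and let $Q_{n,K}$ be the proportion of $g\in\SO^\varepsilon_n(q)$ such that $\chi_g$ has an even number of self-reciprocal irreducible factors of degree $k$ for every $k\in 4\Zint$ with $k\le K$; by the previous paragraph, the proportion $P_n$ in the statement satisfies $P_n\le Q_{n,K}$. To compute $\lim_{n\to\infty}Q_{n,K}$ I would run the cycle-index argument of \Cref{sec:generating_functions} with a sign variable for each $k\in 4\Zint$ with $k\le K$, tracking the parity of the number $\sum_{\deg\phi=k}|\lambda_g(\phi)|$ of degree-$k$ irreducible factors of $\chi_g$, and then extract the even parts by averaging over signs. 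As in \Cref{slcoset,sympodd} and \cite{fulman2017guralnick_subspace}, the resulting ``signed'' generating functions factor over irreducible polynomials, \Cref{fir} applies, and one obtains
\[
\lim_{n\to\infty}Q_{n,K}\;=\;\prod_{\substack{k\in 4\Zint\\ k\le K}} c_k ,
\]
where $c_k\in(0,1)$ is the limiting probability that $\chi_g$ has an even number of self-reciprocal irreducible factors of degree $k$ (so $c_k=(1+v_k)/2$ for the relevant value $v_k\in(-1,1)$ coming from the sign generating function at $u=1$).

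It then remains to bound $1-c_k$ from below. The number of monic self-reciprocal irreducible polynomials of degree $k\in 4\Zint$ over $\F_q$ is of order $q^{k/2}/k$; the centralizer of a single companion block of such a polynomial has order of order $q^{k/2}$, so the limiting probability that any fixed such polynomial divides $\chi_g$ is of order $q^{-k/2}$, while the limiting probability that $\chi_g$ has at least two self-reciprocal degree-$k$ irreducible factors is $O(q^{-k/2}/k)=o(1/k)$. Hence, in the limit, $\chi_g$ has exactly one self-reciprocal degree-$k$ irreducible factor with probability at least $c/k$ for an absolute constant $c>0$, so $1-c_k\ge c/k$ and $\sum_{k\in 4\Zint}(1-c_k)=\infty$. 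Therefore $\prod_{k\in 4\Zint}c_k=0$, and letting first $n\to\infty$ and then $K\to\infty$ in $P_n\le Q_{n,K}$ gives $P_n\to 0$.

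The step I expect to be the main obstacle is the factorization of the signed generating functions over degrees and the resulting clean product $\prod_k c_k$ in the limit: one needs the low-degree factorization statistics of a random element of $\SO^\varepsilon_n(q)$ to decouple across degrees up to $o(1)$ errors, with the per-degree limiting laws as described (and with the $\SO$ statistics agreeing with the $\Or$ statistics of \Cref{sec:generating_functions} to leading order, the index-$2$ conditioning affecting only the degree-$1$ factor $x+1$). Working with a fixed cutoff $K$ first, so that only finitely many constraints are in play, is what makes the interchange of limits legitimate; the harmonic divergence $\sum 1/k=\infty$ then closes the argument. (The same computation, keeping the semisimplicity restriction, only replaces the $c_k$ by slightly different constants still lying in $(0,1)$, so it is harmless to drop it above.)
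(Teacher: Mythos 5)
Your argument is correct in outline, but it takes a genuinely different route from the paper. The paper's proof is a short reduction: it quotes \cite[Lemma 7.22]{fulman2017guralnick_subspace} -- the proportion of elements of the Weyl group $W(C_{\lfloor n/2\rfloor})$ having an even number of negative $k$-cycles for every even $k$ tends to $0$ -- extends this to the relevant cosets of $W(D_{\lfloor n/2\rfloor})$, and then transfers the statement to semisimple elements of $\SO^\varepsilon_n(q)$ via the standard correspondence between maximal tori and Weyl group classes (a self-conjugate irreducible factor of degree $k$ corresponds to a negative $(k/2)$-cycle, so $k\in 4\Zint$ corresponds to $k/2$ even). You instead work directly with characteristic-polynomial statistics in the classical group: drop semisimplicity by monotonicity (so you in fact prove a slightly stronger statement), truncate to degrees $k\le K$ so that only finitely many parity constraints are in play, obtain asymptotic independence across degrees by marking the finitely many self-conjugate polynomials of those degrees with signs in the cycle index and applying \Cref{fir}, and close with the harmonic divergence $\sum_{k\in 4\Zint}(1-c_k)\gg \sum_k 1/k=\infty$ -- which is exactly the mechanism underlying the cited Weyl-group lemma, carried out at the level of the group rather than of $W$. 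What the paper's route buys is brevity (all analytic work is outsourced to \cite{fulman2017guralnick_subspace}, and the semisimplicity restriction needed later in \Cref{l:omega_two_cosets} comes along for free); what yours buys is self-containedness relative to \Cref{sec:generating_functions}, at the cost of actually carrying out the signed generating-function computation you only sketch (parity extraction by averaging over sign vectors, analyticity of the modified factors past $|u|=1$, and the estimate that a fixed self-conjugate $\phi$ of even degree $k$ divides the characteristic polynomial with limiting probability $\asymp q^{-k/2}$, giving odd-parity probability $\asymp 1/k$). Two small repairs: the passage from $\Or^\varepsilon_n(q)$ to $\SO^\varepsilon_n(q)$ needs no conditioning claim about the factor $z+1$, since $|\Or^\varepsilon_n(q)|=2\,|\SO^\varepsilon_n(q)|$ gives trivially that the proportion in $\SO^\varepsilon_n(q)$ is at most twice that in $\Or^\varepsilon_n(q)$; and your bound for the event of at least two self-reciprocal degree-$k$ factors should be $O(1/k^2)$ (union bound over pairs of polynomials), while $O(q^{-k/2}/k)$ is the probability of a repeated factor -- both are $o(1/k)$, so the conclusion stands.
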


\begin{proof}
	Let $W=W(C_{\floor{n/2}})$ be the Weyl group of type $C_{\floor{n/2}}$. It can be found in \cite[Lemma 7.22]{fulman2017guralnick_subspace} that the proportion of elements of $W$ having an even number of negative $k$-cycles for every even $k$ tends to zero as $n\to \infty$. Then the same is true for elements in the cosets $W(D_{\floor{n/2}})$ and $W\sm W(C_{\floor{n/2}})$. The statement, then, follows from known correspondences between maximal tori and Weyl group (see \cite[Section 5]{fulman2017guralnick_subspace} or \cite[Section 3.2]{garzoni2023mckemmie} for the details). 
\end{proof}

\begin{lemma}
	\label{l:omega_two_cosets}
	Let $q$ be odd. There exist subsets $X_\Omega \subseteq \Omega^\varepsilon_n(q)$ and $X_S \subseteq (\SO^\varepsilon_n(q) \sm \Omega^\varepsilon_n(q))$ and a bijection $\phi\colon X_\Omega \to X_S$ such that
	\begin{itemize}
		\item[(i)] $|X_\Omega|/|\Omega^\varepsilon_n(q)|\to 1$ as $n\to \infty$; and
		\item[(ii)] for each subspace $W$ of $\F_q^n$ and $g\in X_\Omega$, $W^g=W$ if and only if $W^{\phi(g)}=W$. 
	\end{itemize}
\end{lemma}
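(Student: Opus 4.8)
The plan is to pair almost every $g\in\Omega^{\varepsilon}_n(q)$ with an explicit element $\phi(g)=z(g)\,g$ of the non-trivial coset, where $z(g)$ lies in the commutative algebra $\F_q[g]$ and acts as $-1$ on a $g$-invariant nondegenerate subspace $U=U(g)$ and as $+1$ on $U^{\perp}$, with $U$ a union of primary components of $g$ closed under $\psi\mapsto\psi^{-}$ (here $\psi^{-}(z):=(-1)^{\deg\psi}\psi(-z)$). Since $z(g)$ is a polynomial in $g$ (using $q$ odd to form the CRT idempotent onto $U$), every $g$-invariant subspace is $z(g)$-invariant, hence $\phi(g)$-invariant, and the closure condition will give the converse. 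Such a $z(g)$ automatically lies in $\Or^{\varepsilon}_n(q)$, with $\det z(g)=(-1)^{\dim U}$ and spinor norm the square class of $\prod_iQ(e_i)$ over an orthogonal basis $(e_i)$ of $U$ — that is, for $\dim U$ even, the discriminant class of $U$. So the construction comes down to choosing a $\psi\mapsto\psi^{-}$-closed union of primary components of even dimension and non-square discriminant.

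The arithmetic input is the discriminant of a self-conjugate block. If $\phi\neq z\pm1$ is a self-conjugate monic irreducible of (necessarily even) degree $k$, then on the basic $k$-dimensional $\phi$-block the $g$-invariant bilinear form is $(x,y)\mapsto\opr{Tr}_{\F_{q^k}/\F_q}(\lambda\,xy^{q^{k/2}})$ for some $\lambda\in\F_{q^{k/2}}^{\times}$, so its discriminant class is $\opr{disc}(\opr{Tr}_{\F_{q^k}/\F_q})\cdot N_{\F_{q^k}/\F_q}(\lambda)\cdot\det(x\mapsto x^{q^{k/2}})$, the last factor being the determinant of that $\F_q$-linear map on $\F_{q^k}$. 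Now $N_{\F_{q^k}/\F_q}(\lambda)$ is a square (as $\lambda\in\F_{q^{k/2}}$), the trace-form discriminant of $\F_{q^m}/\F_q$ is a square iff $m$ is odd (so a non-square here), and $\det(x\mapsto x^{q^{k/2}})$ has sign $(-1)^{k/2}$; hence the basic block has non-square discriminant exactly when $4\mid k$, and crucially this is independent of $q$ — which is precisely why the hypothesis $k\in 4\Zint$ appears. Since $\opr{disc}(\ker\phi(g)^{\infty})$ is the $m_\phi$-th power of the basic-block discriminant, it is a non-square iff the multiplicity $m_\phi$ is odd. Therefore, if for some $k\in 4\Zint$ the number (with multiplicity) of degree-$k$ irreducible factors of the characteristic polynomial of $g$ is odd — equivalently, since a conjugate pair $\{p,p^{*}\}$ contributes evenly, if $\sum_{\phi\ \mathrm{self\text{-}conj},\ \deg\phi=k}m_\phi$ is odd — we take $k_0$ minimal with this property, set $U(g)=\bigoplus_{\phi\ \mathrm{self\text{-}conj},\ \deg\phi=k_0}\ker\phi(g)^{\infty}$, and obtain $\dim U(g)\equiv 0\pmod 4$ with $\opr{disc}(U(g))$ a non-square, so $z(g)\in\SO^{\varepsilon}_n(q)\sm\Omega^{\varepsilon}_n(q)$.

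Let $X_{\Omega}$ be the set of $g\in\Omega^{\varepsilon}_n(q)$ admitting such a $k_0(g)$, put $\phi(g)=z(g)g$, and $X_S=\phi(X_{\Omega})$. Property (ii) holds because $U(g)$ is $\psi\mapsto\psi^{-}$-closed (negation permutes the self-conjugate polynomials of degree $k_0$), so $z(g)g$ has the same primary decomposition as $g$ and equals $\pm g$ on each component; hence $g\in\F_q[z(g)g]$ and the two elements have the same lattice of invariant subspaces. For the bijection one checks $\phi\circ\phi=\mathrm{id}$: the element $\phi(g)$ has exactly the same primary components as $g$ as subspaces — only the labels $\phi$ on $U(g)$ are replaced by the self-conjugate degree-$k_0$ polynomials $\phi^{-}$ — so $k_0$, $U(g)$, and hence $z(g)$, are recovered from $\phi(g)$; thus $\phi(g)\in X_S\subseteq\SO^{\varepsilon}_n(q)\sm\Omega^{\varepsilon}_n(q)$ and $\phi(\phi(g))=z(g)^{2}g=g$, so $\phi\colon X_{\Omega}\to X_S$ is a bijection satisfying (ii).

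Finally, for (i) note that $\Omega^{\varepsilon}_n(q)\sm X_{\Omega}$ is contained in the set of $g$ whose characteristic polynomial has an even number of irreducible factors of every degree $k\in 4\Zint$, so it suffices to show this set has vanishing proportion in $\SO^{\varepsilon}_n(q)$. Its semisimple part is $o(|\SO^{\varepsilon}_n(q)|)$ by \Cref{l:weyl_omega_two_cosets}. The non-semisimple part is the main obstacle, since \Cref{l:weyl_omega_two_cosets} does not control it directly: one must either enlarge $X_{\Omega}$ by allowing flips on further $\psi\mapsto\psi^{-}$-closed unions — most usefully $\ker(g-1)^{\infty}\oplus\ker(g+1)^{\infty}$, which for $n$ even has determinant $1$, or a suitable nondegenerate invariant subspace inside an even-multiplicity self-conjugate block of degree divisible by $4$ when the dual $\psi^{-}$ is not itself a factor — so as to reduce the exceptional set to one still governed by \Cref{l:weyl_omega_two_cosets}; or supplement \Cref{l:weyl_omega_two_cosets} with the analogous, same-flavour generating-function/Weyl-group estimate for non-semisimple elements. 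In either case the real work is the bookkeeping of which primary components can be sign-flipped without disturbing the invariant-subspace lattice.
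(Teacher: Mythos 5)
Your pairing map and your verification of (ii) are essentially the paper's construction: flip the sign on a canonically chosen, duality-closed union of primary components attached to self-conjugate factors of minimal degree $k_0\in 4\Zint$ occurring with odd total multiplicity, and observe that the flip is central on each primary component, so the invariant-subspace lattice is unchanged and the map is an involution swapping the two cosets. Your spinor-norm bookkeeping (basic block of a self-conjugate factor of degree $k$ has non-square discriminant when $4\mid k$, uniformly in odd $q$, and the discriminant of a full primary component is the $m_\phi$-th power of the basic-block discriminant) is correct and is in fact more explicit than the paper, which simply cites \cite[Proposition 2.5.13]{kleidman1990liebeck}. (Small slip: for $k\equiv 2\pmod 4$ and $q\equiv 1\pmod 4$ the basic block also has non-square discriminant, so ``exactly when $4\mid k$'' is not accurate; but you only use the implication $4\mid k\Rightarrow$ non-square, which is what matters.)

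The genuine gap is part (i). With your definition of $X_\Omega$ (parity condition on the full characteristic polynomial), the exceptional set consists of elements, not necessarily semisimple, having an even number of degree-$k$ factors for every $k\in4\Zint$, and \Cref{l:weyl_omega_two_cosets} controls only the semisimple ones. You flag this honestly, but neither of your two proposed remedies (extra flips on $\ker(g\mp1)^\infty$ or inside even-multiplicity blocks; or a new estimate for non-semisimple elements) is carried out, so (i) is not proved. The paper's fix is different from both and simpler: let $V_g$ be the sum of the primary blocks on which $g$ is semisimple, impose the parity condition on $g|_{V_g}$ only, and flip inside $V_g$. By \cite[Theorem 8.1]{fulman2017guralnick_subspace} one has $\dim(V_g)\ge n-\log(n)$ for a proportion of elements tending to $1$, and then \Cref{l:weyl_omega_two_cosets} (via the torus--Weyl group correspondence) shows the parity condition on $g|_{V_g}$ holds asymptotically almost surely in $\SO^\varepsilon_n(q)$; since the resulting map is an involution exchanging the cosets, each coset gets half of this set, giving (i). Since $g|_{V_g}$ is semisimple, only your basic-block discriminant computation is needed there, and the rest of your argument for (ii) and for the bijection goes through verbatim with this modified $X_\Omega$.
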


\begin{proof}
	For an element $g\in \SO^\varepsilon_n(q)$, let $V_g$ be the sum of all primary blocks on which $g$ is semisimple; this is a well-defined nondegenerate subspace of $\F_q^n$. By \cite[Theorem 8.1]{fulman2017guralnick_subspace}, with high probability as $n\to \infty$ we have $\dim(V_g) \ge n-\log(n)$ (say). We now let $X$ be the set of elements $g$ of $\SO^\varepsilon_n(q)$ such that $\dim(V_g)\ge n-\log(n)$ and such that the characteristic polynomial of $g|_{V_g}$ has an odd number of irreducible factors for some $k\in 4\Zint$. By \Cref{l:weyl_omega_two_cosets} we have $|X|/|\SO^\varepsilon_n(q)|\to 1$ as $n\to \infty$.

	We now set $X_\Omega=X\cap \Omega^\varepsilon_n(q)$ and $X_S = X \cap (\SO^\varepsilon_n(q) \sm \Omega^\varepsilon_n(q))$. The bijection $\phi$ is constructed as follows: For $g\in X_\Omega$, let $k\in 4\Zint$ be minimal such the characteristic polynomial of $g|_{V_g}$ has an odd number of irreducible factors. Let $W$ be the sum of all subspaces corresponding to such factors, and define $\phi(g)$ as the element obtained by multiplying $g$ by $-1$ on $W$. We have indeed $\phi(g)\in X_S$ (see \cite[Proposition 2.5.13]{kleidman1990liebeck}). Clearly $\phi$ is a bijection; (ii) is clear, and (i) follows since $|X_\Omega|/|\Omega^\varepsilon_n(q)| = |X|/|\SO^\varepsilon_n(q)|$.
\end{proof}

\subsection{A fixed point ratio estimate}

We need another result, which is a refinement of a fixed point ratio estimate from \cite{guralnick2000kantor}.

For later reference, let us begin by recording some of these estimates. If $G$ is an almost simple classical group with socle $S$ and natural module $V$ of dimension $n$, a subgroup $M$ of $G$ is called a \textit{subspace subgroup} if it is the stabilizer of a nonzero proper subspace of $V$ (of any relevant type), or $S=\PSL_n(q)$, $G\not\leq \PGamL_n(q)$ and $M$ is the stabilizer of a flag or antiflag, or $S=\Sp_n(q)$, $q$ even and $M=\Or^\pm_n(q)$.

In the following statement, for $1\le k \le n/2$ we say that a subspace subgroup is a \textit{$k$-subspace subgroup} if it is the stabilizer of a $k$-space or the stabilizer of a $k$-flag or $k$-antiflag. It is convenient to include the subgroups $\Or^\pm_n(q) < \Sp_n(q)$ among the $1$-subspace subgroups. We also set $u=2$ if $G$ is unitary and $u=1$ otherwise.

\begin{theorem}\cite{guralnick2000kantor}
	\label{t:guralnick_kantor}
	Let $G$ be an almost simple classical group with socle $S$ and with natural module $\F^n_{q^u}$ ($u\in \{1,2\}$), let $1\le k \le n/2$ and let $M$ be a $k$-subspace subgroup of $G$. Then, for every $1\neq g\in G$, the following hold:
	\begin{itemize}
		\item[(i)] if $S=\PSL_n(q)$ then $\fpr(g,G/M)< 2/q^k$, and if moreover $k=1$ then $\fpr(g,G/M)< 1/q+1/q^{n-1}$.
		\item[(ii)] If $S\not\cong \PSL_n(q)$ then $\fpr(g,G/M) < 1/q^{uk} + 1/q^{cn}$ for an absolute constant $c>0$.
	\end{itemize}
\end{theorem}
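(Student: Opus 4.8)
The plan is to convert the fixed point ratio into a proportion of invariant subspaces and then to count, keeping careful track of constants. Since $M$ is the full stabilizer in $G$ of a subspace (resp.\ flag or antiflag) $W$ of the natural module $V=\F_{q^u}^n$, for every $g\in G$ the fixed points of $g$ on $G/M$ biject with the $g$-invariant members of the orbit $\Omega=W^G$; hence $\fpr(g,G/M)$ is the proportion of elements of $\Omega$ that $g$ fixes. The denominator is essentially a Gaussian binomial: the number of $k$-subspaces of $V$ is $\binom{n}{k}_{q^u}$, with $q^{uk(n-k)}\le\binom{n}{k}_{q^u}<4\,q^{uk(n-k)}$, and for totally singular or nondegenerate $k$-spaces, or for $k$-flags and $k$-antiflags, the count differs from this only by a factor that is bounded above and below by an absolute constant times the relevant power of $q$. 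So the problem reduces to an upper bound for the number of $g$-invariant objects of the prescribed type, and the real issue is pinning down the constants.

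First I would handle the case where $g$ is inner-diagonal, so a lift $\hat g$ acts $\F_{q^u}$-linearly on $V$ and a subspace is $g$-invariant exactly when it is an $\F_{q^u}[\hat g]$-submodule; here $\hat g$ is non-scalar because $g\ne 1$. The combinatorial heart of \cite{guralnick2000kantor} is that, over all non-scalar linear maps, the number of invariant $k$-subspaces is maximised by an element with an eigenspace $U$ of codimension $1$, for which the invariant $k$-subspaces are exactly those contained in $U$ together with those spanned by a $(k-1)$-subspace of $U$ and the remaining eigenvector, a total of $\binom{n-1}{k}_{q^u}+\binom{n-1}{k-1}_{q^u}$. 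The exact identities $\binom{n-1}{k}_{q^u}\big/\binom{n}{k}_{q^u}=(q^{u(n-k)}-1)/(q^{un}-1)<q^{-uk}$ and $\binom{n-1}{k-1}_{q^u}\big/\binom{n}{k}_{q^u}=(q^{uk}-1)/(q^{un}-1)<q^{-u(n-k)}\le q^{-uk}$ then yield $\fpr(g,G/M)<2/q^{uk}$ in the linear case, and for $k=1$ the sharper $1/q+1/q^{n-1}$ (the second ratio being $q^{-(n-1)}$). For the symplectic, unitary and orthogonal groups the extremal element is instead a transvection or a reflection, and the isometry condition forces the fixed subspaces of the relevant type to come from the nondegenerate part of a codimension-one $\pm1$-eigenspace; this removes the factor $2$, leaving a leading term $1/q^{uk}$ with the analogue of the $\binom{n-1}{k-1}$ summand and all remaining configurations absorbed into an error $O(q^{-cn})$.

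Next I would dispose of the outer elements. If $g$ induces a nontrivial field automorphism of order $r$, a $g$-invariant subspace is (after a twist) defined over the subfield of order $q_0=q^{1/r}\le q^{1/2}$, so the number of $g$-invariant $k$-spaces is of order at most $\binom{n}{k}_{q_0^{u}}\approx q_0^{uk(n-k)}$, which after dividing by $\binom{n}{k}_{q^u}$ is well below $1/q^{uk}$ once $n$ is large (if $g$ also has an inner-diagonal part one passes to $g^r$). If $g$ involves a graph automorphism --- the inverse-transpose automorphism of $\PSL_n$, the graph automorphism of $\POm^+_{2m}$, or the duality exchanging $k$-flags with $k$-antiflags --- then $g$ need not fix any subspace of the usual type, but a fixed $k$-flag or $k$-antiflag $\{U,W\}$ forces $g$ to interchange $U$ and $W$, whence $g^2$ is inner-diagonal and fixes $U$; applying the linear estimate to $g^2$ on the $k$- or $(n-k)$-dimensional space --- the mechanism behind \Cref{t:inverse_transpose_fg} --- gives the bound, with the parity shift $\delta\in\{0,1\}$ accounted for exactly as there. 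The remaining low-rank case, the subgroups $\Or^\pm_n(q)<\Sp_n(q)$ with $q$ even, is treated by the analogous count on the set of quadratic forms polarising to the given symplectic form. Assembling the cases gives (i) and (ii).

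The main obstacle I expect is not any individual case but the uniform bookkeeping of the exact constants. Landing precisely on $2/q^k$ (rather than a weaker $C/q^k$) in the linear case, and on $1/q^{uk}+1/q^{cn}$ with leading constant exactly $1$ elsewhere, forces one to treat the Gaussian binomials and orbit sizes with the exact ratio identities above rather than with crude estimates, and --- more delicately --- to verify that no non-scalar element other than the ``codimension-one eigenspace'' type (the transvection or reflection in the form cases) has strictly more invariant objects of the given type; for small $n$, small $q$, the various subspace types, and the two-orbit phenomena for maximal totally singular subspaces in orthogonal groups, this last verification is where the actual work of refining the qualitative estimate of \cite{guralnick2000kantor} gets done.
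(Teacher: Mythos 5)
The first thing to note is that the paper does not prove this theorem at all: its ``proof'' is a citation, namely that (i) is \cite[Proposition 3.1 and Lemma 3.12]{guralnick2000kantor} and (ii) is \cite[Propositions 3.15, 3.16 and 3.18]{guralnick2000kantor}, together with the small observation that for nondegenerate $k$-spaces one applies Proposition 3.16 with $k$ replaced by $n-k$. So your proposal is of a different nature -- an attempted reconstruction of the arguments of \cite{guralnick2000kantor} -- and judged as such it has a genuine gap at its core. The entire difficulty of the statement is the claim you simply assert: that among all non-scalar $\F_{q^u}$-linear maps the number of invariant $k$-spaces is maximised by an element with a codimension-one eigenspace, and that in the symplectic, unitary and orthogonal cases the analogous extremal analysis over all element types (semisimple, unipotent, mixed, and all outer classes) and all subspace types leaves a leading term $1/q^{uk}$ with everything else absorbed into $1/q^{cn}$. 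That uniform verification is precisely what the cited propositions of \cite{guralnick2000kantor} do; the Gaussian-binomial ratio identities you compute for the extremal element are the easy final step, and your closing paragraph concedes that the extremality check is ``where the actual work gets done''. As it stands the proposal is a plan in which the crux is deferred, not proved.

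There is also a step that fails as written. For elements involving a graph automorphism you reduce to the inner-diagonal element $g^2$ and invoke the linear estimate, but when $g$ is an involution inducing a graph automorphism (or more generally when $g^2$ is scalar) this reduction yields nothing, and these classes must be in the count since the theorem quantifies over every $1\neq g\in G$ and every $k$-flag/antiflag action. For such $g$ one needs a direct count via the bilinear or sesquilinear form attached to $g$: fixed $k$-flags correspond to totally singular $k$-spaces and fixed $k$-antiflags to nondegenerate $k$-spaces of that form, which is exactly how the paper argues in the proof of \Cref{l:improvement_fpr}. The same caveat applies to your parenthetical ``pass to $g^r$'' for field automorphisms with nontrivial inner part, since $g^r$ may again be scalar and one must instead count subspaces defined over the subfield for a twisted form. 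These cases are repairable (the resulting fixed-point ratios are far below $2/q^k$), but as proposed the mechanism would fail on them, so the argument is incomplete beyond mere bookkeeping of constants.
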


\begin{proof}
	(i) is contained in \cite[Proposition 3.1 and Lemma 3.12]{guralnick2000kantor}, and (ii) is contained in \cite[Propositions 3.15, 3.16 and 3.18]{guralnick2000kantor}. (In case of nondegenerate $k$-spaces, apply \cite[Proposition 3.16]{guralnick2000kantor} replacing $k$ by $n-k$.) 
\end{proof}

We need an improvement for $1$-flags and $1$-antiflags. Note that the fixed point ratio on $1$-spaces is bounded by approximately $1/q$, so the fixed point ratio on pairs ($1$-space, hyperplane) is bounded by approximately $1/q^2$. One does not expect any substantial perturbation in restricting to flags or antiflags; we now confirm this fact.

In the following lemma, we could decrease the constant $4$. We do not bother, as the bounds in \cite{guralnick2000kantor} are accurate also for small $|S|$. Moreover, we assume $n\ge 4$, just because for $n=3$ and $g$ a field automorphism of order $2$, the fixed point ratio on $1$-flags is roughly $q^{-3/2}$.

\begin{lemma}
\label{l:improvement_fpr}
Let $S=\PSL_n(q)$, with $n\ge 4$, and let $1\neq g\in \Aut(S)$. Let $F=F(n,q)$ be the set of $1$-flags, and let $A=A(n,q)$ be the set of $1$-antiflags. Then for $\Omega \in \{F,A\}$ we have
\[
\fpr(g,\Omega)< \frac{1}{q^2} + \frac{4}{q^{n-1}}.
\].
\end{lemma}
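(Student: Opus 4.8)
The plan is to reduce the count of fixed flags/antiflags to a count of fixed $1$-spaces (and hyperplanes), for which we already have \Cref{t:guralnick_kantor}(i), and then account separately for the small extra contribution coming from elements that behave atypically on $1$-spaces. Fix $\Omega \in \{F,A\}$ and write $N = |\Omega|$; both $F$ and $A$ have size roughly $q^{2(n-1)}/(\text{bounded})$, and more precisely $|F|$ equals the number of pairs (point, hyperplane) with the point in the hyperplane, and $|A|$ the number with the point not in the hyperplane. If $g$ fixes a flag $\{U,W\}$ then $g$ fixes the $1$-space $U$; if $g$ fixes an antiflag $\{U,W\}$ then $g$ fixes both $U$ and the hyperplane $W$. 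So the first step is the crude bound
\[
\fp(g,\Omega) \le \sum_{U : U^g = U} \#\{\,\text{hyperplanes } W : W^g = W,\ U\ \text{in/not in}\ W\,\}.
\]
For a fixed $1$-space $U$ with $U^g=U$, the element $g$ acts on the quotient $V/U$ (if $U$ is truly $g$-invariant, i.e.\ $g$ is a semilinear map normalizing $U$), and a hyperplane $W$ containing $U$ invariant under $g$ corresponds to a $g$-invariant hyperplane of $V/U$; similarly a $g$-invariant hyperplane $W$ with $U \not\le W$ corresponds to a $g$-invariant complement to $U$. The key point is that, generically, $g$ fixes at most $O(1/q^{\,n-2})$ of the hyperplanes of an $(n-1)$-dimensional space — this is again \Cref{t:guralnick_kantor}(i) applied in dimension $n-1$ (hyperplanes of $V/U$ are $1$-spaces of the dual), giving roughly $1/q + 1/q^{n-2}$, which after multiplying by the fixed-point ratio $\approx 1/q$ on $1$-spaces of $V$ yields the main term $1/q^2$. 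The subtlety is that $g|_{V/U}$ could be trivial (fixing all hyperplanes), or $g$ itself could be such that it fixes many $1$-spaces; these are exactly the contributions that must be bounded by the error term $4/q^{n-1}$.

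Concretely, I would split the invariant $1$-spaces $U$ into two types. Type (a): those $U$ such that $g$ does not act as a scalar on $V/U$ — for these, the number of invariant hyperplanes on the relevant side is $\le 1/q + 1/q^{n-2}$ fraction by \Cref{t:guralnick_kantor}(i) (or an elementary argument: a nonscalar linear/semilinear map on an $(n-1)$-space fixes at most $q^{n-3}+\cdots$ hyperplanes, i.e.\ a proportion $< 2/q^{?}$; one must be a little careful with semilinear maps, cf.\ the stated exception at $n=3$, which is why $n\ge 4$ is assumed). Summing the fixed-point ratio $\fpr(g, \text{$1$-spaces of }V) < 1/q + 1/q^{n-1}$ against this gives a contribution $\ll 1/q^2 + 1/q^{n-1}$ to $\fpr(g,\Omega)$. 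Type (b): those $U$ with $g$ scalar on $V/U$. If $g$ is a non-scalar (modulo scalars) element, the set of such $U$ is highly constrained: $g$ scalar on $V/U$ forces $(g-\mu)$ to have rank $\le 1$ for the scalar $\mu$, so $U \supseteq \mathrm{Im}(g-\mu)$ up to the semilinear twist, and there are at most $q+1$ (or $O(q)$, after accounting for the twist and for the possibly several eligible $\mu$) such $U$. For antiflags one then also needs $W$ invariant with $U\not\le W$; for flags all hyperplanes through $U$ work, of which there are $\approx q^{n-2}$, but then $|F|\approx q^{n-1}\cdot q^{n-2}/(\cdots)$ so this contributes $O(q)\cdot q^{n-2} / |F| \ll 1/q^{n-1}$. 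A symmetric dual argument (swapping the roles of $U$ and $W$, i.e.\ points and hyperplanes) handles the case where $g$ is scalar on $W$ itself.

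Finally I would deal with the corner cases: $g$ a scalar modulo $S$ is excluded since $g\ne 1$ in $\Aut(S)$ and we may take $g$ of prime order; $g$ a graph automorphism (inverse-transpose) interchanges the point and hyperplane and so fixes no flag or antiflag unless composed with something fixing a self-dual configuration, in which case $\fp(g,\Omega)$ drops and the bound is easy; field and graph-field automorphisms contribute through their fixed subfield/fixed subgroup structure, and here the bound $1/q + 1/q^{n-1}$ on $1$-spaces from \Cref{t:guralnick_kantor}(i) already absorbs them (this is exactly why the statement is phrased for $\Aut(S)$ and why $n\ge 4$ is needed, to avoid the $q^{-3/2}$ anomaly at $n=3$). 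Collecting the Type (a) bound $\ll 1/q^2 + 1/q^{n-1}$ and the Type (b) bound $\ll 1/q^{n-1}$, and being slightly generous with constants (which is why the statement allows the generous $4/q^{n-1}$), yields $\fpr(g,\Omega) < 1/q^2 + 4/q^{n-1}$.

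I expect the main obstacle to be the careful treatment of semilinear (field and graph-field) automorphisms in Type (a): for these, "acts as a scalar on $V/U$'' and "fixes few hyperplanes of $V/U$'' are statements about $\sigma$-semilinear maps rather than linear ones, where the naive rank bound fails and one must instead invoke the fixed-point-ratio bounds of \cite{guralnick2000kantor} on the $(n-1)$-dimensional space (or equivalently on $V$ itself, at the cost of slightly worse constants), and verify that the exceptional small-dimension behaviour is confined to $n\le 3$. Everything else is elementary double-counting.
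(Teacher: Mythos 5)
Your overall strategy (double-count over $g$-fixed $1$-spaces $U$, then bound conditionally the number of invariant partner hyperplanes, splitting according to whether the induced map on $V/U$ is projectively trivial) is genuinely different from the paper's proof, which reduces to $g$ of prime order modulo scalars and then counts fixed flags/antiflags directly in four cases: semisimple linear (explicit count after splitting $V=V_1\oplus V_2$), unipotent (via $[V,g]$), field automorphisms (fixed flags/antiflags are exactly those of a subfield geometry $F(n,q_0)$, $A(n,q_0)$), and graph/graph-field automorphisms (fixed flags/antiflags are isotropic/nonisotropic points of a bilinear or unitary form). For \emph{linear} $g$ your plan can be made to work: in Type (a) the flag side follows from \Cref{t:guralnick_kantor}(i) applied to hyperplanes of $V/U$, and the antiflag side follows from the elementary observation that the $g$-invariant complements to $U$ (if any) form a coset of the space of intertwining maps $\phi\colon V/U\to U$, which has size $q^{\dim\ker(\bar g-\alpha)}\le q^{n-2}$ unless $\bar g$ is the scalar $\alpha$; and in Type (b) the rank-one argument even gives at most one bad $U$ for $n\ge 3$.

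The genuine gap is exactly where you defer: the semilinear and duality cases, which are not a corner of the argument but roughly half of it, and your proposed patch does not do the job. For $g\in\Gamma\mathrm{L}_n(q)\setminus\GL_n(q)$ (mod scalars), the quantity you need per fixed $U$ on the antiflag side is the number of $g$-invariant \emph{complements} to $U$; this is not the fixed-point count of a subspace action of the induced element on $V/U$, so \Cref{t:guralnick_kantor}(i) in dimension $n-1$ cannot be invoked for it, and the eigenspace/rank-$\le 1$ arguments you use in Types (a)/(b) are no longer available for $\sigma$-semilinear maps. The fallback you implicitly allow, namely $\fp(g,A)\le\fp(g,\text{points})\cdot\fp(g,\text{hyperplanes})$ with the $1/q+1/q^{n-1}$ bounds, is quantitatively insufficient: it yields only $\fpr(g,A)\lesssim 1/q^2+1/q^3$, which exceeds $1/q^2+4/q^{n-1}$ once $n$ is moderately large. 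What saves the day (and is what the paper does) is that after reducing to prime order modulo the centre, such elements are field automorphisms, whose fixed flags and antiflags form a subfield geometry of size about $q^{(2n-3)/r}$, resp.\ $q^{(2n-2)/r}$, so even a crude count gives $\fpr\ll q^{-(n-1)}$; similarly, your graph-automorphism paragraph ("$\fp$ drops and the bound is easy") needs the concrete identification of fixed flags (resp.\ antiflags) with singular (resp.\ nonsingular) $1$-spaces of a unitary or bilinear geometry, giving at most $(q^n-1)/(q-1)$ fixed points and hence $\fpr\le 1/q^{n-2}$. Without these two identifications your argument as written does not close, even though the linear case and the overall bookkeeping are sound.
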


\begin{proof}
Let us record the size of the relevant sets:
\[
|F| = \frac{(q^n-1)(q^{n-1}-1)}{(q-1)^2} \hspace{15pt} |A| = \frac{(q^n-1)q^{n-1}}{q-1}
\]
We may work with a preimage of $g$ in $\GamL_n(q)\rtimes \gen \gamma$, where $\gamma$ is the inverse-transpose map, and we may assume $g$ has prime order modulo $\Z(\GL_n(q))$.

Let us assume first $g\in \GL_n(q)$ with $g$ semisimple. If $g$ fixes a $1$-space, then $g$ does not act homogeneously; so write $V=V_1\oplus V_2$ where there is no nontrivial $\gen g$-homomorphism from one space to the other. Say that $\dim(V_1)=n_1$ and $\dim(V_2)=n_2$ with $n_1\le n_2$. We may assume that $g$ acts scalarly on $V_1$ and $V_2$, in which case 
\begin{align*}
&\fp(g,F) = \frac{q^{n_1}-1}{q-1}\left(\frac{q^{n_1-1}-1}{q-1} + \frac{q^{n_2}-1}{q-1} \right) + \frac{q^{n_2}-1}{q-1} \left(\frac{q^{n_2-1}-1}{q-1} + \frac{q^{n_1}-1}{q-1}\right) \\
&\quad \le \frac{1}{(q-1)^2}\left((q^{n_2}-1)(q^{n_2-1}-1) 
+ 3(q^n-1)\right).
\end{align*}
(In the first equality, by convention $(q^{n_1-1}-1)/(q-1)=0$ if $n_1=1$.) Therefore
\[
\fpr(g,F) \le \frac{1}{q^{2(n-n_2)}} + \frac{3}{q^{n-1}-1} \le \frac{1}{q^2} + \frac{4}{q^{n-1}}.
\]
One proves similarly the bound for $\fpr(g,A)$.

Assume now $g\in \GL_n(q)$ with $g$ unipotent. Set $m=\dim([V,g])$. We count fixed points on $A$. Consider a $1$-space $W$ fixed by $g$. If there exists a complement $W'$ of $W$ fixed by $g$, then $[V,g]=[W',g]\le W'$, so $W\cap [V,g]=0$ and the number of choices for $W'$ is equal to the number of complements of $(W+[V,g])/[V,g]$ in $V/[V,g]$, namely $q^{n-m-1}$. The number of choices for $W$ is at most $(q^{n-m}-1)/(q-1)$,
so
\[
\fp(g,A) \le \frac{(q^{n-m}-1)q^{n-m-1}}{q-1}
\]
from which
\[
\fpr(g,A) \le \frac{q^{n-m}-1}{(q^n-1)q^m} < \frac{1}{q^{2m}}\le \frac{1}{q^2}.
\]
The bound for $\fpr(g,F)$ is similar.

If $g$ is a field automorphism, say of order $r$, then the fixed points on $F$ (resp. $A$) can be identified with $F(n,q_0)$ (resp. $A(n,q_0)$) where $q=q_0^r$ (see for example \cite[proof of Lemma 3.6]{guralnick2000kantor}), and one easily gets the desired bound.

If $g$ is a graph-field automorphism of order $2$, then fixed points on $F$ (resp. $A$) can be identified with singular (resp. nondegenerate) $1$-spaces of a unitary geometry over $\F_q$. If $g$ is a graph automorphism of order $2$, then fixed points on $F$ (resp. $A$) can be identified with singular (resp. nondegenerate) $1$-spaces for a bilinear form over $\F_q$. In particular, in both cases the number of fixed points is at most $(q^n-1)/(q-1)$, so $\fpr(g,F)$ and $\fpr(g,A)$ are at most $1/q^{n-2}$. 
\end{proof}

\section{Proof of \Cref{t:main}}

We are finally ready to prove \Cref{t:main}. The case of groups of bounded rank, as well as the case of large rank and non-subspace subgroups, follow directly from results of Liebeck--Saxl \cite{liebeck1991saxl}, Liebeck--Shalev \cite{liebeck1999shalev}, Guralnick--Larsen--Tiep \cite{guralnick2012larsen_tiep}.

\begin{theorem}
	\label{t:bounded_rank}
	\Cref{t:main} holds if $S$ is a group of Lie type of rank $r$ over $\F_q$ and $q$ is sufficiently large compared to $r$.
\end{theorem}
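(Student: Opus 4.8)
The plan is to deduce \Cref{t:bounded_rank} from \Cref{l:fixed_point_ratio} together with the fixed point ratio estimates available in the literature. Fix $x,y\in\Aut(S)$ with $x\neq 1$ and set $G=\gen{S,x,y}$, an almost simple group with socle $S$ of Lie type of rank $r$ over $\F_q$. If $s\in S$ is such that $\gen{x,sy}\neq G$, then $x$ and $sy$ lie in a common maximal subgroup $M$ of $G$. If $S\leq M$, then $M$ cannot contain both $x$ and $y$ (else $M\supseteq\gen{S,x,y}=G$), so if $x\in M$ then $y\notin M$ and hence $sy\notin M$ for every $s\in S$; thus maximal subgroups containing $S$ contribute nothing. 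Consequently the only relevant $M$ satisfy $SM=G$, and a union bound over $G$-conjugacy classes of such subgroups, followed by \Cref{l:fixed_point_ratio}, gives
\[
\Prob_s\!\left(\gen{x,sy}\neq G\right)\ \leq\ \sum_{[M]\,:\,SM=G}\Prob_s\!\left(x,sy\in M^g\ \text{for some}\ g\in G\right)\ \leq\ \sum_{[M]\,:\,SM=G}\fpr(x,G/M).
\]

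Next I would bound the right-hand side using two inputs, both imported from the cited results. First, the number $N(r)$ of $G$-conjugacy classes of maximal subgroups of $G$ is bounded in terms of $r$ alone: for $S$ classical this follows from Aschbacher's theorem together with Liebeck's bound on the $\mathcal{S}$-subgroups, and for $S$ of exceptional type the rank is already bounded and the maximal subgroups are classified. Second, by the fixed point ratio bounds of Liebeck--Saxl \cite{liebeck1991saxl}, Liebeck--Shalev \cite{liebeck1999shalev} and Guralnick--Larsen--Tiep \cite{guralnick2012larsen_tiep} (and Guralnick--Kantor \cite{guralnick2000kantor} in the subspace case), one has $\fpr(x,G/M)\leq\phi(q,r)$ for \emph{every} maximal $M$ of $G$ and \emph{every} $1\neq x\in G$, where $\phi(q,r)\to 0$ as $q\to\infty$ with $r$ fixed; concretely these references give a bound of the shape $\fpr(x,G/M)\ll|x^G|^{-c}$ for an absolute $c>0$, and the minimal size of a nontrivial conjugacy class of $G$ is at least a fixed positive power of $q$ depending only on $r$ (e.g.\ a long root element already has this property, and outer elements have much larger classes once $q$ is large). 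Combining, $\Prob_s(\gen{x,sy}\neq G)\leq N(r)\,\phi(q,r)\to 0$ as $q\to\infty$, so once $q$ is large enough compared to $r$ the failure probability is at most $1/2$, which proves the theorem with $\eps=1/2$ (in fact it tends to $0$).

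The main obstacle is not in the argument above, which is just a union bound, but in the quoted inputs. The crucial point is that the fixed point ratio estimates apply to \emph{all} of $G$ (including field, graph and graph-field automorphisms), not merely to inner elements; this is precisely what lets \Cref{l:fixed_point_ratio} handle an arbitrary coset $Sy$ and an arbitrary $x\in\Aut(S)$, whereas the earlier results of Liebeck--Shalev and Guralnick--Kantor--Saxl were phrased for $x,y\in S$. The other slightly delicate point is the bound on $N(r)$: for classical groups of bounded dimension one must know that, as $q$ varies, only boundedly many conjugacy classes of $\mathcal{S}$-subgroups occur, which is part of Liebeck's analysis. I expect the bulk of the write-up to consist in locating the precise forms of these statements in \cite{liebeck1991saxl,liebeck1999shalev,guralnick2012larsen_tiep,guralnick2000kantor} rather than in any new estimate.
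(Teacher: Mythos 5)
Your proposal follows essentially the same route as the paper: reduce to core-free maximal subgroups (those with $SM=G$), apply \Cref{l:fixed_point_ratio} together with a union bound over conjugacy classes of maximal subgroups, and invoke fixed point ratio bounds that are valid for \emph{all} nontrivial elements of the almost simple group $G$ (the paper simply quotes Liebeck--Saxl \cite{liebeck1991saxl}, which gives $\fpr(x,G/M)\ll q^{-1/2}$ for every primitive action and every $1\neq x\in G$, so no separate treatment of outer elements or of subspace actions is needed in bounded rank). The one genuine inaccuracy is your claim that the number of conjugacy classes of maximal subgroups of $G$ is bounded in terms of $r$ alone. This is false: subfield subgroups (Aschbacher's class $\mathcal{C}_5$) contribute one class for each prime divisor of $\log_p q$, and this number is unbounded as $q\to\infty$ with $r$ fixed, so Aschbacher plus a bound on the class-$\mathcal{S}$ subgroups cannot give a bound depending only on $r$. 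The paper instead cites \cite[Theorem 1.2]{guralnick2012larsen_tiep}, which bounds the number of classes by $\ll r^6+r\log\log q$; since the fixed point ratio bound decays like a fixed power of $q$, the product $\log\log q\cdot q^{-1/2}$ still tends to $0$ as $q\to\infty$ with $r$ fixed, so with this correction your union bound goes through unchanged and yields the theorem (indeed with failure probability tending to $0$), exactly as in the paper.
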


\begin{proof}
	Let $M$ be a maximal subgroup of $G=\gen{S,x,y}$ containing $x$ and some element of $Sy$, so $SM=G$. By \cite{liebeck1991saxl}, $\fpr(x,G/M)\ll 1/q^{1/2}$ (and in fact, if $S\not\cong \PSL_2(q)$ then $\fpr(x,G/M)\ll 1/q$). Moreover, from \cite[Theorem 1.2]{guralnick2012larsen_tiep} the number of conjugacy classes of maximal subgroups of $G$ is $\ll r^6 + r\log\log(q)$. Therefore, by \Cref{l:fixed_point_ratio}, summing over all conjugacy classes of maximal subgroups of $G$ containing $x$ and an element of $Sy$, we get that the probability that $x$ and a random element of $Sy$ do not generate $G$ is $\ll_r \log\log(q)/q^{1/2}$, which tends to zero as $q\to \infty$ with $r$ fixed.
\end{proof}

We will now work with classical groups of large rank. Let $\F_{q^u}^n$ be the natural module, where $u=2$ if $S$ is unitary and $u=1$ otherwise.

We now choose $A\subseteq Sy$ suitably. For the reader's convenience, we list here our choices. The reader is advised to move directly to the proof of \Cref{t:subspace}, where the motivation for the choices will be apparent and the argument will be easier to follow -- this list should only be used as a reference. See \Cref{notation} for the notation used.

See also the discussion around \eqref{eq:intro_kantor}, which highlights the idea of the proof of \Cref{t:subspace}.

\begin{choice}
\begin{itemize}[$\diamond$]
	\item If $S=\SL_n(2)$, we choose $A=A_n(2,3,\GL)$. (Note this holds for both the cases $y\in S$ and $y\not\in S$.)
	\item If $S=\PSL_n(3)$ and $y\in \PGL_n(3)$, we choose $A$ as the image modulo scalars of $A_n(3,1,C, \GL)$, where $C$ is a coset lifting $Sy$.
	\item If $S=\PSL_n(3)$ and $y\not\in\PGL_n(3)$, we choose $A=Sy$.
	\item If $S=\Sp_n(2)$ we choose $A=A_n(2,2,\Sp)$.
	\item If $S=\Omega^\pm_n(2)$ and $y\in S$, we choose $A=A_n(2,2,S,\Or^\pm)$.
	\item If $S=\Omega^\pm_n(2)$ and $y\not\in S$, we choose $A=A_n(2,4,O,\Or^\pm)$.
	\item If $S=\POm^\varepsilon_n(3)$, set $L=\PSO^\varepsilon_n(3)$ and $O=\PO^\varepsilon_n(3)$. 	If $n$ is even and $y\in L$, we choose $A$ as the image of $A_n(3,1,S,\Or^\varepsilon)\cap X_C$, where $C\in \{\Omega,S\}$ is determined by the condition $A\subseteq Sy$ (see \Cref{l:omega_two_cosets} for the notation $X_\Omega$ and $X_S$).
	\item If $S=\POm^\varepsilon_n(3)$ with $n$ even and 	$y\in O\sm L$ then we choose $A$ as the image of the set of elements acting as a reflection on a nondegenerate $2$-space, and belonging to $A_{n-2}(3,3,S,\Or^\pm)\cap X_C$ on the perpendicular complement, where $C\in \{\Omega,S\}$ is determined by the condition $A\subseteq Sy$.
	\item If $S=\POm^\varepsilon_n(3)$ with $n$ odd and $y\in L$ (resp. $y\in O\sm L$) then we choose $A$ as the image of the set of elements acting trivially (resp. as $-1$) on a nondegenerate $1$-space, and belonging to $A_{n-1}(3,3,S,\Or^\pm)\cap X_C$ on the perpendicular complement, where $C\in \{\Omega,S\}$ is determined by the condition $A\subseteq Sy$.
	\item In all other cases we choose $A=Sy$.
\end{itemize}

\end{choice}

Since by \Cref{l:resume_limit_proportion} $|A|/|S|\gg 1$, it is enough to prove \Cref{t:main} by picking random elements from $A$ (note also that $A=Sy$ if $q\ge 4$). In particular, we want to show that the probability $P$ that $x$ and a random element of $A$ do not generate $\gen{S,x,y}$ is bounded away from $1$.  
We have 
\begin{equation}
\label{eq:final_bound}
P\le P_1 + P_2,
\end{equation}
where $P_1$ is the probability that $x$ and a random element of $A$ are contained in a non-subspace core-free maximal subgroup of $\gen{S,x,y}$, or a $k$-subspace subgroup with $n/3< k\le n/2$; and $P_2$ is the probability that $x$ and a random element of $A$ are contained in a $k$-subspace subgroup with $1\le k\le n/3$. (The reason why we include subspace subgroups of large dimension into $P_1$ is merely formal, and relies on the fact that in \Cref{l:expectation_eigenvalue_free,l:expectation_orthogonal_groups,l:expectation_graph}  we have excluded the case of totally singular spaces of dimension $n/2-O(1)$; see \Cref{rem:totally_singular_large} for comments.) 

\begin{theorem}
	\label{t:non_subspace}
	$P_1\le q^{-c n}$ where $c>0$ is absolute.	
\end{theorem}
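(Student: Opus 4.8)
The plan is to bound $P_1$ by summing the contributions of each relevant maximal subgroup via \Cref{l:expectation}, exploiting the fact that we pick random elements from $A$, not merely from $Sy$. First I would split $P_1$ into the contribution from non-subspace core-free maximal subgroups and the contribution from $k$-subspace subgroups with $n/3 < k \le n/2$. For the non-subspace part, the key input is that in large rank the relevant maximal subgroups $M$ satisfy $\fpr(x,G/M) \le q^{-cn}$ for an absolute $c>0$ (this follows from the work of Liebeck--Saxl \cite{liebeck1991saxl} and Guralnick--Larsen--Tiep \cite{guralnick2012larsen_tiep} — the same package cited for \Cref{t:bounded_rank}, but now used in the complementary regime where the rank is large so the fixed point ratio is tiny), together with the fact that the number of conjugacy classes of such subgroups grows only polynomially in $n$. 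Applying \Cref{l:fixed_point_ratio} (the special case $A = Sy$ suffices here, but a fortiori \Cref{l:expectation} with our smaller $A$) and summing the geometric-type bound over all such classes absorbs the polynomial factor and yields $q^{-c'n}$ for a slightly smaller absolute constant $c'$.

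For the subspace subgroups with $n/3 < k \le n/2$, I would apply \Cref{l:expectation}, writing the contribution of a $k$-subspace subgroup $M$ as $\fpr(x,G/M)$ times the expectation \eqref{eq:average_formula}. For $x$, the fixed point ratio estimate from \Cref{t:guralnick_kantor} gives $\fpr(x,G/M) \ll q^{-uk} + q^{-cn} \ll q^{-uk}$, which in this range of $k$ is already $\ll q^{-un/3}$. For the expectation, \Cref{l:expectation_eigenvalue_free,l:expectation_orthogonal_groups,l:expectation_graph} show it equals some $a_k(q^u, t, X) + o_{q,t}(1)$, which is at worst bounded by an absolute constant (indeed bounded away from $1$). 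Hence each such subgroup contributes at most $O(q^{-un/3})$, and the number of $\Or^\varepsilon_n(q)$-orbits (or $\GL$-conjugacy classes) of $k$-subspaces of each relevant type, over all $k$ with $n/3 < k \le n/2$, is bounded by a polynomial in $n$; summing gives $q^{-cn}$ for an absolute $c>0$. I would need to double-check that every type of subspace subgroup in the ``large $k$'' range is covered by one of \Cref{l:expectation_eigenvalue_free,l:expectation_orthogonal_groups,l:expectation_graph} — recalling that totally singular $k$-spaces with $n/2-k = O(1)$ were deliberately deferred into $P_1$ precisely because the lemmas exclude them, so for those I fall back on the crude bound: the expectation is still bounded (\Cref{rem:totally_singular_large}) and $\fpr(x,G/M) \ll q^{-uk} \ll q^{-cn}$ since $k$ is close to $n/2$, and there are only $O(1)$ such classes.

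The main obstacle I anticipate is bookkeeping rather than a genuine mathematical difficulty: making sure the case division by type of subspace subgroup is exhaustive for $n/3 < k \le n/2$, that the correct fixed point ratio bound from \Cref{t:guralnick_kantor} is invoked in each case (in particular the $\PSL_n(q)$ versus non-$\PSL_n(q)$ dichotomy, and the flag/antiflag cases for $\PSL$ where one uses \Cref{l:improvement_fpr} for $k=1$ but that is not relevant here since $k > n/3$), and that the polynomial count of conjugacy classes is uniform in all the parameters. One subtlety worth flagging is the coset $\GL_n(q)\tau$: here the relevant subspace-type subgroups are flag and antiflag stabilizers, so I use \Cref{l:expectation_graph} together with the antiflag/flag fixed point ratio bounds, and \Cref{t:inverse_transpose_fg} guarantees the expectations reduce to symplectic proportions which are again bounded by \Cref{l:resume_limit_proportion}. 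Once all cases are in place, the final estimate is a finite sum of terms each $\ll q^{-cn}$ against a polynomial-in-$n$ number of summands, which is $\ll q^{-c'n}$ for a new absolute constant; choosing $c$ in the statement to be this $c'$ completes the proof.
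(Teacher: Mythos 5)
Your decomposition of $P_1$ matches the paper's, but there are two concrete problems with how you propose to close the estimate. First, the non-subspace half rests on the claim that $\fpr(x,G/M)\le q^{-cn}$ for non-subspace maximal subgroups, and the sources you name do not deliver this: Liebeck--Saxl \cite{liebeck1991saxl} gives only $\fpr(x,G/M)\ll q^{-1/2}$ (that is the bounded-rank input of \Cref{t:bounded_rank}), which for fixed $q\in\{2,3\}$ is just a constant, while \cite{guralnick2012larsen_tiep} only counts conjugacy classes of maximal subgroups. With those inputs alone the non-subspace contribution is $O\bigl((n^6+n\log\log q)\,q^{-1/2}\bigr)$, nowhere near $q^{-cn}$ and not even $o(1)$ for fixed $q$. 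The exponentially small fixed point ratio for non-subspace actions in large rank is Liebeck--Shalev \cite[Theorem 1.1]{liebeck1999shalev}, and this is exactly what the paper's proof invokes.

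Second, the passage from $Sy$ to $A$ is not addressed correctly: $P_1$ is a probability over a random element of $A$, and your parenthetical ``the special case $A=Sy$ suffices, but a fortiori \Cref{l:expectation} with our smaller $A$'' is not a fortiori. For a non-subspace $M$ nothing you cite controls $\frac{1}{|A|}\sum_{g\in A}\fp(g,G/M)$, and shrinking $A$ can only increase that expectation (up to a factor $|Sy|/|A|$). The needed fix -- and the whole of the paper's argument here -- is that $|A|/|S|\gg 1$ by \Cref{l:resume_limit_proportion}, so the probability over $A$ is at most $|Sy|/|A|\ll 1$ times the probability over $Sy$, and the latter is bounded by $q^{-cn}$ via \Cref{l:fixed_point_ratio}, the class count of \cite{guralnick2012larsen_tiep}, and a union bound, using \cite{liebeck1999shalev} for non-subspace subgroups and \Cref{t:guralnick_kantor} for $k$-subspace subgroups with $n/3<k\le n/2$ (where $q^{-uk}\le q^{-n/3}$). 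This same observation makes your detour through \Cref{l:expectation_eigenvalue_free,l:expectation_orthogonal_groups,l:expectation_graph} unnecessary for $P_1$: any $O(1)$ bound on the expectation suffices against $\fpr(x,G/M)\ll q^{-un/3}$, which is just as well, since those lemmas do not literally apply to all choices of $A$ occurring in the proof of \Cref{t:subspace} (for instance the sets intersected with $X_C$ from \Cref{l:omega_two_cosets} for $\POm^\varepsilon_n(3)$, or boundary cases such as $k=n/2$), whereas the crude comparison with $Sy$ covers every case at once.
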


\begin{proof}
If $M$ is non-subspace subgroup, then by \cite[Theorem 1.1]{liebeck1999shalev} we have $\fpr(x,G/M) \le q^{-c' n}$ for some absolute $c' >0$. The same is true for a $k$-subspace subgroup with $n/3<k\le n/2$ by \Cref{t:guralnick_kantor}. Since by \cite{guralnick2012larsen_tiep} the number of conjugacy classes of maximal subgroups is $\ll n^6 + n\log \log q$, by \Cref{l:fixed_point_ratio} and a union bound we get that the probability that $x$ and a random element of $Sy$ are contained in such a subgroup is $\le q^{-c n}$ where $c$ is absolute. Since $|A|/|S|\gg 1$ by \Cref{l:resume_limit_proportion}, we get the same bound for $P_1$.
\end{proof}

Let us now move to the estimation of $P_2$. We use all the results from \Cref{sec:generating_functions,sec:group_theory_preliminaries}. 

\begin{theorem}
\label{t:subspace}
If $n$ is sufficiently large then $P_2<0.91$.
\end{theorem}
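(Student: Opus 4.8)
The plan is to run the strategy from \Cref{subsec:idea_proof} with the choice of $A$ listed above. Write $G=\gen{S,x,y}$. For a $k$-subspace subgroup $M$ of $G$ with $1\le k\le n/3$, \Cref{l:expectation} bounds the probability that $x$ and a random element of $A$ lie in a common conjugate of $M$ by $\fpr(x,G/M)\cdot E_M$, where $E_M:=\frac1{|A|}\sum_{g\in A}\fp(g,G/M)$ is the expected number of $k$-spaces (of the relevant type) fixed by an element of $A$. Summing over the conjugacy classes of $k$-subspace subgroups with $1\le k\le n/3$ -- finitely many, with at most a bounded number for each fixed $k$ -- gives $P_2\le\sum_M\fpr(x,G/M)\,E_M$. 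The fixed point ratios are off the shelf: \Cref{t:guralnick_kantor} gives $\fpr(x,G/M)\ll q^{-uk}+q^{-cn}$, and \Cref{l:improvement_fpr} upgrades this to $\fpr(x,G/M)\ll q^{-2}+q^{-(n-1)}$ for $1$-flag and $1$-antiflag stabilizers, so that those enter on the $q^{-2}$ scale rather than the $q^{-1}$ scale.

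The real content is the control of $E_M$. By the choice of $A$ and \Cref{notation}, the elements of $A$ fix no subspace of dimension at most $t$, with two understood exceptions: in the odd-dimensional cosets and in $\Or^\varepsilon_n(2)\sm\Omega^\varepsilon_n(2)$ an element of $A$ fixes at most one $1$-space, and in $\Or^\varepsilon_n(2)\sm\Omega^\varepsilon_n(2)$ also at most one $2$-space. Consequently $E_M=0$ for every $k$-subspace subgroup with $k\le t$ apart from these exceptional $1$- and $2$-subspace subgroups; and for an exceptional one, since each $g\in A$ fixes at most one subspace of that dimension, the values $E_M$ over the boundedly many classes of that dimension sum to at most $1$, so those classes contribute at most $\max_M\fpr(x,G/M)$ to $P_2$, namely $\ll q^{-1}$ for $1$-spaces and $\ll q^{-2}$ for $2$-spaces. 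For $k>t$ we apply \Cref{l:expectation_eigenvalue_free,l:expectation_orthogonal_groups,l:expectation_graph} -- together with \Cref{l:omega_two_cosets}, which is precisely what separates the two cosets of $\Omega^\varepsilon_n(q)$ in $\SO^\varepsilon_n(q)$ in odd characteristic -- to get $E_M=a_k(q^u,t,X)+o_{q,t}(1)$ for a suitable $X\in\{\GL,\Sp,\Or^\pm\}$; then \Cref{boun} (and the corresponding boundedness statements in \Cref{sympodd,sympeven}), together with $a_k(q^u,t,X)<1$ for the finitely many small $k$ not governed by the limit, give $E_M\le B+o(1)$ for an absolute constant $B<1$, which can in fact be taken well below $1$.

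Putting these together,
\[
P_2\;\le\;\sum_{M\text{ exceptional}}\fpr(x,G/M)\;+\;(B+o(1))\sum_{M:\ t<k\le n/3}\fpr(x,G/M)\;+\;o(1),
\]
and it remains to run through the cases of the \textbf{Choice of $A$} and verify the right-hand side stays below $0.91$ for $n$ large. If $q\ge4$ then $A=Sy$, the first sum is empty, and the second sum (now with $B=1$) is a geometric series in $q^{-1}\le\tfrac14$ already far below $0.91$. If $q\in\{2,3\}$, the values $t\le4$ fixed in each case make the second sum small (a geometric tail starting at $q^{-(t+1)}$), while the first sum is nonempty only for orthogonal groups over $\F_2$ in the outer coset, where it contributes at most roughly $\tfrac12+\tfrac14$. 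The tightest situation is $S=\Omega^\pm_n(2)$ with $y\notin S$: there $t=4$, so $k=3,4$ contribute nothing; the exceptional nonsingular $1$-space and nondegenerate $2$-space stabilizers contribute at most $\tfrac12+\tfrac14+o(1)$; and $\sum_{k\ge5}$ adds only a bounded count times $q^{-5}$ times $B$, for a total safely below $0.91$. I expect the principal work to be exactly this case-by-case bookkeeping -- pinning down for each coset which low-dimensional subspace subgroups are "exceptional" and checking that elements of $A$ fix a unique subspace there, and dealing with the orthogonal odd-characteristic coset distinction via \Cref{l:omega_two_cosets} -- while the fixed point ratio and expectation inputs are already fully in hand from \Cref{sec:generating_functions,sec:group_theory_preliminaries}.
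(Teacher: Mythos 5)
Your overall strategy is the same as the paper's: bound $P_2$ by $\sum_M \fpr(x,G/M)E_M$ via \Cref{l:expectation}, kill the terms with $k\le t$ by the choice of $A$ (with the "exceptional" classes where elements of $A$ fix exactly one subspace contributing their fixed point ratio once), control $E_M$ for $k>t$ by \Cref{l:expectation_eigenvalue_free,l:expectation_orthogonal_groups,l:expectation_graph} together with \Cref{l:omega_two_cosets}, and use \Cref{t:guralnick_kantor,l:improvement_fpr} for the fixed point ratios. For $q\in\{2,3\}$ your outline matches the paper's computation in spirit (though the exceptional classes are not confined to $\Omega^\pm_n(2)$ in the outer coset: they also occur for $\Sp_n(2)$, where the two classes $\Or^\pm_n(2)<\Sp_n(2)$ carry total expectation $1$, for $\SL_n(2)$ with $n$ odd via $1$-antiflags, and for the orthogonal groups over $\F_3$).

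The genuine gap is in your treatment of $q\ge 4$. You assert that with $A=Sy$ the sum over subspace classes is ``a geometric series in $q^{-1}\le\frac14$ already far below $0.91$.'' This undercounts: for each $k$ there are several conjugacy classes of $k$-subspace subgroups (totally singular and nondegenerate spaces, flags and antiflags when a graph automorphism is present, and -- crucially -- the subgroups $\Or^\pm_n(q)<\Sp_n(q)$ for $q$ even, which \Cref{t:guralnick_kantor} counts among the $1$-subspace subgroups, each with fixed point ratio $\approx 1/q$). Carrying out the count as the paper does, $\Sp_n(4)$ gives $2/q+1/(q-1)+1/(q^2-1)=9/10+O(nq^{-cn})$, and $\PSL_n(3)$ with $y\notin\PGL_n(3)$ gives $8/9+O(3^{-n})$; these, not the $\Omega^\pm_n(2)$ outer coset (which comes to $41/48$), are the tight cases, and $9/10$ is precisely why the constant in the statement is $0.91$. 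So the conclusion survives, but your stated reasoning for $q\ge4$ (a single geometric series summing to about $1/3$) would not establish it: the case-by-case bookkeeping of class multiplicities, including the $\delta/q$ and $\gamma$ terms in the symplectic and orthogonal cases, is needed for $q\ge4$ just as much as for $q\in\{2,3\}$, and your bound must be checked against $0.91$ with essentially no room to spare.
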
 
\begin{proof}
We consider in turn all the families of simple classical groups. 

\textbf{Case 1:} $S=\PSL_n(q)$. If $q\ge 4$  we have $A=Sy$, and by \Cref{t:guralnick_kantor}, \Cref{l:fixed_point_ratio} and a union bound over all conjugacy classes of $k$-subspace subgroups with $1\le k \le n/3$ we get
\begin{equation}
	\label{eq:SL_n}
	P_2\le 2\left(\frac{1}{q} + \frac{1}{q^{n-1}} + \sum_{k=2}^{\infty} \frac{2}{q^k} \right) = \frac{2}{q} +  \frac{4}{q(q-1)} + \frac{2}{q^{n-1}}.
\end{equation}
(The factor $2$ at the beginning comes for the following reason: If $\gen{S,x,y}\le \PGamL_n(q)$, the number of fixed $k$-spaces equals the number of fixed $(n-k)$-spaces; if $\gen{S,x,y}\not\le \PGamL_n(q)$, we are counting both flags and antiflags.). Since $q\ge 4$, the right-hand side of \eqref{eq:SL_n} is $\le 5/6 + O(q^{-cn})$, and we are done. 

Assume now $S=\SL_n(2)$. For clarity in the exposition, we will keep using the variable $q$, and only in the end specialize to $q=2$.  Possibly replacing $y$ by $xy$, we assume that if $y\in S$ then $x\in S$. (We do this simply because we did not state a lemma bounding the expectation on flags or antiflags for the case $A\subseteq S$.) By \Cref{l:expectation_eigenvalue_free,l:expectation_graph}, if $n$ is large then the expected number of fixed points of an element of $A$ is at most $1$. Moreover, by the definition of $A$, clearly it is zero when $1\le k \le 3$, except for $1$-antiflags, where it is equal to $1$ when $n$ is odd and equal to $0$ when $n$ is even.

In particular, we can subtract $2/q+4/q^2 + 4/q^3$ from \eqref{eq:SL_n}, and by \Cref{l:improvement_fpr} add $1/q^2 + 4q^{1-n}$ coming from $1$-antiflags. Put it otherwise, by a union bound,  \Cref{l:expectation,l:expectation_eigenvalue_free,l:improvement_fpr} we get
\[
P_2\le \frac{1}{q^2}+ \frac{4}{q^{n-1}} + 2\sum_{k=4}^{\infty} \frac{2}{q^k} = \frac{1}{q^2} + \frac{4}{q^3(q-1)} + \frac{4}{q^{n-1}}  = \frac{3}{4} + \frac{4}{2^{n-1}} 
\]
where in the last equality we specialized $q=2$.

Assume now $S=\PSL_n(3)$. This case is similar. If $y\in \PGL_n(3)$ by the definition of $A$ we can substract $2/q$ from \eqref{eq:SL_n}, and we get a bound of $2/3$. If $y\not\in \PGL_n(3)$ we have $A=Sy$; by \Cref{l:improvement_fpr}, we may replace $2/q$ by $2/q^2$ in \eqref{eq:SL_n} and get a bound $8/9 + O(3^{-n})$.

We now proceed with the other cases. In the estimates below, the constant $c$ appearing denotes a positive absolute constant.

\textbf{Case 2:} $S=\PSU_n(q)$.  Here $A=Sy$ and by \Cref{l:fixed_point_ratio,t:guralnick_kantor} and a union bound we get 
\[
P_2\le 2\sum_{k=1}^{\infty} \frac{1}{q^{2k}} + O(n/q^{cn}) =  \frac{2}{q^2-1} + O(n/q^{cn}) \le \frac{2}{3}+O(nq^{-cn}).
\]
(The factor $2$ accounts for the fact that we are considering both totally singular and nondegenerate spaces.) 

\textbf{Case 3:} $S=\PSp_n(q)$. If $q\ge 3$ then $A=Sy$. Since nondegenerate spaces have even dimension, by \Cref{l:fixed_point_ratio,t:guralnick_kantor} and a union bound we get
\begin{align}
	\label{eq:Sp}
	P_2 &\le 2\delta/q+\sum_{k=1}^{\infty} \frac{1}{q^k} +\sum_{k=1}^{\infty} \frac{1}{q^{2k}}  + O(nq^{-cn}) \\
	&= 2\delta/q + \frac{1}{q-1}+\frac{1}{q^2-1} + O(nq^{-cn}) \nonumber 
\end{align}
where $\delta=1$ if $q$ is even, and $\delta=0$ if $q$ is odd. (The term $2\delta/q$ accounts for subgroups $\Or^\pm_n(q)<\Sp_n(q)$ with $q$ even.) Since $q\ge 3$, the quantity is $\le 9/10 + O(q^{-cn})$.

Assume now $S=\Sp_n(2)$. Note that elements of $A$ fix exactly one nondegenerate hyperplane, so 
\[
\frac{1}{|A|}\sum_{g\in A} \left(\fp(g,\Omega_+) + \fp(g,\Omega_-)\right) = 1,
\]
where $\Omega_\pm$ denotes the set of nondegenerate hyperplane of $\pm$ type. Moreover, elements of $A$ fix no singular $1$-space, no singular $2$-space and no nondegenerate $2$-space, so in view of \Cref{l:expectation,l:expectation_eigenvalue_free} we can subtract from \eqref{eq:Sp} $1/q + 1/q + 1/q^2 + 1/q^2$, and we get $P_2 \le 5/6 + O(n2^{-cn})$.

\textbf{Case 4:} $S=\POm^\varepsilon_n(q)$ with $\varepsilon\in\{+,-,\circ\}$. Assume first $q\ge 4$, so $A=Sy$. By \Cref{l:fixed_point_ratio,t:guralnick_kantor} we get 
\begin{align}
	\label{eq:fpr_orthogonal}
	P_2&\le \frac{\delta}{q}+ \sum_{k=1}^{\infty}\frac{1}{q^k}   +2\sum_{k=1}^{\infty} \frac{1}{q^{2k}} + \gamma\sum_{k=0}^{\infty} \frac{1}{q^{2k+1}} + O(nq^{-cn})  \\
	&= \frac{\delta}{q} + \frac{1}{q-1}+\frac{2+\gamma q}{q^2-1}  + O(nq^{-cn}),\nonumber
\end{align}
where $\delta=1$ if $q$ even and $\delta=0$ if $q$ is odd; and $\gamma=0$ if $q$ is even and $\gamma=2$ if $q$ is odd. (Let us make some comments on these parameters. The term $\delta/q$ accounts for nonsingular $1$-spaces with $q$ even. The parameter $\gamma$ is equal to the number of $S$-classes of maximal subgroups that are stabilizers of nondegenerate $k$-spaces with $k$ odd. Note that for $q$ odd and $n$ even these are fused in $\Aut(S)$, while for $q$ odd and $n$ odd they are not.) Since $q\ge 4$ this quantity is $\le 3/4 + O(q^{-cn})$, and we are done.

Assume now $S=\Omega^\pm_n(2)$. If $y\in S$, elements of $A$ fix no nonsingular $1$-space, singular $1$-space, singular $2$-space, nondegenerate $2$-space, so by \Cref{l:expectation,l:expectation_orthogonal_groups} we can subtract $1/q + 1/q+1/q^2+2/q^2$ from \eqref{eq:fpr_orthogonal} and we get $P_2 \le 5/12 +  O(nq^{-cn})$.  If $y\not\in S$, then elements of $A$ fix exactly one singular $1$-space, exactly one nondegenerate $2$-space, and no other space of dimension at most $4$. In particular we can subtract $1/q + 1/q^2 + 1/q^3 + 1/q^4 +1/q^2 + 2/q^4$, and we get $P_2 \le 41/48 +  O(nq^{-cn})$.

Assume finally $S=\POm^\varepsilon_n(3)$. Set $L=\PSO^\varepsilon_n(3)$ and $O=\PO^\varepsilon_n(3)$. Let $X_\Omega$ and $X_S$ be the sets from \Cref{l:omega_two_cosets}.

Assume first $n$ is even and $y\in L$. Set $A_\Omega=X_\Omega \cap A_n(3,1,S,\Or^\varepsilon)$ and  $A_S=X_S \cap A_n(3,1,S,\Or^\varepsilon)$ (note that $A=A_\Omega$ or $A_S$ depending on $y$). Setting $X=A_\Omega\cup A_S$ and   $Y=A_n(q,1,S,\Or^\varepsilon)$, \Cref{l:omega_two_cosets} implies $|A_\Omega|=|A_S|$, and moreover for each $C\in \{\Omega,S\}$  we have
\begin{align}
	\label{eq:expectation_omega}
	&\frac{1}{|A_C|} \sum_{g\in A_C} \fp(g,G/M)\nonumber
	= \frac{1}{|X|} \sum_{g\in X} \fp(g,G/M) \\
	&\quad\le \frac{1|Y|}{|X||Y|} \sum_{g\in Y} \fp(g,G/M).
\end{align}
(Here $M$ is any subspace subgroup of $G=\gen{S,x,y}$.) By \Cref{l:resume_limit_proportion,l:omega_two_cosets} we have 
$|Y|/|X|\to 1$. Therefore, in view of \Cref{l:expectation_eigenvalue_free}, \eqref{eq:expectation_omega} is bounded away from $1$. This is all we need; noting that an element of $A_C$ does not fix any $1$-space and any nondegenerate space of odd dimension, by \Cref{l:expectation,l:expectation_orthogonal_groups}  we can subtract from \eqref{eq:fpr_orthogonal} $1/q+2\sum_k 1/q^{2k+1}$, thereby getting $P_2 \le 5/12 + O(q^{-cn})$.

The other cases are essentially identical.  If $n$ is even and $y\in O\sm L$ the elements of $A$ fix exactly one singular $1$-space, exactly one nondegenerate $2$-space, no other space of dimension at most $3$, and no nondegenerate space of odd dimension, and subtracting the relevant quantities from \eqref{eq:fpr_orthogonal} we get $P_2 < 1/2 + O(q^{-cn})$.

If $n$ is odd, the elements of $A$ fix one nondegenerate $1$-space and no other space of dimension at most $3$, and we get $P_2 < 1/2 + O(q^{-cn})$.

The last case to consider is $n$ even and $y\in \Aut(S)\sm O$. In this case, $A=Sy$, and no element of $Sy$ fixes a nondegenerate space of odd dimension, so we calculate $P_2 \le 3/4 + O(nq^{-cn})$. This concludes the proof in all cases.
\end{proof}

\Cref{t:main} follows now immediately from \Cref{t:bounded_rank,t:non_subspace,t:subspace} and \eqref{eq:final_bound}. 

\section{Appendix: proof of Theorem \ref{ucoset}}
\label{appendix}

In this appendix we sketch the proof of Theorem \ref{ucoset}. 

We require some definitions (in addition to those in Section \ref{cosetsofu}). We let $\Phi_{q^2}^+$ denote the set of monic irreducible polynomials in $\F_{q^2}[z]$ other than the polynomial $z$. We let $\zeta$ be a generator of the cyclic subgroup of order $q+1$ in $\F_{q^2}^{\times}$. For a polynomial $\phi \in \Phi_{q^2}^+$, define
$s(\phi) \in Z_{q+1}$ by 
\begin{equation*}  \zeta^{s(\phi)} = \left\{ \begin{array}{ll}
		(-1)^{deg(\phi)} \phi(0)& \mbox{if $\phi=\tilde{\phi}$} \\
		\phi(0) \tilde{\phi}(0) & \mbox{if $\phi \neq \tilde{\phi}$}
	\end{array} \right. \end{equation*}

As earlier in the paper, we let $\Lambda$ denote the set of partitions of all non-negative integers $n$. The following lemma will be useful.

\begin{lemma} \label{equalone} (Identity 4.2 in \cite{BR2}) Let $\omega$ be a root of $z^{q+1}-1$ other than $1$. Then
	\[ \prod_{\phi \in \Phi_{q^2}^+ \atop \phi=\tilde{\phi}} (1-\omega^{s(\phi)} u^{deg(\phi)})
	\prod_{\{\phi,\tilde{\phi}\} \atop \phi \in \Phi_{q^2}^+, \phi \neq \tilde{\phi}} (1-\omega^{s(\phi)} u^{2deg(\phi)}) = 1.\]
\end{lemma}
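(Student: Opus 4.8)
The plan is to prove Lemma~\ref{equalone} as the unitary analogue of Lemma~\ref{sec}, via a generating-function identity indexed by a monoid of self-conjugate polynomials. Let $\mathcal{M}$ be the commutative monoid, under multiplication, of monic polynomials $g\in\F_{q^2}[z]$ with $g=\tilde g$ and $g(0)\neq 0$. One first checks the routine points: the conjugation $\tilde{\,\cdot\,}$ is multiplicative on monic polynomials with nonzero constant term and is an involution; $g\in\mathcal{M}$ forces $g(0)^{q+1}=1$; and, using unique factorization in $\F_{q^2}[z]$ together with the fact that conjugation permutes the irreducible factors of a self-conjugate polynomial, $\mathcal{M}$ is the free commutative monoid on the self-conjugate monic irreducibles $\phi=\tilde\phi$ together with the ``norms'' $\phi\tilde\phi$ over conjugate pairs $\{\phi,\tilde\phi\}$ with $\phi\neq\tilde\phi$. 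Setting $\zeta^{s(g)}=(-1)^{\deg g}g(0)$ extends $s$ to a monoid homomorphism $\mathcal{M}\to\Zint/(q+1)$ which on the free generators agrees with the quantities $s(\phi)$ in the statement. Expanding each factor as a geometric series, this identifies
\[
\prod_{\phi=\tilde\phi}\bigl(1-\omega^{s(\phi)}u^{\deg\phi}\bigr)^{-1}\prod_{\substack{\{\phi,\tilde\phi\}\\ \phi\neq\tilde\phi}}\bigl(1-\omega^{s(\phi)}u^{2\deg\phi}\bigr)^{-1}=\sum_{g\in\mathcal{M}}\omega^{s(g)}u^{\deg g},
\]
as formal power series (equivalently, for $|u|$ small), so it is enough to prove that the right-hand side equals $1$, i.e.\ that $\sum_{g\in\mathcal{M},\,\deg g=n}\omega^{s(g)}=0$ for every $n\geq 1$.

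The heart of the matter is a counting lemma: for each $c\in\F_{q^2}^{\times}$ with $c^{q+1}=1$, the number of $g\in\mathcal{M}$ of degree $n$ with $g(0)=c$ is $q^{n-1}$, independently of $c$. Writing $g(z)=z^n+a_{n-1}z^{n-1}+\cdots+a_0$, the equation $g=\tilde g$ amounts to $a_0^{q+1}=1$ together with the relations $a_{n-m}=a_0\,a_m^{q}$ for $1\le m\le n-1$; these pair $a_m$ with $a_{n-m}$, so each pair with $m\neq n-m$ is free in $\F_{q^2}$ (contributing $q^2$), while for $n$ even the middle coefficient $x=a_{n/2}$ must satisfy the twisted equation $x=c\,x^{q}$, whose solutions are $x=0$ together with the $q-1$ elements of $\F_{q^2}^{\times}$ with $x^{q-1}=c^{-1}$ (nonempty since $c^{-1}$ lies in the image $\langle\zeta\rangle$ of the $(q-1)$-power map), hence $q$ in total. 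Multiplying over the pairs gives $q^{n-1}$ in every case. As $c$ runs over $\langle\zeta\rangle$, so does $(-1)^n c$, so $s(g)$ hits each residue modulo $q+1$ exactly $q^{n-1}$ times among degree-$n$ members of $\mathcal{M}$.

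The conclusion is then immediate: for $n\geq 1$,
\[
\sum_{\substack{g\in\mathcal{M}\\ \deg g=n}}\omega^{s(g)}=q^{n-1}\sum_{j=0}^{q}\omega^{j}=0,
\]
since $\omega$ is a $(q+1)$-st root of unity different from $1$. Hence $\sum_{g\in\mathcal{M}}\omega^{s(g)}u^{\deg g}=1$ (only $g=1$, with $s(1)=0$, contributes), and taking reciprocals in the displayed product identity gives Lemma~\ref{equalone}.

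I expect the multiplicativity of $\tilde{\,\cdot\,}$ and of $s$, and the free factorization of $\mathcal{M}$, to be entirely routine, exactly paralleling the linear case behind Lemma~\ref{sec}. The one genuinely new point is the counting lemma, and within it the even-degree middle coefficient, where one must solve $x=c\,x^{q}$ by a Hilbert~90 / kernel-of-norm argument; this is the same phenomenon that makes even-degree self-conjugate polynomials carry ``half'' the expected number of free parameters and is why $\tilde M(q;j)$ in Theorem~\ref{ucoset} counts unordered pairs. Since the statement is Identity~4.2 of \cite{BR2}, one may of course just cite Britnell; the above is the self-contained route one would take.
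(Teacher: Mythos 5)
Your proposal is correct, but note that the paper itself offers no proof of this lemma: it is quoted verbatim as Identity 4.2 of Britnell \cite{BR2}, just as Lemma \ref{sec} is quoted from \cite{BR1}. What you supply is therefore a self-contained argument where the paper relies on a citation. Your route is sound: conjugation is multiplicative and an involution, so the monoid $\mathcal{M}$ of monic self-conjugate polynomials with nonzero constant term is indeed freely generated by the self-conjugate irreducibles and the norms $\phi\tilde\phi$ of conjugate pairs, and your extension of $s$ via $\zeta^{s(g)}=(-1)^{\deg g}g(0)$ is a homomorphism agreeing with the paper's $s(\phi)$ on both kinds of generators (for a pair, $(-1)^{2\deg\phi}\phi(0)\tilde\phi(0)=\phi(0)\tilde\phi(0)$), so the Euler-product expansion of the reciprocal of the stated product is exactly $\sum_{g\in\mathcal{M}}\omega^{s(g)}u^{\deg g}$. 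The counting lemma checks out: writing out $\tilde g$ gives $a_{n-m}=a_0^{-q}a_m^{q}=a_0a_m^{q}$ (using $a_0^{q+1}=1$), the two relations in each pair $\{m,n-m\}$ are consistent with one another, and for $n$ even the middle coefficient satisfies $x=cx^{q}$, which has exactly $q$ solutions since the $(q-1)$-power map on $\F_{q^2}^{\times}$ has image $\gen{\zeta}$ and fibers of size $q-1$; this gives $q^{n-1}$ polynomials for each admissible constant term, hence equidistribution of $s$ over $\Zint/(q+1)$ in each degree $n\ge 1$ and the vanishing $q^{n-1}\sum_{j=0}^{q}\omega^{j}=0$ for $\omega\neq 1$. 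Taking reciprocals of formal power series with constant term $1$ is legitimate, so the identity follows. Your argument is the precise unitary analogue of the counting proof one would give for Lemma \ref{sec}, with the genuinely new ingredient being the middle-coefficient count; what it buys over the paper's approach is self-containedness (and it makes transparent why even-degree self-conjugate polynomials have ``half'' the parameters, consistent with the roles of $\tilde N$ and $\tilde M$ in Theorem \ref{ucoset}), at the cost of a page of verification that the citation to \cite{BR2} avoids.
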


In what follows we will need to use the cycle index $Z_{\SU_n(q)}$ of $\SU_n(q)$. For a set of matrices $S$ with entries in $\F_{q^2}$, the cycle index $Z_S$ of $S$ is
the polynomial in variables
\[ \{ x_{\phi,\lambda}: \phi \in \Phi_{q^2}^+, \lambda \in \Lambda \} \] defined by
\[ Z_S = \frac{1}{|S|} \sum_{A \in S} \prod_{\phi \in \Phi_{q^2}^+} x_{\phi,\lambda_A(\phi)}, \] where $\lambda_A(\phi)$ is the partition corresponding
to the irreducible polynomial $\phi$ in the rational canonical form of $A$.

For $\omega \in \Omega_{q+1}$ (the complex roots of $z^{q+1}-1$) we define $K_{U,\omega}(u)$ by

\[ R_{\omega}(u) := \prod_{\phi \in \Phi_{q^2}^+ \atop \phi=\tilde{\phi}} \left( 1 + \sum_{\lambda \in \Lambda} \frac{\omega^{s(\phi)|\lambda|} x_{\phi,\lambda} u^{deg(\phi)|\lambda|}}{C_{U,\phi}(\lambda)} \right) \]

\[ S_{\omega}(u) := \prod_{\{\phi,\tilde{\phi}\} \atop \phi \in \Phi_{q^2}^+,\phi \neq \tilde{\phi}} \left(  1 + \sum_{\lambda \in \Lambda}  \frac{\omega^{s(\phi) |\lambda|} x_{\phi,\lambda} x_{\tilde{\phi},\lambda} u^{2 deg(\phi) |\lambda|}}{C_{U,\phi}(\lambda)} \right) \]

\[ K_{U,\omega} := R_{\omega(u)} S_{\omega(u)}.\] Here $C_{U,\phi}(\lambda)$ are centralizer sizes of certain elements of unitary groups (the exact definition need not concern us), first calculated by Wall \cite{W}.  

When $\omega=1$, $K_{u,1}$ is Fulman's cycle index for the unitary groups.

Britnell's cycle index for the groups $\SU_n(q)$ \cite{BR2} is given by
\[ q+1 + \sum_{n \geq 1} Z_{\SU_n(q)} u^n = \sum_{\omega \in \Omega_{q+1}} K_{U,\omega}(u).\]

We now sketch the proof of Theorem \ref{ucoset}.

\begin{proof} Since we don't need the theorem for the proof of \Cref{t:main}, we write out full details for the case $t=1$ and for the coset $\SU_n(q)$.
	The other cases are quite similar, and note that if the limiting proportion exists for all cosets of $\SU_n(q)$, then it must equal the limiting proportion for
	$\SU_n(q)$  (taking the subsequence with $gcd(n,q+1)=1$ all cosets have the same number of elements we seek; just multiply by a scalar of norm $1$).
	
	In Britnell's cycle index for the special unitary groups, set $x_{\phi,\lambda}$ to equal $0$ if the degree of $\phi$ is equal to $1$ and $|\lambda|>0$, and to equal $1$ otherwise. Then $Z_{\SU_n(q)}$ is the proportion of eigenvalue free elements of $\SU_n(q)$. 
	
	Arguing as in the case of $\SL_n(q)$, one obtains that for all $\omega \in \Omega_{q+1}$,
	\[ K_{U,\omega} = ABCD, \] where
	\[ A:= \prod_{\phi=\tilde{\phi} \atop deg(\phi)=1} \prod_{i \ odd} \left( 1 - \frac{u \omega^{s(\phi)}}{q^i} \right)
	\prod_{i \ even} \left( 1 + \frac{u \omega^{s(\phi)}}{q^i}  \right)  \]
	
	\[ B:= \prod_{\{\phi,\tilde{\phi}\}, \phi \neq \tilde{\phi} \atop deg(\phi)=1} \prod_{i \ odd} \left( 1 - \frac{u^2 \omega^{s(\phi)}}{q^{2i}} \right)
	\prod_{i \ even} \left( 1 - \frac{u^2 \omega^{s(\phi)}}{q^{2i}}  \right)  \]
	
	\[ C:= \prod_{\phi = \tilde{\phi}} \prod_{i \ odd} \left( \frac{1}{1-u^{deg(\phi)} \omega^{s(\phi)}/q^{i deg(\phi)}} \right)
	\prod_{i \ even} \left( \frac{1}{1+u^{deg(\phi)} \omega^{s(\phi)}/q^{i deg(\phi)}} \right) \]
	
	\[ D:= \prod_{\{\phi,\tilde{\phi}\} \atop \phi \neq \tilde{\phi}} \prod_{i \ odd} \left( \frac{1}{1-u^{2deg(\phi)} \omega^{s(\phi)}/q^{2i deg(\phi)}} \right)
	\prod_{i \ even} \left( \frac{1}{1-u^{2deg(\phi)} \omega^{s(\phi)}/q^{2ideg(\phi)}} \right).\]
	
	Note that the products in $A,B,C,D$ are over irreducible polynomials.
	
	Setting $u=u/q^i$ in Lemma \ref{equalone} gives that for $\omega \neq 1$,
	\[ \prod_{i \ odd} \prod_{\phi=\tilde{\phi}} \frac{1}{1-u^{deg(\phi)} \omega^{s(\phi)}/q^{i deg(\phi)}}
	\prod_{\{\phi,\tilde{\phi}\} \atop \phi \neq \tilde{\phi}} \frac{1}{1-u^{2deg(\phi)} \omega^{s(\phi)}/q^{2i deg(\phi)}} = 1.\]
	
	Setting $u=-u/q^i$ in Lemma \ref{equalone} and using the fact that $\phi=\tilde{\phi}$ implies that $deg(\phi)$ is odd,
	gives that for $\omega \neq 1$,
	\[ \prod_{i \ even}  \prod_{\phi=\tilde{\phi}} \frac{1}{1+u^{deg(\phi)} \omega^{s(\phi)}/q^{i deg(\phi)}}
	\prod_{\{\phi,\tilde{\phi}\} \atop \phi \neq \tilde{\phi}} \frac{1}{1-u^{2deg(\phi)} \omega^{s(\phi)}/q^{2i deg(\phi)}} = 1.\]
	
	We conclude that if $\omega \neq 1$, then $CD=1$, so $K_{U,\omega}=AB$. 
	
	If $\omega=1$, then setting all variables in the cycle index of the groups $\GU_n(q)$ gives that $CD= \frac{1}{1-u}$. 
	Now the number of monic irreducible degree 1 polynomials $\phi$ with $\phi=\tilde{\phi}$ is equal to $q+1$, and the number
	of unordered  pairs $\{\phi,\tilde{\phi}\}$ such that $\phi \neq \tilde{\phi}$ is equal to $(q^2-q-2)/2$. Thus
	\[ K_{U,1} = \frac{1}{1-u} \prod_i \left( 1 + \frac{u}{(-q)^i} \right)^{q+1} \prod_i \left( 1-\frac{u^2}{q^{2i}} \right)^{(q^2-q-2)/2}.\]
	
	Summarizing, we have shown that with the above substitutions, \[ q+1 + \sum_{n \geq 1} Z_{\SU_n(q)} u^n \] is equal to
	the sum of 
	\[  \frac{1}{1-u} \prod_i \left( 1 + \frac{u}{(-q)^i} \right)^{q+1} \prod_i \left( 1-\frac{u^2}{q^{2i}} \right)^{(q^2-q-2)/2}, \] and
	
	\[ \sum_{\omega \neq 1} \prod_{\phi=\tilde{\phi} \atop deg(\phi)=1} \prod_i \left(1+u \omega^{s(\phi)}/(-q)^i \right)
	\prod_{\{\phi,\tilde{\phi}\}, \phi \neq \tilde{\phi} \atop deg(\phi)=1} \prod_i \left( 1-u^2 \omega^{s(\phi)}/q^{2i} \right).\]  The result for the coset $\SU_n(q)$ is now straightforward from Lemmas \ref{fir} and \ref{pole}.
\end{proof}

\end{document}